\crefname{hypothesis}{Hypothesis}{Hypotheses}
\newcommand{\lr}[2]{\langle #1, #2 \rangle}
\definecolor{mypink1}{rgb}{0.858, 0.188, 0.478}
\newcommand{\todo}[1]{#1}
\def\Re{\mathbb{R}}
\def\clZ{{\cal Z}}
\def\E{{\sf E}}
\def\Expect{{\sf E}}
\def\P{\text{Prob}}
\newcommand{\trace}{\text{Tr}}
\newcommand{\ud}{\,\mathrm{d}}
\newcommand{\phiepsN}{{\phi}^{(N)}_\epsilon}
\newcommand{\phieps}{{\phi}_{\epsilon}}
\newcommand{\keps}{{k}_{\epsilon}}
\newcommand{\kepsN}{{k}_{\epsilon}^{(N)}}
\newcommand{\K}{{\sf K}}
\newcommand{\Keps}{{\sf K}_\epsilon}
\newcommand{\KepsN}{{\sf K}_\epsilon^{(N)}}
\newcommand{\Teps}{{T}_{\epsilon}}
\newcommand{\TepsN}{{{T}^{(N)}_\epsilon}}
\newcommand{\geps}{{g}_{\epsilon}}
\newcommand{\pr}{\rho}
\newcommand{\neps}{{n}_{\epsilon}}
\newcommand{\Geps}{{G}_\epsilon}
\newcommand{\Ten}{{\sf T}}
\newcommand{\preps}{\pr_\epsilon}
\newcommand{\nepsN}{{n}_\epsilon^{(N)}}
\newcommand{\hvec}{{\sf h}}
\newcommand{\phivec}{{\sf \Phi}}
\newcommand{\qeps}{q_\epsilon}
\newcommand{\CA}{C_1}
\newcommand{\CB}{C_2}
\newcommand{\Ueps}{U_\epsilon}
\newcommand{\nTeps}{T_\epsilon}
\newcommand{\newP}[1]{\medskip  \noindent{\bf #1}}
\newcommand{\hah}{\hat{h}}
\newcommand{\Gdelta}{G^{(\delta)}}
\title{ \bf
	Diffusion map-based algorithm for Gain function approximation in the Feedback Particle Filter\thanks{{Financial support from the NSF CMMI grants 1334987 and 1462773 is gratefully acknowledged.}}}
\author{Amirhossein Taghvaei\thanks{Department of Mechanical Science and Engineering, University of Illinois at Urbana-Champaign, Urbana, IL 
		(\email{taghvae2@illinois.edu}, \email{mehtapg@illinois.edu} 
		).}
	\and Prashant G. Mehta\footnotemark[2]
	\and  Sean P. Meyn\thanks{Department of Electrical and Computer Engineering, University of Florida, Gainesville, FL (\email{spmeyn@gmail.com}) }}
\begin{document}
	\maketitle
\begin{abstract}
\todo{
Feedback particle filter (FPF) is a numerical algorithm to approximate
the solution of the nonlinear filtering problem in continuous-time
settings.  In any numerical implementation of the FPF algorithm, the
main challenge is to numerically approximate the so-called gain
function. A numerical algorithm for gain function approximation is the
subject of this paper.  The exact gain function is the solution of a
Poisson equation involving a probability-weighted Laplacian
$\Delta_\rho$. The numerical problem is to approximate this solution
using {\em only} finitely many particles sampled from the probability
distribution $\rho$.  A diffusion map-based algorithm was proposed by
the authors in a prior work~\cite{Amir_CDC2016,Amir_ACC17} to solve
this problem.  The algorithm is named as such because it involves, as
an intermediate step, a diffusion map approximation of the exact
semigroup $e^{\Delta_\rho}$. The original contribution of this paper
is to carry out a rigorous error analysis of the diffusion map-based
algorithm.  The error is shown to include two components: bias and
variance.  The bias results from the diffusion map approximation of
the exact semigroup.  The variance arises because of finite sample
size.  Scalings and upper bounds are derived for bias and variance. 
These bounds are then illustrated with numerical experiments that serve to
emphasize the effects of problem dimension and sample size.  The
proposed algorithm is applied to two filtering examples and
comparisons provided with the sequential importance resampling (SIR)
particle filter.    
}
\end{abstract}

\begin{keywords}
	Stochastic Processes, Nonlinear filtering, Poisson equation
\end{keywords}

\begin{AMS}
	93E11, 
	65N75, 
	65N15 
\end{AMS}

\section{Introduction}
\label{sec:intro}

This paper is concerned with a numerical solution of a
certain linear partial differential equation (PDE) that arises in nonlinear
filtering problem in continuous-time settings.  

\newP{Nonlinear filtering problem:}
The standard model of the nonlinear filtering problem is given by the following stochastic differential equations (SDE)~\cite{xiong2008}:
\begin{subequations}
	\begin{align}
	\text{State process:}\quad \ud X_t &= a(X_t)\ud t + \ud B_t,\quad X_0 \sim p_0
	\label{eq:Signal_Process}
	\\
	\text{Observation process:}\quad\ud Z_t &= h(X_t)\ud t + \ud W_t,
	\label{eq:Obs_Process}
	\end{align}
\end{subequations}
where $X_t\in\Re^d$ is the (hidden) state at time $t$, $Z_t \in\Re$ is the
observation, and $B_t$, $W_t$ are two mutually independent
standard Wiener processes taking values in $\Re^d$ and $\Re$,
respectively. The mappings $a(\cdot): \Re^d \rightarrow \Re^d$ and
$h(\cdot): \Re^d \rightarrow \Re$ are known $C^1$ functions, and $p_0$
is the density of the prior probability distribution. 

The objective of the filtering problem is to compute the  posterior
distribution of the state $X_t$ given the time history of observations
(filtration) $\clZ_t := \sigma(Z_s:  0\le s \le t)$. 

The problem is {\em linear Gaussian} if $a(\cdot)$, and $h(\cdot)$ are
linear functions and $p_0$ is a Gaussian density. We use $A$ and $H$
to denote the matrices that define these linear functions, i.e,
$a(x)=Ax$ and $h(x)=Hx$. The background on the linear Gaussian
problem, along with its solution given by the Kalman-Bucy
filter~\cite{kalman-bucy}, appears in~\cite{kwakernaak1972linear}.

\newP{Feedback particle filter (FPF)}~is a numerical algorithm to
approximate the posterior distribution in nonlinear non-Gaussian settings~\cite{taoyang_TAC12,yang2016}. \todo{
	The FPF algorithm is an alternative to the sequential
        importance resampling (SIR) particle
	filters~\cite{gordon93,doucet09,bain2009,delmoralbook}.  The
        distinguishing feature of the FPF is that the 
	importance sampling step is replaced with feedback control.  Steps
	such as resampling, reproduction, death or birth of particles are
	altogether avoided.  The particles in FPF have uniform importance
	weights by construction.  Therefore, the FPF does not suffer
        from the particle
	degeneracy issue that is commonly observed in implementations
        of the SIR 
	particle filters~\cite{doucet09}.  
	In independent numerical evaluations and comparisons, it has been
	observed that FPF exhibits smaller simulation variance and better
	scaling properties with the problem dimension~\cite{berntorp2015,stano2014,surace_SIAM_Review}.}

The construction of FPF is based on the following two steps:
\begin{romannum}
	\item[Step 1:] Construct a stochastic process, denoted by $\bar{X}_t \in \Re^d$, whose conditional distribution (given $\clZ_t$) is equal to the conditional distribution of $X_t$; 
	\item[Step 2:] Simulate $N$ stochastic processes, denoted by $\{X^i_t\}_{i=1}^N$, to empirically approximate the distribution of $\bar{X}_t$.
\end{romannum}
\begin{equation*}
\underbrace{\Expect[f(X_t)|\clZ_t]\overset{\text{Step
			1}}{=}\Expect[f(\bar{X}_t)|\clZ_t]}_{\text{exactness condition}}
\overset{\text{Step 2}}{\approx} \frac{1}{N}\sum_{i=1}^N f(X^i_t)\label{eq:exactness}.
\end{equation*}

The process $\bar{X}_t$ is referred to as mean-field process and the
$N$ processes $\{X^i_t\}_{i=1}^N$ are referred to as particles.  The
construction ensures that the filter is {\em exact} in the mean-field
($N=\infty$) limit.    


The details of the two steps are as follows:

\newP{Mean-field process:} In the FPF, the mean-field process $\bar{X}_t$ evolves according to the SDE given by
\begin{equation}
\ud \bar{X}_t = \underbrace{a(\bar{X}_t) \ud t + \ud \bar{B}_t}_{\text{propagation}} + \underbrace{\K_t(\bar{X}_t) \circ (\ud Z_t -
	\frac{h(\bar{X}_t) + \hat{h}_t}{2}\ud t)}_{\text{feedback control law}},
\quad \bar{X}_0\sim p_0,
\label{eq:FPF-mean-field}
\end{equation}  
where $\bar{B}_t$ is a standard Wiener processes independent of $\bar{X}_0$ and
$\hat{h}_t := \E[h(\bar{X}_t)|\mathcal{Z}_t]$. 
The $\circ$ indicates that the sde is expressed in its Stratonovich form. 
The gain function is $\K_t(x):= \nabla \phi_t(x)$ where $\phi_t$ is the solution of the Poisson
equation:
\begin{equation}
\label{eq:Poisson-intro}
\text{Poisson equation:}\quad \frac{1}{p_t(x)}\nabla \cdot (p_t(x)
\nabla \phi_t(x) ) = -(h(x)-\hat{h}_t),\quad\forall \;x\in\Re^d,
\end{equation}
where $\nabla$ and $\nabla \cdot $ denote the
gradient and the divergence operators, respectively, and
$p_t$ denotes the conditional density of $\bar{X}_t$ given
$\mathcal{Z}_t$.  The operator on the left-hand side of the Poisson
equation~\eqref{eq:Poisson-intro} is referred to as the {\em
  probability-weighted Laplacian}.  It is denoted as $\Delta_{\rho}$ where
the probability density $\rho$ is the conditional density $p_t$.  

\newP{Particles:} The particles $\{X^i_t\}_{i=1}^N$ evolve according to:
\begin{equation}
\ud X^i_t = a(X^i_t) \ud t + \ud B^i_t +{\K^{(N)}_t(X^i_t) \circ (\ud Z_t -
	\frac{h(X^i_t) + \hat{h}^{(N)}_t}{2}\ud t)},\quad X^i_0\overset{\text{i.i.d}}{\sim} p_0,
\label{eq:FPF-finite-N}
\end{equation}  
for $i=1,\ldots N$, where $\{B^i_t\}_{i=1}^N$ are mutually independent Wiener processes, 
$\hat{h}^{(N)}_t:=\frac{1}{N}\sum_{i=1}^N h(X^i_t)$, and $\K^{(N)}_t$
is the output of an algorithm that approximates the solution to the Poisson equation~\cref{eq:Poisson-intro} 
\begin{equation}
\text{Gain function approximation:}\quad\K^{(N)}_t := \text{Algorithm}(\{X^i_t\}_{i=1}^N;h).
\label{eq:gain-func-approx} 
\end{equation}
The notation is suggestive of the fact that algorithm is adapted
to the ensemble $\{X^i_t\}_{i=1}^N$ and the function $h$; the density
$p_t(x)$ is not known in an explicit manner.  

Development and error analysis of one such gain function approximation
algorithm is the subject of the present paper.  Before describing the general case, it is
useful to review the filter for the linear Gaussian case where the
solution of the Poisson equation is explicitly known.  

\newP{FPF for Linear Gaussian setting:} Suppose $h(x)=H x$ and $p_t$
is a Gaussian density with mean $\bar{m}_t$ and variance $\bar{\Sigma}_t$.  Then the solution of
the Poisson equation is known in an explicit form~\cite[Sec. D]{yang2016}.  The resulting gain
function is constant and equal to the Kalman gain:
\begin{equation}\label{eq:Kalman-gain}
\K_t(x) \equiv \bar{\Sigma}_tH^\top ,\quad \forall \; x\in \Re^d.
\end{equation}
Therefore, the mean-field process~\cref{eq:FPF-mean-field} for the
linear Gaussian problem is given by:
\begin{equation*}\label{eq:FPF-mean-field-linear}
\ud \bar{X}_t = A \bar{X}_t \ud t + \ud \bar{B}_t + \bar{\Sigma}_tH^\top  (\ud Z_t - \frac{H\bar{X}_t+H\bar{m}_t}{2}\ud t),\quad \bar{X}_0 \sim p_0.
\end{equation*}  

Given the explicit form of the gain function~\cref{eq:Kalman-gain}, the empirical approximation of the gain is simply $\K_t^{(N)}=\Sigma_t^{(N)}H^\top$ where $\Sigma_t^{(N)}$ is the empirical covariance of the particles. Therefore, the evolution of the particles is:
\begin{equation}
\ud X^i_t = AX^i_t \ud t + \ud B^i_t +{\K^{(N)}_t  (\ud Z_t -
	\frac{HX^i_t + H m^{(N)}_t}{2}\ud t)},\quad X^i_0\overset{\text{i.i.d}}{\sim} p_0,\quad\label{eq:FPF-linear}
\end{equation}
for $i=1.\ldots,N$, where $m_t^{(N)}$ is the empirical mean of the particles. The empirical quantities are computed as:
\begin{align*}
m^{(N)}_t&:=\frac{1}{N}\sum_{j=1}^N X^i_t,\quad\Sigma^{(N)}_t
:=\frac{1}{N-1}\sum_{j=1}^N (X^i_t-m^{(N)}_t)(X^i_t-m^{(N)}_t)^\top.
\end{align*}
The linear Gaussian FPF~\cref{eq:FPF-linear} is identical to the
square-root form of the ensemble
Kalman filter (EnKF)~\cite[Eq. 3.3]{Reich-ensemble}.  

\medskip

One extension of the Kalman gain is the so called {\em constant gain
	approximation} formula whereby the gain $\K_t$ is approximated by
its expected value (which represents the best 
least-squared approximation of the gain by a constant). Remarkably, the
expected value admits a closed-form expression 
which is then readily approximated empirically using the particles (see~\cref{rem:constant-gain} for derivation):
\begin{equation}
\begin{aligned}
\text{Const. gain approx:}\quad 
\Expect [\K_t (X_t)|\clZ_t] &= \int_{\Re^d} (h(x)-\hat{h}_t)\;
x \; p_t(x) \ud x \\&\approx \frac{1}{N}\sum_{i=1}^N\; (h(X^i_t)-\hat{h}^{(N)}_t) \; X^i_t.
\end{aligned}\label{eq:const-gain-approx}
\end{equation}
The constant gain approximation formula has been used in nonlinear
extensions of the EnKF algorithm~\cite{jana2016stability}.  The connection to the
Poisson equation provides a justification for this formula.  The
formula is attractive because it provides a consistent (as the number of particles
$N\rightarrow\infty$) approximation
of the Kalman gain in the linear Gaussian setting.

\medskip

Design and analysis of the gain function approximation
algorithm~\eqref{eq:gain-func-approx} in the general case is a
challenging problem because of two reasons:  (i) Apart from the
Gaussian case, there are no known closed-form solutions
of~\cref{eq:Poisson}; (ii) The density $p_t(x)$ is not explicitly
known.  At each time-step, one only has samples $\{X^i_t\}_{i=1}^N$. For the
purpose of this paper, these samples are assumed to be i.i.d drawn
from $p_t$.  The assumption is justified because in the limit of large
$N$, the particles are approximately i.i.d (by the propagation of chaos);
cf.,~\cite{sznitman1991}.

\subsection{Contributions of this paper}
\todo{
	The paper presents a diffusion map-based algorithm for the gain
	function approximation problem.  The algorithm is named as such because it
                involves, as an intermediate step, a diffusion map
                approximation of the exact semigroup $e^{\Delta_\rho}$.  
	The following is a summary of specific original contributions made in
	this paper:
	\begin{enumerate}
		\item[(i)]  Error estimates that relate the exact
                  semigroup to its diffusion map approximation.  The
                  error estimates are derived by employing a Feynman-Kac
                  representation of the semigroup (\cref{prop:Tepsn-convergence}); 
		\item[(ii)] A uniform spectral gap for the diffusion
                  map based on the use of the 
		Foster-Lyapunov function method from the theory of
                stochastic stability of
		Markov processes (\cref{prop:DV3}); and 
		\item[(iii)] Error estimates for the empirical approximation of the diffusion
		map (\cref{prop:TepsN-convergence}). 
	\end{enumerate}
	The results from (i) and (ii) are used to derive estimates for
        the bias and to show that the bias converges to zero in a
        certain limit (\cref{thm:bias}).  Results from (iii) are
	used to prove the convergence of the variance error term to
	zero in the infinite-$N$ limit (\cref{thm:variance}).  
The paper contains numerical experiments
		that serve to illustrate the effects of problem dimension and sample
		size.  The algorithm is applied to two filtering examples and
		comparisons provided with the sequential importance
                resampling (SIR) particle filter.



	\subsection{Relationship to prior work}  The gain function algorithm
	first appeared in the conference version of this
	paper~\cite{Amir_CDC2016}.  Its preliminary error analysis was
	reported in the conference paper~\cite{Amir_ACC17}.  The important
	distinction is that the results in these conference papers
        were preliminary in nature.  The proofs were either altogether
        omitted or based on formal arguments.  The main techniques employed in this
	paper, namely, (i) the use of Feyman-Kac representation to quantify the
	error due to the diffusion map approximation of the exact semigroup,
	and (ii) the use of stochastic stability theory to derive
        uniform spectral gap for the diffusion map, are
	original and do not appear in the conference papers.
	These techniques are important to be able to obtain precise estimates
	as enumerated above in the list of contributions.  Since the main
	technical tools are new, {\em all} the proofs, based on these techniques,
	are new and original contributions of this paper.  The diffusion map
	was introduced in~\cite{coifman}, in the context of spectral
	clustering~\cite{belkin,luxburg2007}.  Results on its convergence analysis
	appears
	in~\cite{hein-consistency-2005,singer2006graph,coifman,gine2006empirical,hein2006,von2008consistency,belkin2007convergence}.
	The use of diffusion map approximations for
	filtering problems is originally due to the authors.

}
%

\subsection{Literature survey}

Apart from its direct relevance to numerical approximation of the FPF,
there are three topics of current research interest that are relevant
to the subject of this paper: (i) ensemble Kalman filter; (ii) particle flow algorithms for nonlinear
filtering; and (iii) optimal
transport.  Specifically, the algorithms for gain function
approximation described
in this paper are also directly applicable to these other topics.  These relationships are briefly discussed next:

\newP{Ensemble Kalman filter:} The EnKF algorithm was first developed
in the discrete-time setting~\cite{evensen1994sequential}.  In
the continuous-time setting, two formulations of the EnKF have been
developed: stochastic EnKF, and the more recent deterministic
EnKF~\cite{Reich-ensemble,jdw:ReichCotter2015}. As has already been
noted, the
deterministic EnKF is in fact identical to the FPF algorithm~\cref{eq:FPF-linear}
in the linear Gaussian setting~\cite{Reich-ensemble,TaghvaeiASME2017}.

The EnKF algorithm provides a consistent approximation in the linear
Gaussian setting. Compared to the Kalman filter, the main utility of
EnKF is that it does not require propagation of the covariance matrix. This
reduces the computational complexity from $O(d^2)$ for the Kalman filter
to $O(Nd)$. This is clearly advantageous in high dimensional problems
when $N<<d$.  This
property has made EnKF popular in applications such as weather
prediction in high dimensional
settings~\cite{sr:kalnay,sr:Oliver2008}. The disadvantage of the EnKF
algorithm, of course, is that it does not provide a consistent approximation for nonlinear
problems. 

FPF represents a the generalization of the EnKF to the nonlinear
non-Gaussian setting~\cite{TaghvaeiASME2017}: With the constant gain approximation, the
algorithms are identical.  Given this parallel, { the problem of
	improving the EnKF algorithm in more general nonlinear non-Gaussian
	settings is directly related to the problem of better approximating
	the gain function in the FPF}.  In an application software based on
EnKF, it is a relatively simple matter to replace the constant gain
formula for the gain by more sophisticated approximations described in
this paper.  Certain empirical evaluations on the performance of FPF
in high-dimensional settings are reported
in~\cite{surace_SIAM_Review,stano2014,stano2013nonlinear,berntorp2015}.


Error analysis and stability of EnKF is an active
area of research;
see~\cite{gland2009,mandel2015,delmoral2016stability} for linear
models
and~\cite{jana2016stability,delmoral2017stability,stuart2014stability}
for nonlinear models.  The error analysis for the gain function
approximation reported in this paper is a step towards error analysis
of the FPF along these lines.     

\newP{Particle flow algorithms:} The following first-order (and hence
an under determined) form of the
Poisson equation appears in most types of particle flow algorithms:
\[
\nabla \cdot (p_t(x) \K(x)) = \text{(rhs)},
\]
where the righthand-side (rhs) is given and $\K(x)$ defines a vector
field that must be obtained to implement the particle flow.  The PDE
appears in the first interacting particle representation of the
continuous-time filtering in~\cite{crisan2007,crisan10} and the
discrete-time filtering in~\cite{daum10}.  
Stochastic extensions of these have also
recently appeared in~\cite{daum2017generalized} where approximate solutions
are also described based on Gaussian assumption on the density.  
The algorithm described here represent an approximation of a
particular gradient form solution of the first-order PDE.

\newP{Optimal transport:} The mean-field SDE~\cref{eq:FPF-mean-field}
represents a transport that maps the prior distribution at time
$0$ to the posterior distribution at an (arbitrary) future time $t>0$.
Synthesis of optimal transport maps for implementing the Bayes formula
appears
in~\cite{reich11,reich13,MarzoukBayesian,AmirACC2016,heng2015gibbs,chen2016Linear}.  
The relationship with the Poisson equation is through the ensemble
transform filter which relies on a linear programming construction to
approximate the optimal transport map~\cite{reich13}.  As discussed
in~\cite[Sec.~5.5]{TaghvaeiASME2017}, the solution of the Poisson equation
yields an infinitesimal optimal transport map from the ``prior''
$p_t(x)$ to ``posterior'' $\frac{1}{\gamma}p_t(x)e^{-th(x)}$.  
Another closely related approach 
is transportation through Gibbs flow~\cite{heng2015gibbs}.


\medskip

Directly related to the FPF, the Galerkin method for the numerical
solution of the Poisson equation appeared in original
papers~\cite{yang2016,taoyang_TAC12}.  The Galerkin algorithm
represents the `direct'' PDE approach to construct a
numerical approximation.  The constant gain approximation is a
particular example of a Galerkin solution.  In general, the main problem with the
Galerkin approximation is that it requires a selection of basis
functions.  This becomes intractable in high dimensions.  To mitigate
this issue, a proper
orthogonal decomposition (POD)-based procedure to select basis
functions is introduced in~\cite{berntorp2016}. Other existing approaches  are a
continuation scheme for approximation~\cite{matsuura2016suboptimal}, a probabilistic approach
based on dynamic programming~\cite{Sean_CDC2016}, and a procedure based on expressing the gain function in a reproducible Hilbert kernel space~\cite{radhakrishnan2018feedback}. A comparison of different gain function approximation methods  appears in~\cite{berntorp2018comparison}. 

\medskip
\subsection{Paper outline}
The outline of the remainder of this paper is as follows: The
mathematical problem of the gain function approximation together with
a summary of known results on this topic appears in~\cref{sec:prelim}.  
The diffusion-map based algorithm is described in a self-contained fashion in
\cref{sec:kernel}.  
The main theoretical results of this paper including the bias and variance
estimates appear in \cref{sec:error_kernel}.  Some numerical experiments for
the same appear in \cref{sec:numerics}.  
All the proofs
appear in the Appendix.    

\subsection{Notation} 
For vectors $x,y\in\Re^d$, the dot product is denoted as $x\cdot y$ and
$|x|:=\sqrt{x\cdot x}$. The space of positive definite $d \times d$ matrices is denoted as $S^d_{++}$. The Borel $\sigma$-algebra on $\Re^d$ is denoted by $\mathcal{B}(\Re^d)$. The indicator function, for a measurable set $A \in \mathbb{B}(\Re^d)$, is denoted as $\mathds{1}_A(\cdot)$. 
The space of measurable functions $f:\Re^d \to \Re$ such that $\|f\|_{L^p(\pr)}:=\left(\int |f(x)|^p \pr(x)\ud x\right)^{1/p} <\infty$ is denoted as $L^p(\pr)$.  The inner product on  $L^2(\pr)$  is defined by $\big
<f,g\big>:=\int f(x)g(x) \pr(x)\ud x$. 
The space  
$H^1(\pr)$ is the
space functions $f\in L^2(\pr)$ whose derivative (defined in the
weak sense) is in $L^2(\pr)$.  For a (weakly) differentiable function $f$, $\|\nabla f\|_{L^p(\pr)}:=\left(\int |\nabla f(x)|^p\pr(x)\ud x\right)^{1/p}$. For an integrable  function $f$, $\hat{f}_\pr:=\int f(x) \rho(x) \ud x$ denotes the
mean.  $L^2_0(\pr) :=\{f \in L^2(\pr) \mid\hat{f}_\pr =0\}$
and $H_0^1(\pr):=\{f \in H^1(\pr) \mid \hat{f}_\pr =0\}$ denote the co-dimension $1$ subspace of functions whose
mean is zero.  $L^\infty(\Omega)$ denotes the
space of bounded functions on $\Omega \subset \Re^d$ with the sup-norm denoted as
$\|\cdot\|_{L^{\infty}(\Omega)}$. The space of continuous and bounded functions on $\Omega \subset \Re^d$ and the space of continuous and smooth functions on $\Omega$ is denoted as $C_b(\Omega)$ and $C^\infty_b(\Omega)$ respectively. For a linear operator $T$, on a Banach space $\mathcal{X}$ with norm $\|\cdot\|_\mathcal{X}$, the operator norm is denoted as $\|T\|_\mathcal{X}$.  The Gaussian distribution with mean $m$ and covariance $\Sigma$ is denoted as $\mathcal{N}(m,\Sigma)$. The variance of the random variable $X$ is denoted as $\text{Var}(X)$. 
\section{Gain function approximation}
\label{sec:prelim}
\subsection{Problem formulation}\label{sec:problem-formulation}
The mathematical problem is to numerically approximate the solution
of the Poisson's equation~\cref{eq:Poisson-intro} introduced in
\cref{sec:intro} and also repeated below:
\begin{equation}
- \Delta_\rho \phi = h -\hat{h}_\pr,
\label{eq:Poisson}
\end{equation}  
where the weighted Laplacian $\Delta_\pr \phi(x):=\frac{1}{\pr(x)}\nabla \cdot(\pr(x)\nabla \phi(x))$; $\rho(x)$ is
an everywhere positive probability density on $\Re^d$; $h(x)$ is a real-valued
function defined on $\Re^d$ and $\hat{h}_\pr:=\int h(x) \rho(x) \ud x$.
The function $\phi$ is referred to as the solution.  Its gradient is referred to as the gain
  function and denoted as $\K(x) := \nabla\phi(x)$.  The
  PDE~\cref{eq:Poisson} is referred to as the Poisson's equation.

The numerical approximation problem is as follows:

\newP{Problem statement:} Given $N$ 
samples $\{X^1,\hdots,X^i,\hdots,X^N\}$, drawn i.i.d. from $\rho$, approximate
the gains $\{\K^1,\hdots,\K^i,\hdots,\K^N\}$, where
$\K^i:=\K(X^i)=\nabla\phi(X^i)$.   
The density $\rho$ is not known in an explicit form.

\subsection{Mathematical preliminaries}
\newP{Assumptions:} The following assumptions are made throughout the paper:

\begin{romannum}
	\item {\bf Assumption A1:} The probability density $\rho$ is of the
	form $\rho(x)=e^{-V(x)}$ where the function $V(x)=\frac{1}{2}(x-m)^\top\Sigma^{-1}(x-m) + w(x)$ for some $m \in \Re^d$, $\Sigma \in S^d_{++}$, and  $w \in C^\infty_b(\Re^d)$; 
	\item {\bf Assumption A2:} The function $h:\Re^d \to \Re$ is (weakly) differentiable with $\|h\|_{L^4(\pr)},\|\nabla h\|_{L^4(\pr)}<\infty$. 
\end{romannum}

\todo{
\begin{remark}\label{rem:assumption}
	 Assumption A1 is used to prove the approximation
         result (\cref{prop:Tepsn-convergence}) and to derive the spectral
         gap (\cref{prop:DV3}) for the diffusion map approximation
         first introduced in 
         \cref{sec:kernel}. In prior literature, a similar assumption has been previously
         used for studying functional inequalities to obtain
         Poincar\'e inequality with a constant that does not depend on
         the dimension~\cite[Ch. 8]{villani2003}. Assumption A1 is
         restrictive, e.g., a mixture of Gaussians does not satisfy
         the assumption.  Based on numerical experiments, it is
         conjectured that Assumption A1 can be relaxed.  A weaker
         assumption would be to assume $\pr = \pr_g * w $, the
         convolution of a Gaussian density $\pr_g$  with a density $w$
         that has a compact support. Proving the theoretical results
         under this weaker assumption is the subject of future work.  
\end{remark}
}

\subsubsection{Spectral representation} Under Assumption (A1), the weighted Laplacian $\Delta_\pr$ has a
discrete spectrum with an ordered sequence of eigenvalues
$0=\lambda_0<\lambda_1\le\lambda_2\le\hdots$ and associated
eigenfunctions $\{e_n\}$ that form a complete orthonormal basis of
$L^2(\pr)$~\cite[Cor. 4.10.9]{bakry2013}.
The trivial eigenfunction $e_0(x) = 1$, and for $f \in L^2_0(\pr)$, the spectral
representation yields:
\begin{equation}
\label{eq:spectral_rep}
-\Delta_\rho f= \sum_{m=1}^\infty \lambda_m \langle e_m,f \rangle e_m.
\end{equation}

The positivity of the smallest non-trivial eigenvalue ($\lambda_1>0$) is
referred to as the Poincar\'e inequality (or the spectral gap
condition)~\cite{bakry08}.  The inequality is equivalently expressed as
\begin{equation*}
\int_{\Re^d} (f-\hat{f}_\pr)^2 \rho \ud x \leq
\frac{1}{\lambda_1}\int_{\Re^d} |\nabla f|^2 \rho \ud x, \quad \quad \forall\; f
\in H^1(\rho),
\end{equation*} 
where $\hat{f}_\pr=\int f  \pr \ud x$. 

The Poincar\'e inequality is important to show that the Poisson
equation is well-posed and a unique solution exists. The solution to
the Poisson equation is defined using the weak formulation.  

\subsubsection{Weak formulation} A function $\phi \in H_0^1(\rho)$ is
said to be a weak solution of~\cref{eq:Poisson} if
\begin{equation}
\int \nabla \phi (x) \cdot \nabla \psi(x) \rho(x) \ud x = \int  (h(x)-\hat{h}_\pr) \psi(x) \rho(x) \ud x \quad \quad \forall\; \psi
\in H^1(\rho).\label{eq:Poisson-weak}
\end{equation}
Equation~\cref{eq:Poisson-weak} is referred to as the weak-form of
the Poisson's equation. 
The weak-form is expressed
succinctly as $\langle\nabla\phi,\nabla\psi\rangle=\langle
h-\hat{h}_\pr,\psi\rangle$ where $\langle \cdot,\cdot \rangle$ is 
the inner-product in $L^2(\rho)$.
The existence and uniqueness of the solution to the weak-form of the Poisson equation is stated in the following Proposition.
\begin{proposition}
	{\cite[Thm. 2.2.]{laugesen15}}
	\label{prop:existence-weak-form}
Suppose $\rho$ satisfies Assumption~(A1) and $h$ satisfies
Assumption~(A2).  Then there exists a unique function $\phi \in H_0^1(\pr)$ that satisfies the weak-form of the Poisson equation~\cref{eq:Poisson-weak}. The solution satisfies the bound:
\begin{equation*}
\int |\nabla \phi(x)|^2 \pr(x)\ud x \leq \frac{1}{\lambda_1}\int (h(x)-\hat{h}_\pr)^2\pr(x)\ud x.
\end{equation*}	
\end{proposition}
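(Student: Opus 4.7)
The plan is a standard variational argument based on the Lax--Milgram theorem. I would work on the Hilbert space $H_0^1(\pr)$ (equipped with the inner product inherited from $H^1(\pr)$), introduce the symmetric bilinear form $a(\phi,\psi):=\langle\nabla\phi,\nabla\psi\rangle$ and the linear functional $\ell(\psi):=\langle h-\hat{h}_\pr,\psi\rangle$, and show that these satisfy the hypotheses of Lax--Milgram on $H_0^1(\pr)$. Uniqueness in $H_0^1(\pr)$ then follows from coercivity, and extension of the weak identity \cref{eq:Poisson-weak} from $\psi\in H_0^1(\pr)$ to arbitrary $\psi\in H^1(\pr)$ is immediate because replacing $\psi$ by $\psi-\hat{\psi}_\pr$ leaves both sides unchanged (the right side because $h-\hat{h}_\pr$ has zero mean, the left side because constants have vanishing gradient).

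The first step is to invoke Assumption (A1), which guarantees (via the spectral gap $\lambda_1>0$ recorded in \cref{eq:spectral_rep}) the Poincar\'e inequality
\begin{equation*}
\int f^2\,\pr\,\ud x \;\le\; \frac{1}{\lambda_1}\int|\nabla f|^2\,\pr\,\ud x, \qquad \forall\,f\in H_0^1(\pr).
\end{equation*}
Continuity of $a$ and $\ell$ on $H^1(\pr)$ is a direct application of the Cauchy--Schwarz inequality in $L^2(\pr)$, where for $\ell$ one also uses that $\|h-\hat h_\pr\|_{L^2(\pr)}<\infty$ by Assumption (A2). Coercivity is where Poincar\'e enters: for $\phi\in H_0^1(\pr)$,
\begin{equation*}
\|\phi\|_{H^1(\pr)}^2 \;=\; \int\phi^2\,\pr\,\ud x \,+\, \int|\nabla\phi|^2\,\pr\,\ud x \;\le\; \Bigl(1+\tfrac{1}{\lambda_1}\Bigr)\,a(\phi,\phi),
\end{equation*}
so $a$ is coercive on $H_0^1(\pr)$ with constant $\lambda_1/(1+\lambda_1)$. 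Lax--Milgram then furnishes a unique $\phi\in H_0^1(\pr)$ with $a(\phi,\psi)=\ell(\psi)$ for all $\psi\in H_0^1(\pr)$, and the extension argument above promotes this to the stated weak formulation.

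For the energy bound, I would test the weak form against the solution itself, setting $\psi=\phi$. This gives
\begin{equation*}
\int|\nabla\phi|^2\,\pr\,\ud x \;=\; \int(h-\hat h_\pr)\,\phi\,\pr\,\ud x \;\le\; \|h-\hat h_\pr\|_{L^2(\pr)}\,\|\phi\|_{L^2(\pr)},
\end{equation*}
by Cauchy--Schwarz. Applying the Poincar\'e inequality to $\|\phi\|_{L^2(\pr)}$ (valid since $\phi\in H_0^1(\pr)$) and then dividing by $\|\nabla\phi\|_{L^2(\pr)}$ and squaring produces the claimed inequality $\int|\nabla\phi|^2\pr\,\ud x\le\lambda_1^{-1}\int(h-\hat h_\pr)^2\pr\,\ud x$.

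There is no serious obstacle here: the argument is essentially a textbook application of Lax--Milgram in a weighted Sobolev setting. The only subtlety worth flagging is the reduction of test functions from $H^1(\pr)$ to the mean-zero subspace $H_0^1(\pr)$, which relies on the compatibility condition $\widehat{(h-\hat h_\pr)}_\pr=0$ built into the formulation; this is what makes the problem well-posed despite the fact that $a$ is not coercive on the full space $H^1(\pr)$ (constants sit in its kernel).
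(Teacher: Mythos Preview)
Your argument is correct. The paper does not supply its own proof of this proposition; it simply attributes the result to \cite[Thm.~2.2]{laugesen15}, so there is no in-paper proof to compare against. Your Lax--Milgram argument on $H_0^1(\pr)$, with coercivity coming from the Poincar\'e inequality and the energy bound obtained by testing against $\phi$ itself, is exactly the standard route for this type of weighted Poisson equation and is almost certainly what the cited reference does as well.
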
   

\todo{
\begin{remark}
[Constant gain approximation]
The weak formulation~\cref{eq:Poisson-weak} has led to the Galerkin
algorithm presented in the original FPF papers~\cite{yang2016}. A
special case of the Galerkin solution is the constant gain
approximation formula~\cref{eq:const-gain-approx}.
The formula is obtained upon choosing the test functions
in~\cref{eq:Poisson-weak} to be the coordinate functions: $\psi_m(x) =
x_m$ for $m=1,2,\ldots,d$. Then,
\begin{equation*}
\int \frac{\partial \phi}{\partial x_m}(x) \pr (x) \ud x  = \int (h(x)-\hat{h}_\pr)x_m \pr(x)\ud x,\quad \text{for}\quad m=1,\ldots,d,
\end{equation*}
which yields the formula~\cref{eq:const-gain-approx}. 
\label{rem:constant-gain}
\end{remark}
}
\medskip

The diffusion map-based algorithm presented in this paper is based on the semigroup
formulation of the Poisson equation.
  
\subsubsection{Semigroup}\label{sec:semigroup} Let $\{P_t\}_{t\geq 0}$ be the semigroup associated
with the weighted Laplacian $\Delta_\pr$.  The semigroup allows for a
probabilistic interpretation which is described next.  Consider the
following 
reversible Markov process $\{S_t\}_{t\geq 0}$ evolving in $\Re^d$:
\[
\ud S_t =  -\nabla V(S_t) \ud t  + \sqrt{2}\ud B_t,
\]
where $V(x) := -\log(\pr(x))$ and $\{B_t\}_{t\geq 0}$ is a standard Weiner
process in $\Re^d$.  Then
\[
P_tf(x)= \Expect[f(S_t)|S_0=x] .
\]
It is straightforward to verify that $P_t:L^2(\pr)\to L^2(\pr)$ is
symmetric, i.e., $\lr{P_tf}{g} =\lr{f}{P_t g}$ for all $f,g \in
L^2(\pr)$ and $\pr(x) = e^{-V(x)}$ is its invariant density.  
The semigroup also admits a kernel representation:
\[
P_tf(x) = \sum_{m=1}^\infty e^{-t\lambda_m}\lr{e_m}{f}e_m(x) =
\int_{\Re^d} \bar{k}_t(x,y)f(y)\pr(y)\ud y,
\] 
where $\bar{k}_t(x,y):=\sum_{m=0}^\infty e^{-t\lambda_m} e_m(x) e_m(y)$.  

The spectral gap implies that
$\|P_t\|_{L^2_0(\pr)}=e^{-t\lambda_1}<1$. Hence,
$P_t$ is a strict contraction on $L^2_0(\pr)$.  
For the
special case of Gaussian density, the
eigenfunctions are given by the Hermite polynomials.  This leads to an
explicit formula for the kernel $\bar{k}_t(x,y)$ in the Gaussian case, as
described in~\cref{apdx:Laplacian-Gaussian}. 


Consider the heat equation 
\begin{equation*}
\frac{\partial u}{\partial t} = \Delta_\pr u + (h-\hat{h}_\pr),\quad u(0,x)=f(x).
\end{equation*} 
Its solution is given in terms of the semigroup as follows:
\begin{equation*}
u(t,x) = P_t f(x) + \int_0^t P_{t-s}(h-\hat{h}_\pr)(x)\ud s.
\end{equation*}
Letting $f(x)=\phi(x)$ where $\phi$ solves the Poisson equation~\cref{eq:Poisson} yields the following 
fixed-point equation for $t=\epsilon$:
\begin{align}
\text{(exact fixed-point equation)}\quad\phi = P_\epsilon \phi+
\int_0^\epsilon P_s (h-\hat{h}_\pr) \ud s. \label{eq:Poisson-semigroup} 
\end{align}
Equation~\cref{eq:Poisson-semigroup} is referred to as the semigroup
form of the Poisson equation~\cref{eq:Poisson}.

The following Proposition shows that the  weak
form~\cref{eq:Poisson-weak} and the semigroup
form~\cref{eq:Poisson-semigroup} are equivalent. The proof appears in
the~\cref{apdx:semigoup-equivalent}.  
\begin{proposition}\label{prop:semigroup-equivalent}
Suppose $\rho$ satisfies Assumption~(A1) and $h$ satisfies
Assumption~(A2).   Then the unique solution $\phi \in H_0^1(\pr)$ to the weak form~\cref{eq:Poisson-weak} is also the unique solution to the fixed-point equation~\cref{eq:Poisson-semigroup}.
\end{proposition}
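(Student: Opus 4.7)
The plan is to verify two implications: (a) every weak solution of~\cref{eq:Poisson-weak} satisfies the semigroup form~\cref{eq:Poisson-semigroup}, and (b) the semigroup form admits at most one solution in $H^1_0(\pr)$. Combined with \cref{prop:existence-weak-form}, which already supplies a unique weak solution, this immediately yields the claimed equivalence.

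For step (a), I would exploit the spectral decomposition of $\Delta_\pr$ guaranteed by Assumption (A1) and recorded in~\cref{eq:spectral_rep}. Since $h \in L^2(\pr)$ by Assumption (A2), expand $h - \hat{h}_\pr = \sum_{m \geq 1} b_m e_m$ with $b_m = \langle h, e_m \rangle$, and expand the weak solution $\phi = \sum_{m \geq 1} c_m e_m$ in $L^2(\pr)$. Testing the weak form~\cref{eq:Poisson-weak} against $\psi = e_n$ and using the eigenrelation $\langle \nabla e_n, \nabla \psi \rangle = \lambda_n \langle e_n, \psi\rangle$ (which holds for $\psi \in H^1(\pr)$) produces $\lambda_n c_n = b_n$, hence $c_n = b_n/\lambda_n$. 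Applying $P_\epsilon$ gives $P_\epsilon \phi = \sum_{m \geq 1} e^{-\epsilon \lambda_m} c_m e_m$, and the Duhamel-type integral evaluates termwise to
\begin{equation*}
\int_0^\epsilon P_s (h - \hat{h}_\pr) \,\ud s = \sum_{m \geq 1} \frac{1-e^{-\epsilon \lambda_m}}{\lambda_m} b_m\, e_m = \sum_{m \geq 1} (1-e^{-\epsilon \lambda_m})\, c_m\, e_m .
\end{equation*}
Summing this with $P_\epsilon \phi$ reproduces $\sum_{m\geq 1} c_m e_m = \phi$ in $L^2(\pr)$, which is exactly~\cref{eq:Poisson-semigroup}.

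For step (b), suppose $\phi_1, \phi_2 \in H^1_0(\pr)$ both satisfy~\cref{eq:Poisson-semigroup}. Their difference $\tilde\phi := \phi_1 - \phi_2 \in L^2_0(\pr)$ then satisfies $\tilde\phi = P_\epsilon \tilde\phi$. But \cref{sec:semigroup} records $\|P_\epsilon\|_{L^2_0(\pr)} = e^{-\epsilon \lambda_1} < 1$, so $P_\epsilon$ has no nonzero fixed point in $L^2_0(\pr)$; therefore $\tilde\phi = 0$.

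The principal technical point is justifying the termwise manipulations in step (a)---the absolute convergence of the series for $P_\epsilon \phi$ in $L^2(\pr)$, the interchange of the time integral with the infinite eigenfunction sum, and the convergence of the resulting series in $L^2(\pr)$. These follow from $\phi \in H^1_0(\pr)$ via Parseval's identity $\|\nabla \phi\|_{L^2(\pr)}^2 = \sum_{m \geq 1} \lambda_m c_m^2 < \infty$, the uniform bound $|(1-e^{-\epsilon\lambda_m})/\lambda_m| \leq \epsilon$ on $[0,\infty)$, and the $L^2_0(\pr)$-contractivity of $P_s$ for each $s \in [0,\epsilon]$, which permits the Fubini-type interchange on a mode-by-mode basis. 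Once these routine bounds are in place, the rest of the argument is bookkeeping on the eigenexpansion; I do not anticipate a genuine obstacle.
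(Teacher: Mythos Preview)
Your proposal is correct and follows essentially the same route as the paper: both arguments write the weak solution in its spectral expansion $\phi=\sum_{m\ge 1}\lambda_m^{-1}\langle e_m,h\rangle e_m$, verify the fixed-point identity~\cref{eq:Poisson-semigroup} term-by-term, and invoke the strict contraction $\|P_\epsilon\|_{L^2_0(\pr)}=e^{-\epsilon\lambda_1}<1$ for uniqueness. Your write-up is a bit more explicit about justifying the termwise manipulations (Parseval, the bound $(1-e^{-\epsilon\lambda_m})/\lambda_m\le\epsilon$, and the Fubini interchange), but there is no substantive difference in strategy.
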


 \medskip


 The semigroup formulation has led to the diffusion-map based algorithm which is the
 main focus of the  remainder of this paper.

\section{Diffusion map-based Algorithm}
\label{sec:kernel}

The diffusion map-based  algorithm is based on a numerical approximation of the
fixed-point equation~\cref{eq:Poisson-semigroup}.  The main technique is to
approximate the semigroup $P_\epsilon$ in the following three
steps:

\begin{enumerate}
\item \newP{Diffusion map approximation:} A family of Markov operators $\{\Teps\}_{\epsilon>0}$ are
  defined as follows: 
\begin{equation}
\Teps f(x) := \frac{1}{n_\epsilon(x)}\int_{\Re^d}k_\epsilon(x,y)f(y)\pr(y)\ud y,
\label{eq:Teps-definition}
\end{equation} 
where $n_\epsilon(x):=\int
\keps(x,y)\rho(y) \ud y$ is the normalization factor,
\begin{equation*}
\keps(x,y):= \frac{g_\epsilon(x,y)}{\sqrt{\int
g_\epsilon(x,z) \rho(z) \ud z}\sqrt{\int
g_\epsilon(y,z) \rho(z) \ud z}}, \label{eq:keps}
\end{equation*}
and $g_\epsilon(x,y):=e^{-\frac{|x-y|^2}{4\epsilon}}$ is the Gaussian
  kernel in $\Re$.  For small positive values of $\epsilon$, the
  Markov operator $T_\epsilon$ is referred to as the {\it diffusion map} approximation of
  the exact semigroup
  $P_\epsilon$~\cite{coifman,hein-consistency-2005}. The precise statement of this approximation is contained in~\cref{prop:Tepsn-convergence}. For the
special case of Gaussian density, an
explicit formula for the diffusion map appears in the~\cref{apdx:Laplacian-Gaussian}.   

%

\item \newP{Empirical approximation:} The operator $\Teps$ is approximated
empirically by $\{\TepsN\}_{\epsilon > 0, N \in \mathbb{N}}$ defined as follows:
\begin{equation}
\TepsN f(x) :=\frac{1}{n_\epsilon^{(N)}(x)}\sum_{j=1}^N \kepsN(x,X^j)f(X^j),
\label{eq:TepsN-definition}
\end{equation} 
where $n_\epsilon^{(N)}(x):=\sum_{i=1}^N\keps(x,X^i)$ is the normalization factor and
\begin{equation*}
\kepsN(x,y) :=  \frac{\geps(x,y) }{\sqrt{\sum_{j=1}^N g_\epsilon(x,X^j)}\sqrt{\sum_{j=1}^N g_\epsilon(y,X^j)}}.
\label{eq:kepsN}
\end{equation*}
Recall that $X^i \overset{\text{i.i.d}}{\sim}\pr$ for
$i=1,\ldots,N$. So, by law of large numbers
(LLN), $\TepsN f$ 
represents an empirical approximation of the diffusion map $\Teps$. The
precise statement of the empirical approximation is contained
in~\cref{prop:TepsN-convergence}.

\item \newP{Approximation as Markov matrix:} 
An $N\times N$ Markov matrix $\Ten$ is defined with 
  $(i,j)$-th element given by
\begin{equation}\label{eq:Ten-definition}
 \Ten_{ij} = \frac{1}{\nepsN(X^i)}\KepsN (X^i,X^j).
\end{equation}
\end{enumerate}

\newP{Finite-dimensional fixed-point equation:} Using the three steps above, the
original infinite-dimensional fixed-point
equation~\cref{eq:Poisson-semigroup} is approximated as a finite dimensional
fixed-point equation
\begin{equation}\label{eq:fixed-pt-finite-N}
{\sf \Phi} =  \Ten {\sf  \Phi} + \epsilon {(\hvec - \pi(h))},
\end{equation}
where ${\hvec} := (h(X^1),\ldots,h(X^N))$ is a
$N\times 1$ column vector, and $\pi(h)=\sum_{i=1}^N \pi_ih(X^i)$ where
the probability vector $\pi_i=\frac{\nepsN(X^i)}{\sum_{j=1}^N\nepsN(X^j)}$ is the unique stationary distribution of the Markov matrix $\Ten$.
The solution $ \sf \Phi$ is used to define an approximation to the solution of the Poisson equation as follows: 
\begin{equation}
\phiepsN(x) := \frac{1}{n_\epsilon^{(N)}(x)}\sum_{j=1}^N \kepsN(x,X^j)
{\sf \Phi}_j + \epsilon  (h(x)-\pi(h)).
\label{eq:phiepsN}
\end{equation}
The approximation for the gain function is as follows:
\begin{equation}
\K_\epsilon^{(N)} (x) = \nabla \left[ \frac{1}{n_\epsilon^{(N)}(x)}\sum_{j=1}^N \kepsN(x,X^j)
({\sf \Phi}_j + \epsilon   \hvec_j)\right].
\label{eq:empirical_formula_for_gain}
\end{equation}
Upon evaluating the gradient in closed-form, the following linear
formula results for the gain function evaluated at particle locations:
\begin{equation}
\K^i :=\K_\epsilon^{(N)}(X^i) = \sum_{j=1}^Ns_{ij}X^j,
\label{eq:gain-linear-form}
\end{equation}
where 
\begin{equation}\label{eq:s-r-definition}
s_{ij} := \frac{1}{2\epsilon}\Ten_{ij}(r_j-\sum_{k=1}^N
\Ten_{ik}r_k),\quad r_j := {\sf \Phi}_j + \epsilon {\hvec}_j.
\end{equation}
 The details of the calculation leading to the linear formula appear in the~\cref{apdx:gain-linear-form}.  

\todo{
\begin{remark}[Numerical procedure]  The fixed-point problem~\eqref{eq:fixed-pt-finite-N} is solved in an iterative manner. The vector $\phivec$ is initialized to $\phivec_0=(0,\ldots,0)\in \Re^N$ and updated according to
\begin{equation}\label{eq:iteration}
\phivec_{n+1} = \Ten \phivec_n +  \epsilon {(\hvec - \pi(h))},
\end{equation}
for $n=1,\ldots,L$ 
for a finite number of $L$ iterations. 
The procedure is guaranteed to converge, with a geometric convergence
rate, because $\Ten$ is a strict contraction on $L^2_0(\pi)$ (\cref{prop:N}-(ii)). The overall algorithm is presented
in~\cref{alg:kernel}.      


The proposed iterative procedure~\eqref{eq:iteration} is preferred to
other numerical procedures because (i) it is straightforward to
implement and does not require matrix inversion; (ii) it may be
numerically more efficient than solving a system of $N$ linear
equations;  and (iii) it
allows one to use the solution obtained from the previous filter step,
as initialization for the iterative procedure~\eqref{eq:iteration},
resulting in quick convergence -- typically in a few iterations. The
reason for quick convergence is that the change in the solution of the
fixed point equation~\eqref{eq:fixed-pt-finite-N} is (typically) small
from one filtering step to the next.  This is because the change in
particle locations is (typically) small for a small choice of time
increment.  
\end{remark}
}

\begin{algorithm}
 \caption{diffusion-map based algorithm for gain function approximation}
 \begin{algorithmic}[1]
     \REQUIRE $\{X^i\}_{i=1}^N$, $\{h(X^i)\}_{i=1}^N$, $\phivec_{\text{prev}}$, $\epsilon$, L
     \ENSURE $\{\K^i\}_{i=1}^N$ \medskip
     \STATE Calculate $g_{ij}:=e^{-\frac{|X^i-X^j|^2}{4\epsilon}}$ for $i,j=1$ to $N$\medskip
     \STATE Calculate $k_{ij}:=\frac{g_{ij}}{\sqrt{\sum_l g_{il}}\sqrt{\sum_l g_{jl}}}$ for $i,j=1$ to $N$
     \STATE Calculate $d_i= \sum_{j} k_{ij}$ for $i=1$ to $N$
     \STATE Calculate $\Ten_{ij}:=\frac{k_{ij}}{d_i}$ for $i,j=1$ to $N$
     \STATE Calculate $\pi_i=\frac{d_i}{\sum_{j}d_j}$ for $i=1$ to $N$
     \STATE Calculate $\hat{\hvec}= \sum_{i=1}^N \pi_jh(X^i)$ \medskip
     \STATE Initialize $\phivec=\phivec_{\text{prev}}$ 
     \medskip
     \FOR {$t=1$ to  L}
     \STATE  $\phivec_i= \sum_{j=1}^N \Ten_{ij} \phivec_j + \epsilon (\hvec-\hat{\hvec})$ for $i=1$ to $N$\medskip
     \ENDFOR
     \STATE Calculate $r_i = \phivec_i + \epsilon \hvec_i$ for $i=1$ to $N$
     \STATE Calculate $s_{ij} = \frac{1}{2\epsilon}\Ten_{ij}(r_j-\sum_{k=1}^N
     \Ten_{ik}r_k)$ for $i,j=1$ to $N$
     \STATE Calculate $\K^i = \sum_j s_{ij}X^j$ for $i=1$ to $N$
 \end{algorithmic}
 \label{alg:kernel}
\end{algorithm}

\begin{remark} \normalfont The computational complexity of the diffusion-map based
  algorithm is $O(N^2)$ because of the need to assemble the $N\times
  N$ matrix $\Ten$.  The computational complexity may be reduced using the sparsity structure of the matrix $\Ten$ and sub-sampling techniques. Compared to the Galerkin algorithm with computational complexity of $O(Nd^3)$, the diffusion-map algorithm is advantageous in high-dimensional problems where $d>>N$.
\end{remark}

\subsection{Approximation results}

The notation $G_\epsilon (f)(x) := \int \geps(x,y)f(y)\ud y$ is used to
denote the heat semigroup with a Gaussian kernel $\geps(x,y)$, and
\begin{subequations}
\begin{align}
U_\epsilon &\coloneqq \frac{1}{2}\log(\frac{G_\epsilon(\pr)}{\pr^2}),\quad U \coloneqq- \frac{1}{2}\log (\pr),\label{eq:Ueps-definition}\\ 
W_\epsilon&\coloneqq \frac{1}{\epsilon}\log(e^{U_\epsilon}G_\epsilon (e^{-U_\epsilon})),\quad W\coloneqq|\nabla U|^2-\Delta U.
\label{eq:Weps-definition}
\end{align}	
\end{subequations}


The proof of the following proposition appears
in~\cref{apdx:Tepsn-convergence}.   
\begin{proposition} \label{prop:Tepsn-convergence}
	Consider the family of Markov operators
        $\{\Teps\}_{\epsilon>0}$ defined according
        to~\cref{eq:Teps-definition}.  Let $n \in \mathbb{N}$, $t \in
        (0,t_0)$ with $t_0<\infty$, and $\epsilon = \frac{t}{n}$. Then,
	\begin{romannum}
		\item The semigroup $P_t$ and the operator $\Teps^n$ admit the following representations: 
		\begin{align}
		P_tf(x) &= e^{U(x)}\Expect[e^{-\int_0^t W(B^x_{2s})\ud s} e^{-U(B^x_{2t})} f(B^x_{2t})],\label{eq:Pt-Feynman-Kac}\\
		\nTeps^n f(x) &= e^{U_\epsilon(x)}\Expect[e^{-\epsilon\sum_{k=0}^{n-1}W_\epsilon(B^x_{2k\epsilon})}e^{-U_\epsilon(B^x_{2n\epsilon})}f(B^x_{2n\epsilon})],\label{eq:Teps-Feynman-Kac}
		\end{align}
		for all $x\in \Re^d$ where $B^x_t$ is the Brownian motion with initial condition $B_0^x=x$.
		\item In the asymptotic  limit as $\epsilon \to 0$:
		\begin{subequations} 
		\begin{align}
		U_\epsilon(x)&=U(x) + 2\epsilon W(x) + \epsilon \Delta V(x) + \epsilon^2 r^{(1)}_\epsilon(x), \label{eq:Ueps-U}\\ 
W_\epsilon(x)&=W(x)+ \epsilon r^{(2)}_\epsilon(x),\label{eq:Weps-W}
		\end{align}
		\end{subequations}
	where $|r^{(1)}_\epsilon(x)|,|r^{(2)}_\epsilon(x)| = O(|x|^2)$ and $|\nabla r^{(1)}_\epsilon(x) |= O(|x|)$ as $|x|\to \infty$.
		\item For all functions $f$ such that $f,\nabla f \in L^4(\pr)$:
		\begin{equation}\label{eq:Teps-Peps-limit}
		\|(T_{\frac{t}{n}}^n - P_t)f\|_{L^2(\pr)} \leq \frac{\sqrt{t}}{n}C(\|f\|_{L^4(\pr)}+\|\nabla f\|_{L^4(\pr)}),
		\end{equation}
		where the constant $C$ only depends on $t_0$ and $\pr$.
	\end{romannum}
\end{proposition}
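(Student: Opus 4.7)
The statement has three parts: (i) two Feynman--Kac representations, (ii) asymptotic expansions of $U_\epsilon,W_\epsilon$ as $\epsilon\to 0$, and (iii) the quantitative comparison in $L^2(\pr)$. I would treat (i) and (ii) as preparatory and then combine them to prove (iii).

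\textbf{Part (i).} For $P_t$, I would use the ground-state transformation: a direct computation with $U=-\tfrac12\log\pr$ shows $\pr^{1/2}\Delta_\pr\pr^{-1/2}=\Delta-W$, so $P_tf=e^{U}\,e^{t(\Delta-W)}(e^{-U}f)$. Classical Feynman--Kac for the Schr\"odinger operator $\Delta-W$ gives \eqref{eq:Pt-Feynman-Kac}, the factor $2$ inside the Brownian argument arising because the generator of $B_t$ is $\tfrac12\Delta$. For $\nTeps^n$, I would first rewrite $\Teps$ in the form $\Teps f = e^{U_\epsilon-\epsilon W_\epsilon}\Geps(e^{-U_\epsilon}f)$, by expressing $\keps$ in terms of $\Geps\pr$ and using the definition of $W_\epsilon$. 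Interpreting $\Geps g(x)=\E[g(B^x_{2\epsilon})]$ as a one-step Brownian transition and iterating $n$ times, the Markov property collapses the nested expectations into \eqref{eq:Teps-Feynman-Kac}: the prefactor $e^{U_\epsilon(x)}$ is the $k=0$ term and the remaining $n{-}1$ factors of $e^{-\epsilon W_\epsilon}$ and the terminal $e^{-U_\epsilon}$ migrate inside the expectation along the Brownian path.

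\textbf{Part (ii).} The key tool is the pointwise Taylor expansion $\Geps g=g+\epsilon\Delta g+\tfrac{\epsilon^2}{2}\Delta^2 g+O(\epsilon^3)$ for smooth $g$ of polynomial growth. Applied to $g=\pr$ and then expanded under the logarithm in $U_\epsilon=\tfrac12\log(\Geps\pr/\pr^2)$ it gives \eqref{eq:Ueps-U}; applied to $g=e^{-U_\epsilon}$, combined with the identity $e^{U_\epsilon}\Delta(e^{-U_\epsilon})=|\nabla U_\epsilon|^2-\Delta U_\epsilon$ and the logarithmic expansion in $W_\epsilon=\tfrac{1}{\epsilon}\log(e^{U_\epsilon}\Geps e^{-U_\epsilon})$, it gives \eqref{eq:Weps-W}. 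The required pointwise growth bounds on $r_\epsilon^{(1)},r_\epsilon^{(2)}$ come directly from A1: the quadratic-plus-bounded decomposition of $V$ makes $\nabla V$ grow linearly and $\Delta V$ and all higher derivatives bounded, so the Taylor remainders in $\Geps$ are controlled by explicit polynomials in $|x|$.

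\textbf{Part (iii).} Writing both operators through their Feynman--Kac representations, I would split
\begin{equation*}
\nTeps^n f(x)-P_tf(x)=(e^{U_\epsilon(x)}-e^{U(x)})\,\E[F_0(x)]\;+\;e^{U_\epsilon(x)}\,\E[F_\epsilon(x)-F_0(x)],
\end{equation*}
where $F_0=e^{-\int_0^t W(B^x_{2s})\ud s}e^{-U(B^x_{2t})}f(B^x_{2t})$ and $F_\epsilon$ is its discretized analogue. The difference $F_\epsilon-F_0$ splits further into three pieces: (a) the endpoint factor $e^{-U_\epsilon(B^x_{2t})}-e^{-U(B^x_{2t})}$, controlled by (ii) pointwise; (b) the potential factor coming from $W_\epsilon{-}W$, again pointwise $O(\epsilon)$ by (ii); and (c) the Riemann-sum error $\epsilon\sum_{k=0}^{n-1}W(B^x_{2k\epsilon})-\int_0^tW(B^x_{2s})\ud s$, for which I would use a second-moment estimate, $\E[(\cdot)^2]\le Ct\epsilon^2$, from the local Lipschitz bound on $W$ and the Brownian increment variance $2(s-k\epsilon)$ on each subinterval. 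Passing to $L^2(\pr)$ uses Cauchy--Schwarz inside the Brownian expectation followed by a Gaussian change-of-measure between the law of $B^x_{2t}$ (under $x\sim\pr$) and $\pr$ itself, which converts the $L^2_x$ bound on $\E[f(B^x_{2t})^2]^{1/2}$ into $\|f\|_{L^4(\pr)}$. The $\|\nabla f\|_{L^4(\pr)}$ contribution arises from the same Cauchy--Schwarz step once $f(B^x_{2t})$ is replaced by its first-order Taylor expansion $f(x)+\nabla f(\cdot)\cdot(B^x_{2t}-x)$, which is needed to absorb the $(1+|x|^2)$ growth coming from the remainders $r_\epsilon^{(j)}$ in part (ii).

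\textbf{Main obstacle.} The principal difficulty is that all three exponential factors $e^{U_\epsilon(x)}$, $e^{-U(B^x_{2t})}$ and $e^{-\epsilon\sum W_\epsilon(B^x_{2k\epsilon})}$ are unbounded, with $U,W$ growing quadratically under A1. Keeping every expectation finite uniformly in $\epsilon$ and $n$ requires a delicate interplay between the Gaussian tails of $\pr$, the quadratic lower bound on $V$, and the Gaussian distribution of $B^x_{2t}$ (with variance $2t\le 2t_0$ bounded). This moment bookkeeping is the only place where the restriction $t<t_0$ is really used, and it is what produces the constant $C$ depending on $t_0$ and $\pr$ but not on $n$ or $f$. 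Once these uniform moment bounds are in place, assembling the three error pieces (a)--(c) and the prefactor difference yields the stated $\sqrt{t}/n$ rate.
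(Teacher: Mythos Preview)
Your treatment of parts (i) and (ii) matches the paper's proof almost exactly: the ground--state transform $e^{-U}\Delta_\rho e^{U}=\Delta-W$ followed by Feynman--Kac, the rewriting $\Teps f = e^{U_\epsilon-\epsilon W_\epsilon}\Geps(e^{-U_\epsilon}f)$ iterated along the Brownian path, and a Taylor/heat--kernel expansion of $\Geps\rho$ to read off the asymptotics of $U_\epsilon,W_\epsilon$. The paper carries out (ii) by explicitly separating the Gaussian factor of $\rho$ (Assumption~A1) and convolving it exactly, then Taylor--expanding only the bounded perturbation $e^{-w}$; your generic expansion $\Geps g=g+\epsilon\Delta g+\dots$ leads to the same expressions.

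Part (iii) is where your proposal and the paper diverge. You compare $\Teps^n$ and $P_t$ directly via their Feynman--Kac formulas and then split $F_\epsilon-F_0$ into endpoint, potential, and Riemann--sum pieces. The paper instead inserts an \emph{intermediate semigroup} $\tilde{P}_t:=e^{t\Delta_q}$ for the density $q=e^{-2U_\epsilon}$ and writes $\Teps^n-P_t=(\Teps^n-\tilde{P}_t)+(\tilde{P}_t-P_t)$. The point of this move is that $\tilde{P}_t$ shares \emph{exactly} the endpoint weight $e^{-U_\epsilon}$ with $\Teps^n$, so the first difference reduces purely to the potential/Riemann--sum error $\zeta_t=e^{-\epsilon\sum W_\epsilon(B_{2k\epsilon}^x)}-e^{-\int_0^t W(B_{2s}^x)\ud s}$, and a single Cauchy--Schwarz plus the $L^4(\rho)$ change of measure gives $\|\Teps^n f-\tilde{P}_tf\|_{L^2(\rho)}\le C\epsilon\sqrt{t}\,\|f\|_{L^4(\rho)}$. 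The $U_\epsilon\!\to\!U$ discrepancy is then handled \emph{entirely} in the second step, via the Duhamel identity $P_tf-\tilde{P}_tf=\int_0^tP_{t-s}(\Delta_\rho-\Delta_q)\tilde{P}_sf\,\ud s$ together with $(\Delta_\rho-\Delta_q)g=2(\nabla U-\nabla U_\epsilon)\cdot\nabla g$; this is precisely where $\|\nabla f\|_{L^4(\rho)}$ enters, through $\|\nabla\tilde{P}_sf\|_{L^4(q)}\le\|\nabla f\|_{L^4(q)}$.

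The weak point in your write--up is the sentence explaining how $\|\nabla f\|_{L^4(\rho)}$ appears. Taylor--expanding $f(B^x_{2t})=f(x)+\nabla f(\cdot)\cdot(B^x_{2t}-x)$ does not help ``absorb'' the $(1+|x|^2)$ growth of $r_\epsilon^{(j)}$: the Brownian increment has size $\sqrt{t}$, not $\epsilon$, so this expansion introduces an $O(1)$ error rather than an $O(\epsilon)$ gain, and the polynomial growth is in any case killed by the Gaussian tails of $\rho$ after Cauchy--Schwarz (this is already what produces $\|f\|_{L^4}$). In fact, if your direct splitting is executed carefully it appears to yield a bound in terms of $\|f\|_{L^4(\rho)}$ alone, without $\nabla f$; so either your route gives a slightly sharper inequality than stated, or there is a hidden moment issue you have not surfaced. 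Either way, the mechanism you cite for $\nabla f$ is not the right one. The paper's intermediate--semigroup decomposition avoids this ambiguity and makes the provenance of the gradient term transparent.
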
 

The proof of the following proposition appears
in~\cref{apdx:TepsN-convergence}.

\begin{proposition}\label{prop:TepsN-convergence}
	Consider the diffusion map kernel $\{\Teps\}_{\epsilon>0}$, and its empirical approximation $\{\TepsN\}_{\epsilon>0,N\in \mathbb{N}}$. Then for any bounded continuous function $f \in C_b(\Re^d)$:
	\begin{romannum}  
		\item (Almost sure convergence) For all $x \in \Re^d$
		\begin{equation*}
		\lim_{N \to \infty} \TepsN f(x) = \Teps f(x),\quad \text{a.s.}
		\end{equation*}
		\item (Convergence rate) For any $\delta \in (0,1)$, in the asymptotic limit as $N \to \infty$,  
		\begin{equation}
		\int |\TepsN f(x) - \Teps f(x)|^2\pr(x)\ud x \leq O(\frac{\log(\frac{N}{\delta})}{N\epsilon^{d}}), 
		\end{equation}
		with probability higher than $1-\delta$.
	\end{romannum}
\end{proposition}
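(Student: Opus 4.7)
The plan is to express both $\TepsN f(x)$ and $\Teps f(x)$ as ratios and to bound numerator and denominator errors separately. After cancelling the common factor $\sqrt{(1/N)\sum_l \geps(x,X^l)}$ that multiplies both the numerator and denominator of $\TepsN f(x)$, one obtains
$$\TepsN f(x)=\frac{\widetilde N^{(N)}(x)}{\widetilde D^{(N)}(x)},\qquad \Teps f(x)=\frac{\widetilde N(x)}{\widetilde D(x)},$$
where $\widetilde N^{(N)}(x):=\frac{1}{N}\sum_{j}\geps(x,X^j)f(X^j)/\sqrt{A_N(X^j)}$ and $\widetilde N(x):=\int\geps(x,y)f(y)\pr(y)/\sqrt{A(y)}\,dy$, with $A(y):=\Geps(\pr)(y)$ and $A_N(y):=\frac{1}{N}\sum_l\geps(y,X^l)$; the quantities $\widetilde D^{(N)}$ and $\widetilde D$ are defined analogously with $f\equiv 1$. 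The elementary identity
$$\TepsN f(x)-\Teps f(x)=\frac{\widetilde N^{(N)}-\widetilde N}{\widetilde D^{(N)}}+\Teps f(x)\cdot\frac{\widetilde D-\widetilde D^{(N)}}{\widetilde D^{(N)}}$$
then reduces matters to bounding $\widetilde N^{(N)}-\widetilde N$, $\widetilde D^{(N)}-\widetilde D$, and proving a lower bound on $\widetilde D^{(N)}(x)$.

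The coupling introduced by $A_N(X^j)$ inside the summands of $\widetilde N^{(N)}$ is removed by inserting the decoupled average $\overline N^{(N)}(x):=\frac{1}{N}\sum_j\geps(x,X^j)f(X^j)/\sqrt{A(X^j)}$ and splitting $\widetilde N^{(N)}-\widetilde N=(\widetilde N^{(N)}-\overline N^{(N)})+(\overline N^{(N)}-\widetilde N)$. The second piece is a genuine i.i.d.\ empirical average; the SLLN yields the almost-sure convergence needed for (i), and Hoeffding's inequality yields the $O(\sqrt{\log(1/\delta)/N})$ deviation needed for (ii). The first piece is pointwise bounded, up to a constant, by $\|f\|_\infty \sup_{y\in K}|A_N(y)-A(y)|/(\inf_K A)^{3/2}$ on a ball $K\subset\Re^d$ carrying almost all the $\pr$-mass, which reduces the problem to a uniform LLN for $A_N$ on $K$: for (i) this follows from the SLLN at a countable dense subset combined with the $O(\epsilon^{-1/2})$ Lipschitz regularity of $A$ and $A_N$, and for (ii) from pointwise Hoeffding at the centres of a fine covering of $K$ with the same Lipschitz regularity to interpolate. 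The analogous argument with $f\equiv 1$ bounds $\widetilde D^{(N)}-\widetilde D$ and, on a high-probability event, certifies $\widetilde D^{(N)}(x)\ge\tfrac12\widetilde D(x)$.

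To convert these into the $L^2(\pr)$ bound in (ii), one substitutes the deviation estimates into the ratio identity, using the Laplace-type scalings $A(y)\sim\epsilon^{d/2}\pr(y)$ and $\widetilde D(x)\sim\epsilon^{d/4}\sqrt{\pr(x)}$ that hold under Assumption~(A1); squaring and integrating against $\pr$ produces the factor $\epsilon^{-d}$ in the advertised bound $O(\log(N/\delta)/(N\epsilon^d))$. The principal obstacle is passing from pointwise to uniform control of $A_N-A$ on the unbounded domain $\Re^d$: the ball $K$ must be large enough to absorb the $\pr$-tails but small enough that a fine covering has only polynomially many centres. Assumption~(A1) permits a radius of $O(\sqrt{\log N})$, and this is precisely the source of the logarithmic factor in the bound. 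The tail contribution from $x\notin K$ and from the degeneracy of $\widetilde D$ as $|x|\to\infty$ is controlled by combining the trivial pointwise bound $|\TepsN f(x)-\Teps f(x)|\le 2\|f\|_\infty$ with the Gaussian tail decay of $\pr$, ensuring it does not spoil the final $L^2(\pr)$ integral.
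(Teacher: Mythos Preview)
Your decomposition into numerator and denominator errors, followed by decoupling the nested empirical averages via an intermediate term $\overline N^{(N)}$, is exactly the route taken in the paper; for part~(i) your argument is essentially identical to the paper's (the paper writes the coupling correction as $\eta_j:=\sqrt{A(X^j)/A_N(X^j)}-1$ and combines SLLN with a Borel--Cantelli argument, which is equivalent to your SLLN-plus-equicontinuity reasoning).

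For part~(ii), however, your treatment of the coupling piece does not deliver the stated rate. Bounding $\widetilde N^{(N)}-\overline N^{(N)}$ by $\|f\|_\infty\,\sup_{y\in K}|A_N(y)-A(y)|/(\inf_K A)^{3/2}$ is too crude for two compounding reasons. First, you use Hoeffding on $A_N(y)-A(y)$, whose summands $\geps(y,X^l)\in[0,1]$ have variance of order $(g_{\epsilon/2}\!*\!\pr)(y)\sim\epsilon^{d/2}\pr(y)$, far below their range; Bernstein buys a crucial factor of $\epsilon^{d/4}$ in the deviation, and without it one full power of $\epsilon^{-d/2}$ is lost in the squared error. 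Second, and more seriously, the factor $(\inf_K A)^{-3/2}\sim(\epsilon^{d/2}\inf_K\pr)^{-3/2}$ is devastating once $K$ has radius $O(\sqrt{\log N})$: under Assumption~(A1), $\inf_K\pr\sim N^{-c}$, so $(\inf_K A)^{-3/2}$ contributes a positive power of $N$ that overwhelms the $N^{-1/2}$ fluctuation; shrinking $c$ to tame this makes the tail $\int_{K^c}\pr$ too large. There is no choice of $K$ that balances both.

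The paper sidesteps this by never taking a sup or inf over $K$ in the coupling term. It applies Bernstein's inequality at each sample point to get $|\eta_j|\lesssim\sqrt{\log(N/\delta)/(N\epsilon^{d/2}(\geps*\pr)(X^j))}$ (the $\epsilon^{d/2}$ arising from the variance), uses a union bound over $j=1,\dots,N$ (the source of $\log(N/\delta)$), and then leaves the residual factor $1/\sqrt{(\geps*\pr)(X^j)}$ inside the $j$-average so that it is integrated by the LLN rather than replaced by its supremum. That averaging is what keeps the pointwise bound proportional to a quantity whose $\pr$-integral is finite and of the right order in $\epsilon$. Replacing your Hoeffding-plus-$\sup/\inf$ estimate by this per-particle Bernstein-plus-average argument fixes the rate.
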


\begin{remark}[Related work]
	The key idea in the proof of the~\cref{prop:Tepsn-convergence}
        is the Feynman-Kac representation of the
        semigroup~\cref{eq:Pt-Feynman-Kac}.  To the best of our
        knowledge, this representation has not been used before in the analysis of the diffusion map approximation.
	Most of the existing results concerning the convergence of the
        diffusion map are based on a Taylor series expansion that
        would lead to a convergence of the form $\lim_{\epsilon \to
          0}\frac{f(x)-\Teps f(x)}{\epsilon} = \Delta_\pr f(x)$ for each
        $x\in \Re^d$~\cite{hein-consistency-2005,coifman,gine2006empirical}.  
	Convergence results of the form 
        $\lim_{n \to \infty} \|T_{\frac{t}{n}}^n f - P_tf\|_{L^2(
        \pr)}=0$
        appear in~\cite{coifman,ting2011analysis}, based on functional
        analytic arguments. The Taylor series
        type arguments typically require the distribution to be supported on a
        compact manifold which not assumed here. 
\end{remark} 

\section{Convergence and error analysis}
\label{sec:error_kernel}

The analysis of the diffusion-map algorithm involves the consideration of the following four fixed point problems:
\begin{align}
\text{(exact)}&\quad& \phi &= P_\epsilon \phi + \int_0^\epsilon P_s(h-\hat{h}_\pr)\ud s,
& \label{eq:exact-fixed-pt}\\
\text{(diffusion-map approx.)}&\quad& \phieps &= \Teps \phieps + \epsilon (h-\hat{h}_{\pr_\epsilon}),&\label{eq:kernel-approx-fixed-pt} \\
\text{(empirical approx.)}&\quad&\phiepsN & = \TepsN \phiepsN + \epsilon (h-\pi(h)),
\label{eq:empirical-fixed-pt}\\
\text{(finite-dim.)}&\quad& {\sf \Phi} & =  \Ten {\sf  \Phi} + \epsilon
({\hvec} - \pi(\hvec)),\label{eq:finite-N-fixed-pt}
\end{align} 
where $\hat{h}_{\pr_\epsilon}:= \int h(x)\preps(x)\ud x$ and
$
\preps(x):=\frac{n_\epsilon(x)\pr(x)}{\int n_\epsilon(x)\pr(x)\ud x}
$
is the density of the invariant probability distribution associated
with the Markov operator $\Teps$.

In practice, the finite-dimensional
problem~\cref{eq:finite-N-fixed-pt} is solved.  The
existence and uniqueness of the solution for this problem is the
subject of the following proposition whose proof appears in~\cref{proof:prop:N}.

\begin{proposition} \label{prop:N}
Consider the finite-dimensional fixed point
equation~\cref{eq:finite-N-fixed-pt}. \\
Then almost surely
\begin{romannum}
\item $\Ten$ is a reversible Markov matrix with a unique stationary
  distribution 
\begin{equation} \label{eq:pi-def}
\pi_i:=\frac{\nepsN(X^i)}{\sum_{j=1}^N \nepsN(X^j)},
\end{equation}
for $i=1,\ldots,N$.
\item $\Ten$ is  a strict contraction on $L^2_0(\pi)=\{v \in \Re^N;
  \sum \pi_i v_i=0\}$. Hence the fixed point
  equation~\cref{eq:finite-N-fixed-pt} has a unique solution $\phivec
  \in L^2_0(\pi)$. 

\item The (empirical approx.) fixed point
  equation~\cref{eq:empirical-fixed-pt} has a unique solution given
  by (see~\cref{eq:phiepsN})
\[
\phiepsN(x) = \frac{1}{n_\epsilon^{(N)}(x)}\sum_{j=1}^N \kepsN(x,X^j)
{\sf \Phi}_j + \epsilon  (h(x)-\pi(h)).
\]
\end{romannum}
\end{proposition}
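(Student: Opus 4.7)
\textbf{Plan for the proof of Proposition \ref{prop:N}.} The proof splits naturally into the three enumerated claims, and the entire argument is essentially finite-dimensional linear algebra built on top of the symmetry of the kernel $\kepsN$.

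\textbf{Part (i).} The plan is to read off each of the defining properties of a reversible Markov matrix directly from the construction of $\Ten$. The Gaussian kernel $\geps$ is symmetric, and the normalizing factors in the definition of $\kepsN(x,y)$ treat $x$ and $y$ symmetrically, so $\kepsN(X^i,X^j)=\kepsN(X^j,X^i)$. Non-negativity is immediate since $\geps>0$, and row-stochasticity follows from the definition of $\nepsN$: $\sum_j \Ten_{ij}=\nepsN(X^i)^{-1}\sum_j \kepsN(X^i,X^j)=1$. Plugging the candidate $\pi_i=\nepsN(X^i)/\sum_l\nepsN(X^l)$ into $\pi_i\Ten_{ij}$ gives $\kepsN(X^i,X^j)/\sum_l\nepsN(X^l)$, which is symmetric in $(i,j)$; this is the detailed balance condition, so $\pi$ is stationary and $\Ten$ is reversible. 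Uniqueness of $\pi$ follows from Perron--Frobenius: because $\geps>0$ on all of $\Re^d$, every entry of $\Ten$ is strictly positive almost surely (it is here that one uses positivity of $g_\epsilon$), so $\Ten$ is irreducible and aperiodic.

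\textbf{Part (ii).} The plan is to use reversibility to view $\Ten$ as self-adjoint on $L^2(\pi)$ and then extract a spectral gap from primitivity. Reversibility implies $\lr{\Ten u}{v}_\pi=\lr{u}{\Ten v}_\pi$, so the spectrum is real and contained in $[-1,1]$. Since $\Ten$ has strictly positive entries, Perron--Frobenius gives that the eigenvalue $1$ is simple (with eigenvector $\mathbf{1}$) and every other eigenvalue lies in $(-1,1)$ in modulus; in particular $\|\Ten\|_{L^2_0(\pi)}<1$. To apply the Banach fixed point theorem on $L^2_0(\pi)$, one checks that $\Ten$ maps this subspace to itself, which follows from stationarity via the computation $\sum_i \pi_i(\Ten v)_i=\sum_j v_j(\pi^\top \Ten)_j=\sum_j v_j\pi_j$, and that the forcing term $\epsilon(\hvec-\pi(\hvec))$ is mean-zero under $\pi$ by construction. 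Hence $I-\Ten$ is invertible on $L^2_0(\pi)$ and the fixed-point equation \cref{eq:finite-N-fixed-pt} has a unique solution there.

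\textbf{Part (iii).} The plan is a direct verification using the key observation that $\TepsN f(x)$ depends on $f$ only through the $N$ values $\{f(X^j)\}$. Denote by $\phiepsN(x)$ the function given in the statement. Restricting first to $x=X^i$: the definition of $\phiepsN$ yields $\phiepsN(X^i)=(\Ten\phivec)_i+\epsilon(\hvec_i-\pi(\hvec))$, which equals $\phivec_i$ by the finite-dimensional fixed-point equation. Then for arbitrary $x$, $\TepsN\phiepsN(x)=\nepsN(x)^{-1}\sum_j \kepsN(x,X^j)\phiepsN(X^j)=\nepsN(x)^{-1}\sum_j \kepsN(x,X^j)\phivec_j$, and adding $\epsilon(h(x)-\pi(h))$ reproduces $\phiepsN(x)$. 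For uniqueness, any other solution $\tilde\phi$ of \cref{eq:empirical-fixed-pt} must, when restricted to the particles, satisfy the finite-dimensional equation in $L^2_0(\pi)$, so by Part (ii) this restriction coincides with $\phivec$; the extension off the particles is then forced by the equation itself, which expresses $\tilde\phi(x)$ explicitly in terms of its values at the $X^j$.

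\textbf{Main obstacle.} None of the three parts involves a deep analytic difficulty; the one place where some care is needed is establishing the strict inequality $\|\Ten\|_{L^2_0(\pi)}<1$ in Part (ii), which rests on the primitivity of $\Ten$. This in turn depends on the pointwise positivity of $\geps$, so it is important that the statement is ``almost surely'' (with respect to the positions of the particles). Once this spectral gap is in hand, Parts (ii) and (iii) reduce to routine fixed-point and substitution arguments.
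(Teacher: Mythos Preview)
Your proposal is correct and follows essentially the same line as the paper: Part~(i) via symmetry of $\kepsN$, detailed balance, and strict positivity of the entries; Part~(iii) via evaluating at the particles first and then extending. The only small variation is in Part~(ii): the paper extracts the contraction quantitatively from a Doeblin-type bound, namely $\|\Ten\|_{L^2_0(\pi)}\le 1-\tfrac{N\delta}{2}$ with $\delta=\min_{ij}\Ten_{ij}>0$ (citing Stroock), whereas you appeal to Perron--Frobenius plus self-adjointness to get the strict spectral gap. Both arguments rest on the same key input (strict positivity of all entries a.s.), so this is a difference of packaging rather than substance; your route is slightly more elementary, while the paper's gives an explicit bound on the contraction rate.
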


Based on the results in~\cref{prop:semigroup-equivalent} and \cref{prop:N}, the exact solution
$\phi$ and the numerical solution $\phiepsN$ are both
well-defined.  
The remaining task is to show the convergence of $\phiepsN
\to \phi$ as $N \to \infty$ and $\epsilon \to 0$. We break the convergence analysis into two parts, bias and variance:
\begin{equation*}
\phiepsN \underset{\text{(variance)}}{\overset{N \uparrow \infty}{\longrightarrow}} \phieps
\underset{\text{(bias)}}{\overset{\epsilon \downarrow 0}{\longrightarrow}} \phi .
\end{equation*}

Before describing the general result, it is useful to first introduce an
example that helps illustrate the bias-variance trade-off in this problem.

\subsection{Example - the scalar case}
\label{ex:scalar}

In the scalar case (where $d=1$), the Poisson equation is:
\begin{equation*}
-\frac{1}{\rho(x)}\frac{\ud }{\ud x}(\rho(x) \frac{\ud \phi}{\ud
  x}(x)) = h(x)-\hah.
\end{equation*}
Integrating twice yields the solution explicitly
\begin{equation}
\begin{aligned}
\K_{\text{exact}} (x) = \frac{\ud \phi}{\ud x}(x) &= -\frac{1}{\rho(x)}\int_{-\infty}^x
\rho(z)(h(z)-\hah)\ud z.
\end{aligned}
\label{eq:scalar}
\end{equation}

For the choice of $\rho$ as the sum of two Gaussians
${\cal N}(-1,\sigma^2)$ and ${\cal N}(+1,\sigma^2)$ with $\sigma^2=0.2$ and $h(x)=x$, the solution
obtained using~\cref{eq:scalar} is depicted in
\cref{fig:kernel-approx}~(a).  Also depicted is the approximate
solution obtained using the diffusion-map algorithm with $N=200$, for different values of $\epsilon$. The constant gain approximation is evaluated according to the explicit integral formula~\eqref{eq:const-gain-approx}.  
As $\epsilon \to \infty$ the approximate gain converges to the
constant gain approximation. As $\epsilon$ becomes smaller, the
approximation becomes more accurate.
However, for very small values of $\epsilon$ the approximation is poor due to the variance error. 

The bias-variance trade-off while varying the the parameter $\epsilon$ is depicted in \cref{fig:kernel-approx}~(b). The $L^2$ error is computed as a Monte-Carlo average:
\begin{equation}
\text{m.s.e} = \frac{1}{M}\sum_{m=1}^M \frac{1}{N}\sum_{i=1}^N | {\sf
  K}^{(m)}(X^i) - {\sf K}_{\text{exact}}(X^i) |^2.
\label{eq:MC_error}
\end{equation}
\cref{fig:kernel-approx}~(b)
depicts the error obtained from averaging over $M=1000$
simulations as a function of the parameter $\epsilon$. It is observed
that for a fixed number of particles $N$, there is an optimal value of
$\epsilon$ that minimizes the error. 
\begin{figure}[t]
\centering
\subfloat[]{
\includegraphics[width=0.455\columnwidth]{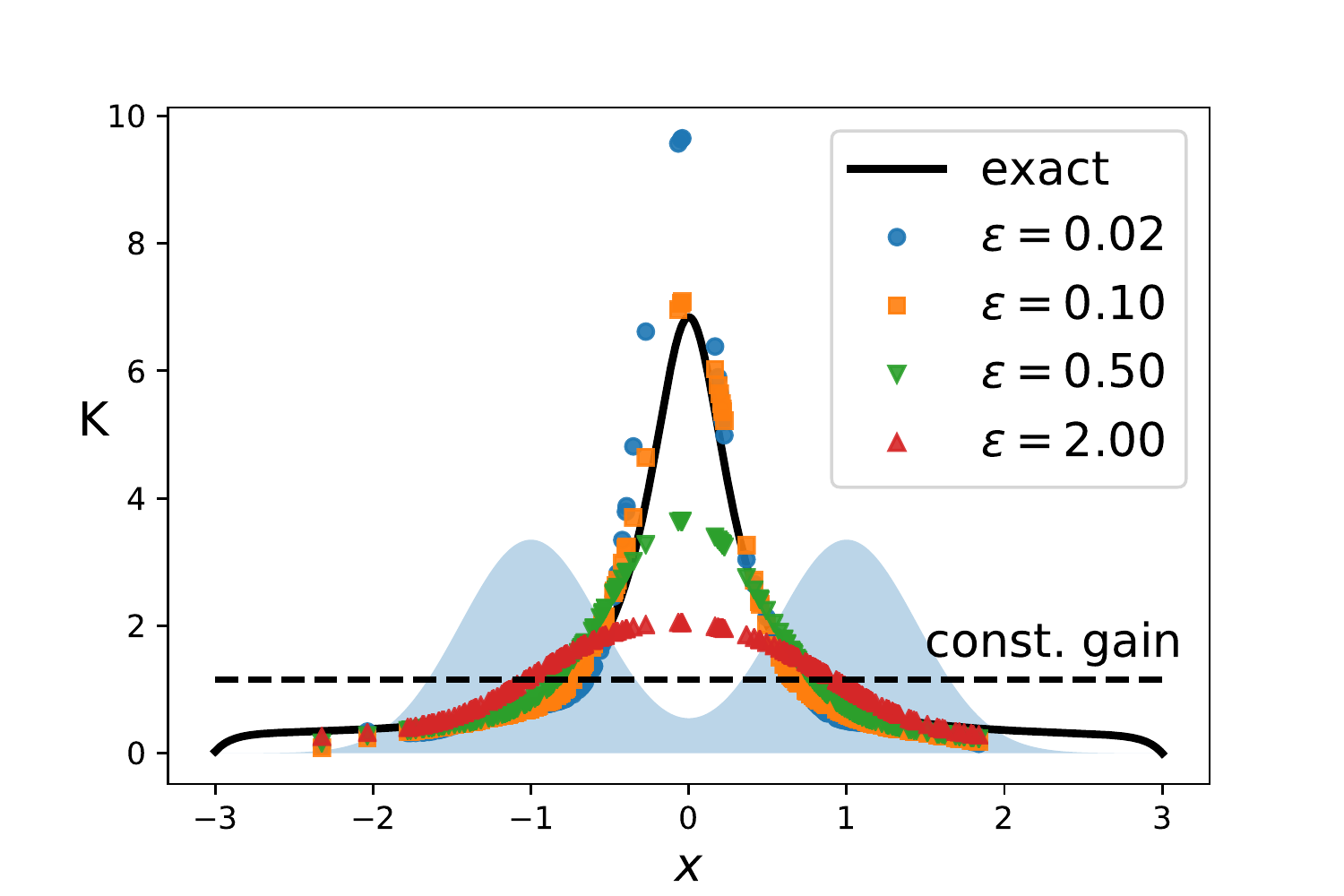}
}
\subfloat[]{
\includegraphics[width=0.445\columnwidth]{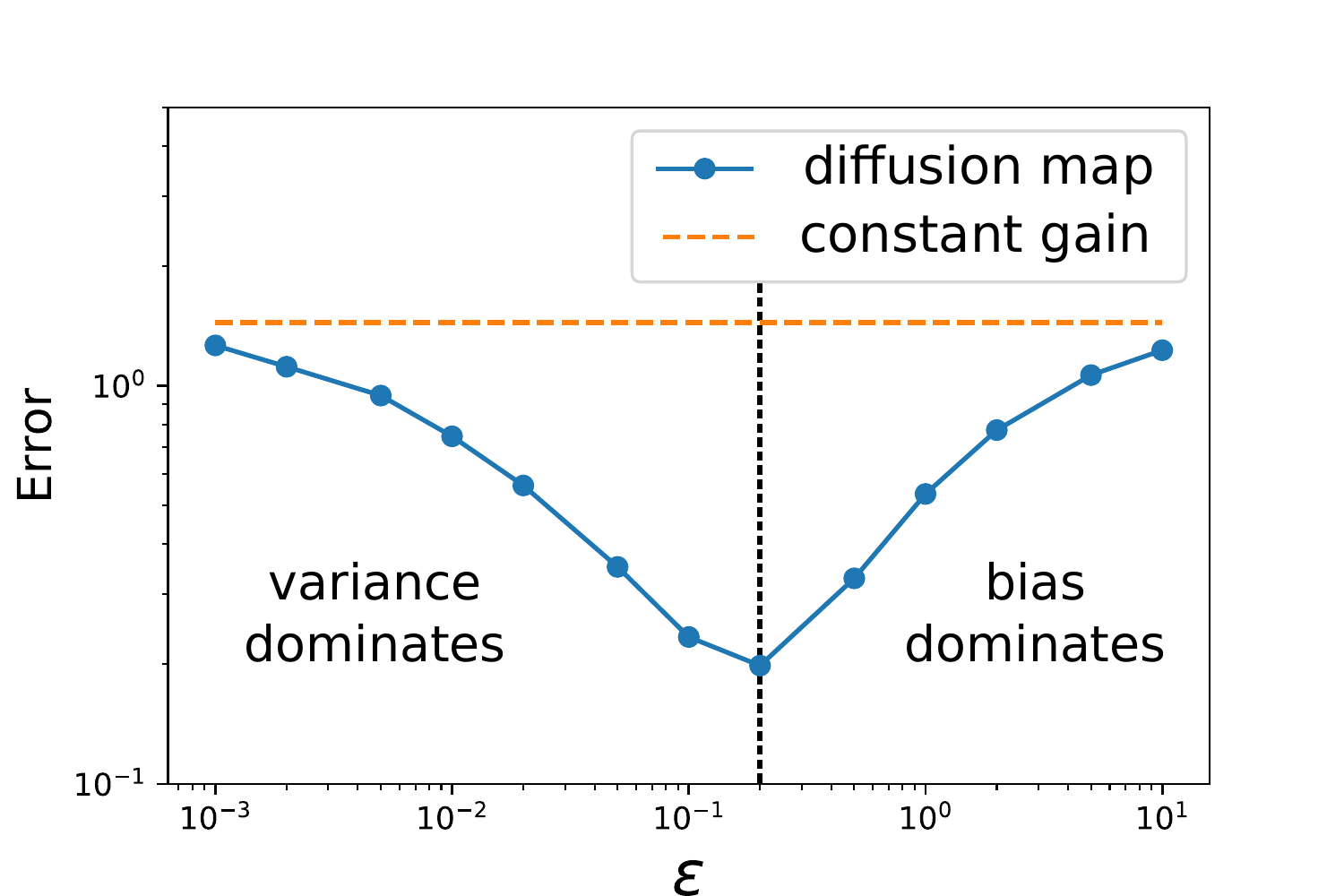}
}
\caption{Simulation results for the diffusion-map algorithm for the
  scalar bimodal example: (a) Approximate gain function for different
  choices of $\epsilon$ compared to the exact gain function (solid
  line).  The shaded area in the background 
  is the bimodal probability density function $\pr$.  The dashed line is the constant
  gain approximation solution; (b) Gain function approximation error
  of the diffusion-map algorithm as a function of the parameter
  $\epsilon$.  All the results are with $N=200$ particles.}
\label{fig:kernel-approx}
\end{figure} 

The vector counterpart of this example appears in \cref{ex:vector}.

\subsection{Bias}

The analysis of bias has two parts:
\begin{enumerate}
\item To show that the (diffusion-map) fixed-point
  equation~\cref{eq:kernel-approx-fixed-pt} admits a unique solution
  $\phieps$ 
  for {\em all} positive choices of $\epsilon$;
\item To show that $\phieps \rightarrow \phi$ as $\epsilon \downarrow
  0$.  
\end{enumerate}


For $n \in \mathbb{N}$, iterate the fixed-point equation~\cref{eq:kernel-approx-fixed-pt} $n$ times to obtain:
\begin{equation}\label{eq:fixed-point-n}
\phi_\epsilon = \Teps^n \phieps + \sum_{k=0}^{n-1} \epsilon \Teps^k(h-\hat{h}_{\pr_\epsilon}).
\end{equation}
We let $\epsilon = \frac{t}{n}$ for some $t>0$ and study the solution
of this fixed-point equation as $n \to \infty$. Note that the solution
to the iterated fixed-point equation~\eqref{eq:fixed-point-n} is
identical to the solution to the fixed-point
equation~\cref{eq:kernel-approx-fixed-pt}.  

The fixed-point equation~\cref{eq:fixed-point-n}  is the (discrete)
Poisson equation that appears in the theory of Markov chain simulation
\cite{glynn96,MT} and stochastic
control~\cite[Ch. 9]{meyn2008control}. Theory presented in these
references illustrates how bounds on the solution are obtained
under a Foster-Lyapunov drift condition.  A similar strategy is
adopted here.    

In the following proposition, an existence-uniqueness result is
described for the fixed-point equation~\cref{eq:fixed-point-n}.  The technical
step in the proof involves a Foster-Lyapunov condition known as
DV(3)~\cite{konmey12a}.  The proof appears in~\cref{apdx:DV3}.


\begin{proposition}
	\label{prop:DV3}
	Consider the family of Markov operators $\{\Teps\}_{\epsilon>0}$ defined in \cref{eq:Teps-definition}.  Let $n \in \mathbb{N}$, $t \in (0,t_0)$, and $\epsilon = \frac{t}{n}$, with $t_0<\infty$.  
	Then there exists positive constants $a$, $b$, $R$, $\delta$, a probability measure $\nu$, and a number $n_0 \in \mathbb{N}$  such that for all $n>n_0$:
	\begin{subequations}
		\begin{align}
		\log(e^{-U_\epsilon} T_{\epsilon}^n e^{U_\epsilon}) &\leq -a tU_\epsilon+ bt,\label{eq:DV3}\\
		T_{\epsilon}^n \mathds{1}_A(x)&\geq \delta \nu(A) \mathds{1}_{\|x|\leq R} \quad \forall A\ \in \mathcal{B}(\Re^d).\label{eq:minorization}
		\end{align}
	\end{subequations}
	Consequently,
	\begin{romannum}
		\item The chain with transition kernel $T_{\epsilon}^n$ is geometrically ergodic with invariant density 
		\begin{equation}\label{eq:preps}
		\preps(x):=\frac{n_\epsilon(x)\pr(x)}{\int n_\epsilon(x)\pr(x)\ud x}.
		\end{equation}

		\item $T_{\epsilon}^n$ is reversible with respect to the density $\preps$ 
		It admits a spectral gap as a linear operator
                $T_{\epsilon}^n :L^2_0(\pr_{\epsilon})\to
                L^2_0(\pr_{\epsilon})$ that is uniform with respect to $\epsilon$.  The spectral gap is denoted
                as $\lambda$.  
		
		\item There exists a solution to~\cref{eq:fixed-point-n} with the bound 
		\begin{equation*}
		\|\phieps\|_{L^2(\preps)} \leq \frac{t\|h\|_{L^2(\preps)}}{\lambda}.
		\end{equation*}
	\end{romannum}
\end{proposition}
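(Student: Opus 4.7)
The plan is to establish the drift condition \cref{eq:DV3} and the minorization condition \cref{eq:minorization} directly via the Feynman-Kac representation \cref{eq:Teps-Feynman-Kac} from \cref{prop:Tepsn-convergence}, since the three consequences (i)--(iii) follow from these two bounds by standard Markov chain theory along the lines of Kontoyiannis-Meyn~\cite{konmey12a}. Crucially, both inequalities must hold with constants independent of $\epsilon$; it is this uniformity that ultimately yields the uniform spectral gap in (ii).

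For the drift \cref{eq:DV3}, substituting $f = e^{U_\epsilon}$ into \cref{eq:Teps-Feynman-Kac} gives the clean identity
\begin{equation*}
e^{-U_\epsilon(x)}\, T_\epsilon^n e^{U_\epsilon}(x) = \Expect\Bigl[e^{-\epsilon \sum_{k=0}^{n-1} W_\epsilon(B^x_{2k\epsilon})}\Bigr].
\end{equation*}
Under Assumption (A1), a direct computation using $W = |\nabla U|^2 - \Delta U$ with $U = V/2$ shows that $W$ grows quadratically and dominates $U$: there exist $a_1, b_1 > 0$ such that $W(x) \geq a_1 U(x) - b_1$ for all $x \in \Re^d$. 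By the expansion \cref{eq:Weps-W}, the same lower bound (with slightly adjusted constants) persists for $W_\epsilon$ once $\epsilon$ is small. Applying Jensen's inequality to the convex function $e^{-x}$, and rewriting $\epsilon \sum_{k=0}^{n-1} = t \cdot \tfrac{1}{n}\sum_{k=0}^{n-1}$ as a uniform average, reduces the estimate to bounding $\max_{0\le k<n} \Expect[e^{-a_1 t\, U(B^x_{2k\epsilon})}]$. Since $U$ is asymptotically a positive definite quadratic form and $B^x_{2k\epsilon}$ is Gaussian, this is a Gaussian expectation of the exponential of a quadratic, which is computed in closed form and majorized by $C\, e^{-a' t\, U(x)}$ with $a' > 0$ bounded away from zero uniformly for $s \in [0, 2t]$ and $t \in (0, t_0)$. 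Replacing $U$ by $U_\epsilon$ using \cref{eq:Ueps-U} and absorbing constants yields \cref{eq:DV3}. The delicate point, and the main technical obstacle, is ensuring that $a, b$ are independent of $n$ and $\epsilon$; this is precisely what the uniform quadratic comparison $W_\epsilon \geq a U_\epsilon - b$ and the uniform bound on the Gaussian integral over the compact time window $[0, 2t_0]$ provide.

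The minorization \cref{eq:minorization} is obtained from the same Feynman-Kac representation by restricting the expectation to Brownian trajectories that stay inside a large ball $B_{R'}$. On that event, $U_\epsilon$ and $W_\epsilon$ are uniformly bounded in $\epsilon$ (by the expansions \cref{eq:Ueps-U} and \cref{eq:Weps-W}), so $e^{U_\epsilon(x)}$, $e^{-U_\epsilon(B^x_{2n\epsilon})}$, and $e^{-\epsilon \sum W_\epsilon(B^x_{2k\epsilon})}$ are each bounded below by positive constants depending only on $R, R', t_0$. A standard Gaussian heat-kernel lower bound, together with a Brownian-bridge estimate controlling the probability that the path exits $B_{R'}$ during time $2t$, then yields $T_\epsilon^n \mathds{1}_A(x) \geq \delta \int_A \mathds{1}_{B_{R'}}(y)\,dy$ for $|x| \leq R$ and any $A \in \mathcal{B}(\Re^d)$, which is \cref{eq:minorization} with $\nu$ proportional to the Lebesgue restriction to $B_{R'}$.

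Given these two uniform bounds, the three consequences follow. (i) is the classical geometric ergodicity theorem obtained from DV(3) plus minorization; the invariant density of $T_\epsilon$ is identified by inspection as $\rho_\epsilon = n_\epsilon \rho / \int n_\epsilon \rho$. For (ii), reversibility of $T_\epsilon$ (and hence of $T_\epsilon^n$) with respect to $\rho_\epsilon$ is immediate from the symmetry $k_\epsilon(x,y) = k_\epsilon(y,x)$ built into \cref{eq:Teps-definition} combined with the formula for $\rho_\epsilon$, and the uniform spectral gap $\lambda > 0$ for $T_\epsilon^n$ on $L^2_0(\rho_\epsilon)$ follows because every constant in DV(3) and minorization is independent of $\epsilon$. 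For (iii), iterating \cref{eq:kernel-approx-fixed-pt} gives $(\mathrm{Id} - T_\epsilon^n)\phi_\epsilon = \sum_{k=0}^{n-1} \epsilon\, T_\epsilon^k(h - \hat{h}_{\rho_\epsilon})$; inverting using $\|(\mathrm{Id} - T_\epsilon^n)^{-1}\|_{L^2_0(\rho_\epsilon)} \leq 1/\lambda$, the non-expansion $\|T_\epsilon^k\|_{L^2_0(\rho_\epsilon)} \leq 1$ of a reversible Markov operator, and $n\epsilon = t$ yields the stated bound $\|\phi_\epsilon\|_{L^2(\rho_\epsilon)} \leq t \|h\|_{L^2(\rho_\epsilon)}/\lambda$.
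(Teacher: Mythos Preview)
Your proposal is correct and the minorization argument and the derivation of consequences (i)--(iii) match the paper's proof essentially line for line. The drift bound~\cref{eq:DV3}, however, is obtained by a genuinely different technique. The paper, after reducing to $\Expect[e^{-\epsilon\sum_{k=0}^{n-1}(\alpha|B^x_{2k\epsilon}|^2-\beta)}]$ via the lower bound $W_\epsilon(x)\geq\alpha|x|^2-\beta$, evaluates this joint Gaussian expectation \emph{exactly} through a two-term recursion for the coefficients $(\alpha_m,\beta_m)$ in $e^{-\alpha_m|x|^2+\beta_m}$, and then bounds the recursion to obtain $\alpha_n\geq \alpha t\,e^{-4\alpha t^2}$ and $\beta_n\leq \beta t$. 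Your route---writing $\epsilon\sum_{k=0}^{n-1}=t\cdot\frac{1}{n}\sum_{k=0}^{n-1}$, applying Jensen to the convex map $u\mapsto e^{-ctu}$ to decouple the sum into an average of single-time terms, and then bounding each $\Expect[e^{-ct|B^x_s|^2}]$ by a closed-form Gaussian integral uniformly over $s\in[0,2t]$---is more elementary, avoids the recursion entirely, and in fact delivers a slightly sharper constant $\frac{\alpha}{1+4\alpha t^2}\geq \alpha e^{-4\alpha t^2}$ in front of $t|x|^2$. The paper's recursion has the minor advantage of giving the exact value of the Gaussian expectation (potentially reusable elsewhere), whereas your argument is shorter and makes the uniformity in $\epsilon$ more transparent.
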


The proof of the following main result appears
in~\cref{apdx:bias}.  

\begin{theorem} 
Suppose the assumptions (A1)-(A2) hold for the density $\rho$ and the
function $h$, and $\phi$ denotes the exact solution
of~\cref{eq:exact-fixed-pt}. 
Consider the approximation of this problem
defined by the (diffusion-map)
fixed-point equation~\cref{eq:kernel-approx-fixed-pt}.  For the approximate problem:
\begin{romannum}
\item{\textbf{Existence-Uniqueness:}} For each fixed $\epsilon>0$, there exists a unique solution
  $\phieps$.
\item{\textbf{Convergence:}} In the asymptotic limit as $\epsilon \to 0$
\begin{equation}\label{eq:bias-bound}
\|\phieps- \phi\|_{L^2(\preps)} \leq O(\epsilon).
\end{equation}
\end{romannum}
\label{thm:bias}
\end{theorem}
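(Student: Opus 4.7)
The plan is to handle the two parts separately, drawing on Propositions~\ref{prop:DV3} and~\ref{prop:Tepsn-convergence}. For part~(i), fix any $\epsilon>0$ and choose $n\in\mathbb{N}$ with $n>n_0$ and $t:=n\epsilon\in(0,t_0)$. Then Proposition~\ref{prop:DV3}(iii) already gives a solution to the iterated equation~\eqref{eq:fixed-point-n}, which is equivalent to~\eqref{eq:kernel-approx-fixed-pt}. Uniqueness in $L^2_0(\preps)$ follows from Proposition~\ref{prop:DV3}(ii): the uniform spectral gap makes $I-\Teps^n$ invertible on $L^2_0(\preps)$, and since $\Teps$ is self-adjoint on $L^2(\preps)$, $(I-\Teps^n) v=0$ for $v\in L^2_0(\preps)$ forces $v=0$, whence $I-\Teps$ is also injective.

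For part~(ii), I would fix a constant $t\in(0,t_0)$ independent of $\epsilon$ and set $n=\lfloor t/\epsilon\rfloor$. Iterating both fixed-point equations gives
\begin{equation*}
\phi = P_t \phi + \int_0^t P_s(h-\hat{h}_\pr)\,\ud s, \qquad
\phieps = \Teps^n \phieps + \sum_{k=0}^{n-1}\epsilon\,\Teps^k(h-\hat{h}_{\preps}).
\end{equation*}
Since $\phi$ is only normalized against $\pr$, I would subtract the constant $c:=\int \phi\,\preps\,\ud x$ (the exact equation is unchanged because $P_t$ preserves constants) so that $\phi-c\in L^2_0(\preps)$. Subtracting the two equations yields
\begin{equation*}
(I-\Teps^n)\bigl(\phieps-(\phi-c)\bigr) = (\Teps^n-P_t)(\phi-c)\;+\;\Bigl[\sum_{k=0}^{n-1}\epsilon\,\Teps^k(h-\hat{h}_{\preps})-\int_0^t P_s(h-\hat{h}_\pr)\,\ud s\Bigr].
\end{equation*}
Applying $(I-\Teps^n)^{-1}$, whose $L^2_0(\preps)$ operator norm is bounded by $1/\lambda$ uniformly in $\epsilon$ by Proposition~\ref{prop:DV3}(ii), reduces the problem to bounding each piece on the right by $O(\epsilon)$.

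The first piece, $(\Teps^n-P_t)(\phi-c)$, is $O(\epsilon)$ by Proposition~\ref{prop:Tepsn-convergence}(iii) (with $t$ fixed and $n=t/\epsilon$), once the required $L^4(\pr)$ regularity of $\phi$ and $\nabla\phi$ is verified by standard elliptic regularity for $-\Delta_\pr\phi=h-\hat{h}_\pr$ under Assumptions~(A1)--(A2). The bracketed piece I would split as: (a)~the summed semigroup errors $\sum_k\epsilon(\Teps^k-P_{k\epsilon})(h-\hat{h}_\pr)$; using Proposition~\ref{prop:Tepsn-convergence}(iii) with block length $k\epsilon$ gives each term $O(\sqrt{\epsilon/k})$ and the sum $\epsilon\sum_{k=1}^n\sqrt{\epsilon/k}=O(\sqrt{t}\,\epsilon)$, which is exactly the desired order; (b)~the Riemann-sum discretization error $\sum_k\epsilon P_{k\epsilon}(h-\hat{h}_\pr)-\int_0^t P_s(h-\hat{h}_\pr)\,\ud s$, of order $\epsilon$ by Lipschitz continuity of $s\mapsto P_s(h-\hat{h}_\pr)$ in $L^2(\pr)$ (derivative $\Delta_\pr P_s h$, bounded via the $H^1$ estimate); and (c)~the mean-correction $t(\hat{h}_\pr-\hat{h}_{\preps})$, which is $O(\epsilon)$ once $\|\preps-\pr\|_{L^1}=O(\epsilon)$ is extracted from the expansion of $U_\epsilon$ in Proposition~\ref{prop:Tepsn-convergence}(ii). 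Combining, $\|\phieps-(\phi-c)\|_{L^2(\preps)}=O(\epsilon)$, and $|c|=|\int\phi(\preps-\pr)\,\ud x|=O(\epsilon)$ by the same density estimate and Cauchy--Schwarz, giving the claimed bound via the triangle inequality.

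The main obstacle is the interplay between the two invariant measures $\pr$ and $\preps$: the spectral-gap bound from Proposition~\ref{prop:DV3} lives on $L^2_0(\preps)$, whereas the semigroup estimates of Proposition~\ref{prop:Tepsn-convergence} and the exact solution $\phi$ are naturally phrased against $\pr$. This forces the constant shift $c$ and a careful conversion between $L^2(\pr)$ and $L^2(\preps)$ norms, which is feasible because $\preps/\pr=1+O(\epsilon)$ by~\eqref{eq:preps} and~\eqref{eq:Ueps-U}. A subtler point is the summed semigroup error: the naive per-term bound $O(\sqrt{\epsilon/k})$ gives the right total $O(\epsilon)$ only because the $L^4$-norms of $P_{k\epsilon}h$ and its gradient are controlled uniformly in $k$ (using contractivity of $P_s$ on $L^p(\pr)$ under Assumption~(A1)); losing this uniformity would spoil the scaling.
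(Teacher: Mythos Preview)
Your approach is correct and very close in spirit to the paper's, but the paper introduces a clean intermediate object that you bypass. Specifically, the paper defines $\tilde{\phi}$ as the solution of the \emph{discrete-time exact-semigroup} equation $\tilde{\phi}=P_\epsilon\tilde{\phi}+\epsilon(h-\hat{h}_\pr)$ and splits $\|\phieps-\phi\|\le\|\phieps-\tilde{\phi}\|+\|\tilde{\phi}-\phi\|$. The first piece is handled exactly as you describe (iterate $n=\lfloor 1/\epsilon\rfloor$ times, invert $I-\Teps^n$ via Proposition~\ref{prop:DV3}, bound $(\Teps^n-P_\epsilon^n)\tilde{\phi}$, $\sum_k\epsilon(\Teps^k-P_\epsilon^k)h$, and the mean shift $n\epsilon(\hat{h}_\pr-\hat{h}_{\preps})$ via Proposition~\ref{prop:Tepsn-convergence} and the $U_\epsilon$ expansion). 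The second piece, however, is computed directly from the spectral representation: $\tilde{\phi}-\phi=\sum_m\bigl(\tfrac{\epsilon}{1-e^{-\epsilon\lambda_m}}-\tfrac{1}{\lambda_m}\bigr)\langle h,e_m\rangle e_m$, which gives $\|\tilde{\phi}-\phi\|_{L^2(\pr)}\le\epsilon\|h\|_{L^2(\pr)}$ in one line.

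This intermediate $\tilde{\phi}$ buys two simplifications you had to work for: your Riemann-sum error~(b) disappears entirely (both $\phieps$ and $\tilde{\phi}$ use the same discrete forcing $\epsilon\sum_k$), and the mean-zero normalization issue is milder because $\tilde{\phi}\in L^2_0(\pr)$ already, so the constant shift $c$ is absorbed into the $\tilde{\phi}$--$\phi$ comparison rather than carried through the spectral-gap inversion. Your direct route works, but it trades the one-line spectral calculation for the Riemann-sum bound and the explicit $c=\int\phi\,\preps$ bookkeeping. Conversely, your version has the advantage that the $L^4$ regularity needed in Proposition~\ref{prop:Tepsn-convergence}(iii) is required of $\phi$ rather than of $\tilde{\phi}$; the paper needs $\|\tilde{\phi}\|_{L^4(\pr)}+\|\nabla\tilde{\phi}\|_{L^4(\pr)}<\infty$, which it does not explicitly verify.
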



\subsection{Variance}

The analysis of the variance concerns the (empirical) fixed-point
equation~\cref{eq:empirical-fixed-pt} whose solution is denoted as
$\phiepsN$.  The parameter $\epsilon$ is assumed to be positive and
fixed and $N$ is assumed to be finite but large.    

The existence-uniqueness of $\phiepsN$ has already been shown as part
of Prop.~\ref{prop:N}.  The convergence has only been shown below only for the
case where the density has a compact support.      



\noindent
{\bf Assumption A3:} The distribution $\pr$ has compact support given by $\Omega \subset \Re^d$.

\begin{theorem} \label{thm:conv-variance}
Suppose the assumptions (A2)-(A3) hold for the density $\rho$ and the
function $h$, and $\phieps$ denotes the solution
of the (kernel) fixed-point equation~\cref{eq:kernel-approx-fixed-pt}  for a fixed
positive parameter $\epsilon$.  Consider the approximation
of this problem defined by the (empirical) fixed-point
equation~\cref{eq:empirical-fixed-pt}.  For the approximate problem:
\begin{romannum}
\item{\textbf{Existence-Uniqueness:}} For each finite $N$, there
  exists (almost surely) a unique solution
  $\phiepsN$.
\item{\textbf{Convergence:}} The approximate solution
  $\phiepsN$ converges to the kernel solution $\phieps$   
	\begin{equation}\label{eq:phiepsN_phieps_conv_result}
	\lim_{N \to \infty} \|\phiepsN -\phieps\|_{L^\infty(\Omega)} = 0,\quad \text{a.s.}
	\end{equation}
\end{romannum}
\label{thm:variance}
\end{theorem}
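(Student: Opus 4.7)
The plan is to exploit the compactness hypothesis (A3) to obtain a uniform-in-$N$ $L^\infty$ spectral gap for the empirical Markov matrix $\Ten$, and then to invert the identity satisfied by $\psi := \phiepsN - \phieps$ while bounding the residual via \cref{prop:TepsN-convergence} augmented by an equicontinuity argument. Part (i) (existence and uniqueness) is already furnished by \cref{prop:N}(iii). For part (ii), normalize the two solutions so that $\hat{\phieps}_{\pr_\epsilon}=0$ and $\pi(\phivec)=0$, and subtract \cref{eq:kernel-approx-fixed-pt} from \cref{eq:empirical-fixed-pt} to obtain, for every $x \in \Omega$,
\[
\psi(x) = \TepsN \psi(x) + \eta_N(x), \qquad \eta_N := (\TepsN - \Teps)\phieps + \epsilon\bigl(\hat{h}_{\pr_\epsilon} - \pi(h)\bigr).
\]
Using the $\pi$-invariance of $\Ten$ together with the defining fixed-point equation for $\phieps$, a short calculation gives $\sum_i \pi_i\,\eta_N(X^i)=0$, so the restriction of this identity to the particle set is a genuine discrete Poisson equation for $\psi|_{\{X^i\}}$ with respect to $\Ten$.

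Under (A3), $\geps$ is bounded below on $\Omega\times\Omega$ by some $m_\epsilon>0$. Tracing this lower bound through the definitions of $\kepsN$, $\nepsN$, and $\Ten$ yields $\Ten_{ij}\geq m_\epsilon^2/N$ for all $i,j$ and every $N$. This is a one-step Doeblin minorization with respect to the uniform measure on the sample indices with constant $m_\epsilon^2$ independent of $N$, and a standard coupling argument gives $\|\Ten^n f\|_{\infty}\leq (1-m_\epsilon^2)^n \|f\|_\infty$ for every $f$ with $\pi(f)=0$. Hence the Neumann series $(I-\Ten)^{-1}=\sum_{n\geq 0}\Ten^n$ is well defined on zero-$\pi$-mean vectors with operator norm at most $m_\epsilon^{-2}$, uniformly in $N$.

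Next, we control $\eta_N$ in sup-norm on $\Omega$. The scalar term $\hat{h}_{\pr_\epsilon}-\pi(h)$ converges to zero almost surely by the law of large numbers applied to the reweighted empirical average defining $\pi(h)$. For the operator term, \cref{prop:DV3} combined with boundedness of $h$ on the compact set $\Omega$ gives that $\phieps$ is bounded, while the smoothing property of $\Teps$ yields continuity. The pointwise a.s.\ convergence $\TepsN\phieps(x)\to \Teps\phieps(x)$ of \cref{prop:TepsN-convergence}(i) is then upgraded to uniform convergence on $\Omega$ by observing that $\{\TepsN\phieps\}_N$ is uniformly bounded and equicontinuous -- the required Lipschitz constants, independent of $N$, follow from the uniform continuity of $\geps$ and its lower bound on $\Omega\times\Omega$. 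Arzelà--Ascoli then gives $\|(\TepsN-\Teps)\phieps\|_{L^\infty(\Omega)}\to 0$ a.s.

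Setting $c_N := \sum_i \pi_i \psi(X^i) = -\sum_i \pi_i \phieps(X^i)$ and $\psi_0 := \psi - c_N$, the restriction $\psi_0|_{\{X^i\}}$ lies in the zero-$\pi$-mean subspace and equals $(I-\Ten)^{-1}\eta_N|_{\{X^i\}}$, so $\sup_i|\psi_0(X^i)|\leq m_\epsilon^{-2}\|\eta_N\|_{L^\infty(\Omega)}$. A double law-of-large-numbers argument applied to the ratio defining $\pi$ gives $c_N\to 0$ a.s., while the identity $\psi(x)=\TepsN\psi(x)+\eta_N(x)$ together with the averaging property of $\TepsN$ yields $\|\psi\|_{L^\infty(\Omega)}\leq \sup_i|\psi(X^i)|+\|\eta_N\|_{L^\infty(\Omega)}$, which establishes \cref{eq:phiepsN_phieps_conv_result}. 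The main obstacle is the passage from the $L^2$ and pointwise convergence in \cref{prop:TepsN-convergence} to uniform convergence of $(\TepsN-\Teps)\phieps$ on $\Omega$; this is where (A3) plays the decisive role and where the bulk of the technical work resides.
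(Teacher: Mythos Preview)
Your argument is correct and takes a genuinely different route from the paper's. The paper works in the collectively compact operator framework of Anselone: it verifies that $\{\TepsN\}$ is collectively compact on $C_b(\Omega)$, that $\TepsN f\to\Teps f$ uniformly (via equicontinuity and Arzel\`a--Ascoli, exactly as you do), and that $(I-\Teps)^{-1}$ is bounded on $C_0(\Omega)$ via the Fredholm alternative; it then invokes a black-box approximation theorem for integral equations~\cite[Thm.~7.6.6]{hutson2005}. You instead obtain a \emph{uniform-in-$N$} Doeblin minorization $\Ten_{ij}\ge m_\epsilon^2/N$ directly from the positive lower bound on $\geps$ over $\Omega\times\Omega$, yielding an explicit $L^\infty$ spectral gap for every empirical matrix, and then push the residual $\eta_N$ through $(I-\Ten)^{-1}$ by hand. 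Your approach is more self-contained, produces an explicit constant $m_\epsilon^{-2}$ in the final bound, and would immediately give a convergence \emph{rate} once a uniform rate for $\|\eta_N\|_{L^\infty(\Omega)}$ is available (cf.\ \cref{rem:variance-error}); the paper's route is cleaner conceptually but hides the quantitative content inside the cited theorem. One minor point: your appeal to \cref{prop:DV3} for the boundedness of $\phieps$ is a slight misattribution, since that proposition is proved under (A1) rather than (A3); under (A3) the same Doeblin argument you use for $\Ten$ applies verbatim to $\Teps$ and gives $\phieps\in C_b(\Omega)$ directly.
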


\todo{
The proof of the convergence $\phiepsN \to \phieps$ is based on 
classical results in the numerical analysis of integral equations on a
grid~\cite{anselone1971collectively,atkinson1976survey}. It relies on the verification of the following three
conditions:
\begin{romannum}
	\item The family of operators
	$\{\TepsN\}_{N=1}^\infty$ is collectively compact 
	as linear operators on $C_b(\Omega)$.
	\item For any function $f \in C_b(\Omega)$,
	\begin{equation}\label{eq:TepsN-unif-convergence-1}
	\lim_{N\to \infty} \|\TepsN f - \Teps f\|_{L^\infty(\Omega)} = 0,\quad \text{a.s.}
	\end{equation}
	\item The inverse  $(I-\Teps)^{-1}$ exists and it is a bounded on $C_0(\Omega):=\{f\in C_b(\Omega); \hat{f}_\pr = 0\}$.
\end{romannum} 
Once these three conditions have been verified, the
convergence result~\cref{eq:phiepsN_phieps_conv_result} follows from a
standard result in the approximation theory of the numerical solutions of integral
equations~\cite[Thm. 7.6.6]{hutson2005}. The proof appears in \cref{apdx:variance}.

\begin{remark}[Convergence rate] \label{rem:variance-error}
The result in~\cref{thm:variance} establishes asymptotic convergence
of the variance error to zero.  However, it does not provide an
explicit form for the convergence rate.  It is possible to obtain an
explicit form based upon a convergence rate estimate for the uniform
convergence~\eqref{eq:TepsN-unif-convergence-1}.  The latter is
difficult because the existing result in~\cite{gine2006empirical}
holds only under rather strong regularity conditions on $f$ and assumes that
the distribution $\pr$ is uniform. 

Based upon the approximation result \cref{prop:TepsN-convergence},
suppose a convergence rate holds
for~\eqref{eq:TepsN-unif-convergence-1} with order
$O(\frac{1}{N^{1/2}\epsilon^{d/2}})$.  In this case, it is
straightforward to derive the following explicit form of the
convergence rate for the variance:  
\begin{equation*}
\|\phieps - \phiepsN\|_{L^\infty(\Omega)} \leq O(\frac{1}{N^{1/2}\epsilon^{1+d/2}}).
\end{equation*}
The validity and tightness of this bound is studied using numerical experiments in \cref{sec:numerics}. 
\end{remark}



\begin{remark} (Unbounded domain)
Analysis of the variance error for the case where the support of $\pr$
is unbounded has proved to be difficult. In the unbounded case, it is
more appropriate to consider $\Teps$ and $\TepsN$ as linear operators
on $L^2(\pr)$.  
Following the same approach
as used in the proof of~\cref{thm:variance}, one would need to verify the
three conditions noted above.  However, for the unbounded case, we
could not verify the condition (i) that $\{\TepsN\}_{N=1}^\infty$ is
collectively compact on $L^2(\pr)$.  An alternative approach is to
follow the spectral method as outlined
in~\cite{koltchinskii2000random}.  In this approach, one examines the
convergence of empirical matrix $[k(X^i,X^j)]_{i,j=1}^N$ where
$k(\cdot,\cdot)$ is a given symmetric kernel. However, this approach
does not directly apply to the analysis of the empirical operator $\TepsN$.
This is because the form of the kernel $\kepsN(\cdot,\cdot)$, as it is
used in the definition of $\TepsN$, is not explicitly given.  It too
must be empirically approximated as a ratio whose convergence analysis
has proved to be rather challenging. 
\end{remark}
}



\subsection{Relationship to the constant gain approximation}

Although the convergence and error analysis pertains to the
$\epsilon\downarrow 0$ limit, an important property of
the  diffusion-map approximation is that the numerical procedure yields a
unique solution for arbitrary values of $\epsilon$ (see
\cref{prop:N}).  In fact, more can be said: one recovers the constant gain
approximation formula in the $\epsilon\rightarrow\infty$ limit.  

Before stating the result, it is useful to recall the three
formulae for the gain: 
\begin{romannum}
\item{\textbf{Exact formula:}} $\K = \nabla\phi$ is defined using the
  exact solution $\phi$;
\item{\textbf{Kernel formula:}} $\Keps$ is defined using the solution
  $\phieps$ to the (diffusion-map) approximation fixed-point equation:
\begin{equation}\label{eq:Keps-def}
\K_\epsilon(x) := \nabla_x \left[
\frac{1}{n_\epsilon(x)}\int  \keps(x,y)
(\phieps(y) + \epsilon  h(y) ) \rho_\epsilon(y) \ud y\right] .
\end{equation}

\item{\textbf{Empirical formula:}} $\KepsN$ is the empirical version of
  the kernel formula.  It was defined
  in~\cref{eq:empirical_formula_for_gain} using the solution ${\sf \Phi}$
of the finite-dimensional fixed-point
problem.  
\end{romannum}

\medskip

The proof of the following Proposition appears in the~\cref{Sec:prop:convergence_to_constant_gain}.         

\begin{proposition}\label{prop:convergence_to_constant_gain}
Consider the fixed-point problems~\cref{eq:kernel-approx-fixed-pt} and~\cref{eq:empirical-fixed-pt} in the limit
as $\epsilon\rightarrow\infty$.     
\begin{romannum}
\item The kernel formula of the gain is given by
\[
\lim_{\epsilon\rightarrow\infty} \K_{\epsilon} = \int (h(x)-\hat{h}_\pr)x \rho(x) \ud x. 
\]
\item For any finite $N$, the empirical formula of the gain is given by
\[
\lim_{\epsilon\rightarrow\infty} \K_{\epsilon}^{(N)} =
\frac{1}{N}\sum_{i=1}^N(h(X^i)-\hat{h}_\pr^{(N)})X^i \quad \text{a.s.}
\]
\end{romannum}
\label{prop:eps-infty}
\end{proposition}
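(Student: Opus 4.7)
The plan is to expand all kernel quantities as power series in $1/\epsilon$ for large $\epsilon$, identify the leading-order behavior of the fixed-point solution, and then differentiate. Starting from the Taylor expansion $g_\epsilon(x,y) = 1 - |x-y|^2/(4\epsilon) + O(1/\epsilon^2)$, a short calculation yields $k_\epsilon(x,y) = 1 + b(x,y)/\epsilon + O(1/\epsilon^2)$ and $n_\epsilon(x) = 1 + O(1/\epsilon)$, where $b(x,y) := \tfrac{1}{8}(-2|x-y|^2 + a(x) + a(y))$ and $a(x) := \int |x-y|^2 \rho(y)\ud y$. Consequently $T_\epsilon f(x) = \hat f_\rho + \frac{1}{\epsilon}\bigl[\int b(x,y)f(y)\rho(y)\ud y - \hat f_\rho \int b(x,y)\rho(y)\ud y\bigr] + O(1/\epsilon^2)$, so $T_\epsilon$ converges to the rank-one projection onto constants, and likewise $\rho_\epsilon = \rho + O(1/\epsilon)$.

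For part (i), I would pin down the size of $\phi_\epsilon$ by rescaling $\phi_\epsilon = \epsilon \psi_\epsilon$, which converts \cref{eq:kernel-approx-fixed-pt} into $\psi_\epsilon = T_\epsilon \psi_\epsilon + (h - \hat h_{\rho_\epsilon})$. Since the limiting operator is the rank-one projection onto constants, a Neumann-series argument on the complement of constants, combined with the zero-mean normalization $\int \psi_\epsilon \rho_\epsilon\ud x = 0$ inherited from $\phi_\epsilon \in H_0^1(\rho_\epsilon)$, gives $\psi_\epsilon \to h - \hat h_\rho$ as $\epsilon \to \infty$, so that $\phi_\epsilon = \epsilon(h - \hat h_\rho) + O(1)$. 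Substituting into the kernel gain formula and writing $F_\epsilon(x) := T_\epsilon(\phi_\epsilon + \epsilon h)(x)$ (the $\rho$ versus $\rho_\epsilon$ distinction inside \eqref{eq:Keps-def} contributes only at order $1/\epsilon$), one obtains $F_\epsilon(x) = \epsilon \hat h_\rho + G(x) + O(1/\epsilon)$ for an explicit $O(1)$ function $G$. The constant part $\epsilon \hat h_\rho$ has vanishing gradient; differentiating $G$ using $\nabla_x b(x,y) = -(x-y)/2 + (x - \hat y_\rho)/4$, with $\hat y_\rho := \int y \rho(y)\ud y$, and simplifying yields exactly $\int (h(y) - \hat h_\rho)\, y\, \rho(y)\ud y$.

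Part (ii) follows the same strategy but is essentially algebraic because of the explicit linear formula \cref{eq:gain-linear-form}--\cref{eq:s-r-definition}. Since $g_\epsilon(X^i,X^j) \to 1$ almost surely as $\epsilon \to \infty$, one has $k_\epsilon(X^i,X^j) \to 1$, $\Ten_{ij} \to 1/N$, $\pi_i \to 1/N$, and $\pi(\hvec) \to \hat h_\rho^{(N)}$. Rescaling $\phivec = \epsilon \Psi$ in \cref{eq:finite-N-fixed-pt} and passing to the limit using the invertibility of $(I - \Ten)$ on $L^2_0(\pi)$ guaranteed by \cref{prop:N}(ii) gives $\Psi_j \to h(X^j) - \hat h_\rho^{(N)}$, hence $\phivec_j = \epsilon(h(X^j) - \hat h_\rho^{(N)}) + O(1)$. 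Substituting into $r_j = \phivec_j + \epsilon h(X^j)$ shows $r_j - \sum_k \Ten_{ik} r_k = 2\epsilon(h(X^j) - \hat h_\rho^{(N)}) + O(1)$, whence $s_{ij} \to \frac{1}{N}(h(X^j) - \hat h_\rho^{(N)})$, and the claim follows by summing $\K^i = \sum_j s_{ij} X^j$.

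The main obstacle is the Neumann-series step in part (i): because the spectral gap of $T_\epsilon$ on mean-zero functions degrades to $\Theta(1/\epsilon)$ for large $\epsilon$ (in contrast to the uniform gap of \cref{prop:DV3}, which targets small $\epsilon$), one must expand $(I - T_\epsilon)^{-1}$ around its limiting rank-one structure and track two orders in $1/\epsilon$ — leading order to identify $\psi_\infty = h - \hat h_\rho$, and first subleading order to confirm that the $O(1)$ correction to $\phi_\epsilon$ does not contribute to the limit of the gradient. The discrete counterpart in part (ii) reduces to a finite-dimensional matrix perturbation that is routine once the same structural picture is recognized.
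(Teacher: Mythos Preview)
Your proposal is correct and follows the same overall strategy as the paper: expand $g_\epsilon$, $k_\epsilon$, $T_\epsilon$ in $1/\epsilon$, identify $\phi_\epsilon = \epsilon(h-\hat h_\rho) + O(1)$ via a Neumann-type argument, and pass to the limit in the gain formula.  The discrete part (ii) is handled identically in both.

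The one genuine difference is how the gradient is evaluated in part (i).  You differentiate the expanded $T_\epsilon$ directly via $\nabla_x b(x,y)$, which forces you to track the $O(1)$ term $G(x)$ explicitly and to worry whether the $O(1)$ correction to $\phi_\epsilon$ contaminates it---hence your (legitimate) concern about needing a subleading order and about the degraded spectral gap.  The paper sidesteps all of this by using the closed-form identity
\[
\nabla_x\, T_\epsilon r(x) \;=\; \frac{1}{2\epsilon}\Bigl[T_\epsilon(\theta\cdot r)(x) - T_\epsilon(\theta)(x)\, T_\epsilon(r)(x)\Bigr],\qquad \theta(x)=x,
\]
which follows from the Gaussian kernel structure (the same computation as in Appendix~\ref{apdx:gain-linear-form}).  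With this identity, only the leading-order facts $\phi_\epsilon/\epsilon \to h-\hat h_\rho$ and $T_\epsilon f \to \int f\rho$ are needed: the $1/(2\epsilon)$ prefactor automatically kills the $O(1)$ remainder in $\phi_\epsilon$, so no second-order expansion and no spectral-gap perturbation argument are required.  Your route works, but the paper's identity reduces the computation to a couple of lines.
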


This result serves to highlight the connection between the FPF and the
EnKF: With the diffusion map approximation of the gain, the FPF approaches EnKF in the limit of large $\epsilon$.  The parameter
$\epsilon$ can then be regarded as the tuning parameter to ``improve''
the gain.  Of course, for any finite value of $N$, this can only be
done up to a point -- where variance becomes dominant (see \cref{fig:kernel-approx}).

\section{Numerics}
\label{sec:numerics}


\subsection{Example - the vector case} 
\label{ex:vector}

A vector generalization of
the scalar example in \cref{ex:scalar} is obtained by considering the 
following form of the probability density function in $d$-dimensions:
\begin{equation*}
\pr(x) = \rho_{b}(x_1) \prod_{n=2}^d\rho_g(x_n),\quad \text{for}\quad x=(x_1,x_2,\ldots,x_d) \in \Re^d,
\end{equation*}
where $\rho_b$ is the bimodal distribution
$\frac{1}{2}\mathcal{N}(-1,\sigma^2)
+\frac{1}{2}\mathcal{N}(+1,\sigma^2)$ introduced in \cref{ex:scalar}, and $\rho_g$ is the Gaussian distribution~$\mathcal{N}(0,\sigma^2)$. 
Also suppose the function $h(x)=x_1$. The simple example is
illustrative of realistic application scenarios where the density has
non-Gaussian features along certain (not necessarily apriori known)
low-dimensional subspace.  The  directions orthogonal to this
subspace are modelled here as Gaussian noise.  

For this problem,  the exact gain function is easily obtained as
\[
{\sf K}_{\text{exact}}(x) = ({\sf K}_{\text{exact}}(x_1),0,\ldots,0),
\] 
where the function ${\sf K}_{\text{exact}}(x_1)$ is given by the
formula~\cref{eq:scalar} in \cref{ex:scalar}.  The exact solution is
used to compute error properties as dimension increases.  


The diffusion-map algorithm (\cref{alg:kernel}) is simulated to
approximate the gain function for this problem. The number of iterations in~\cref{alg:kernel} set to  $L=10^3$. For each particle
$X^i=(X^i_1,\ldots,X^i_d)$, the first coordinate $X^i_1 \overset{\text{i.i.d}}{\sim}
\frac{1}{2}\mathcal{N}(-1,\sigma^2)
+\frac{1}{2}\mathcal{N}(+1,\sigma^2)$ and other the coordinates $X^i_n
\overset{\text{i.i.d}}{\sim} \mathcal{N}(0,\sigma^2)$ for $n=2,\hdots,d$.  The constant gain approximation is evaluated according to the explicit integral formula~\eqref{eq:const-gain-approx}.  

 \cref{fig:numerics-all} depicts the m.s.e~\cref{eq:MC_error} computed from running $M=100$ simulations.
 A summary of these results is as follows:

\begin{enumerate}
	\item \cref{fig:numerics-all}-(a) depicts the error as
          a function of the parameters $\epsilon$ and $d$ for a fixed
          number of particles $N=1000$.  Also depicted is the error
          with the constant gain approximation.  The constant gain
          error serves here as baseline.

          For large values of $\epsilon$, the bias error is dominant, and as $\epsilon \to \infty$ the  error
          asymptotes to 
          the error for the constant-gain approximation.  This is because (see
          \cref{prop:convergence_to_constant_gain}) the diffusion map gain
          approaches the constant gain as $\epsilon\rightarrow\infty$.  \todo{
          For small values of $\epsilon$, the variance error dominates. According to~\cref{rem:variance-error}, the upper-bound for m.s.e is expected te be of the order $O(\frac{1}{N \epsilon^{d+2}})$. 
          However, the numerical error in~\cref{fig:numerics-all}-(a)
          is observed to be $O(\frac{1}{N
            \epsilon^{0.16d+0.3}})$. Therefore, the upper-bound
          in~\cref{rem:variance-error} is not tight for this specific
          problem.} 

	\item \cref{fig:numerics-all}-(b) depicts the
          bias-variance trade-off as a
          function of number of particles $N$ for the fixed $d=1$.  It is not a surprise
          that the error gets better, for all choices of $\epsilon$, as
          the number of particles increase.  However, the optimal
          value of $\epsilon$ -- at which the error is the smallest --
          is relatively insensitive to changes in $N$. 
	\item \cref{fig:numerics-all}-(c) depicts the error as
          function of $N$ for different values of $\epsilon$.  The
          dimension $d=1$ is fixed.  The
          error goes down as $O(\frac{1}{N})$ and asymptotes to
          the $O(\epsilon)$ bias.  The $O(\frac{1}{N})$ is due to the variance error obtained in~\cref{prop:TepsN-convergence} and $O(\epsilon)$ bias error is
          consistent with the conclusion of the \cref{thm:bias}.   
	\item \cref{fig:numerics-all}-(d)  depicts the run time
          comparison between the diffusion-map algorithm and the constant gain
          algorithm.  The scaling for the diffusion-map  algorithm is $O(N^2)$
          which is significantly more expensive than the $O(N)$
          scaling of the constant gain approximation.  
\end{enumerate}

\begin{figure}[t]
	\centering
	\begin{tabular}{cc}
		\subfloat[]{
			\includegraphics[width=0.45\columnwidth]{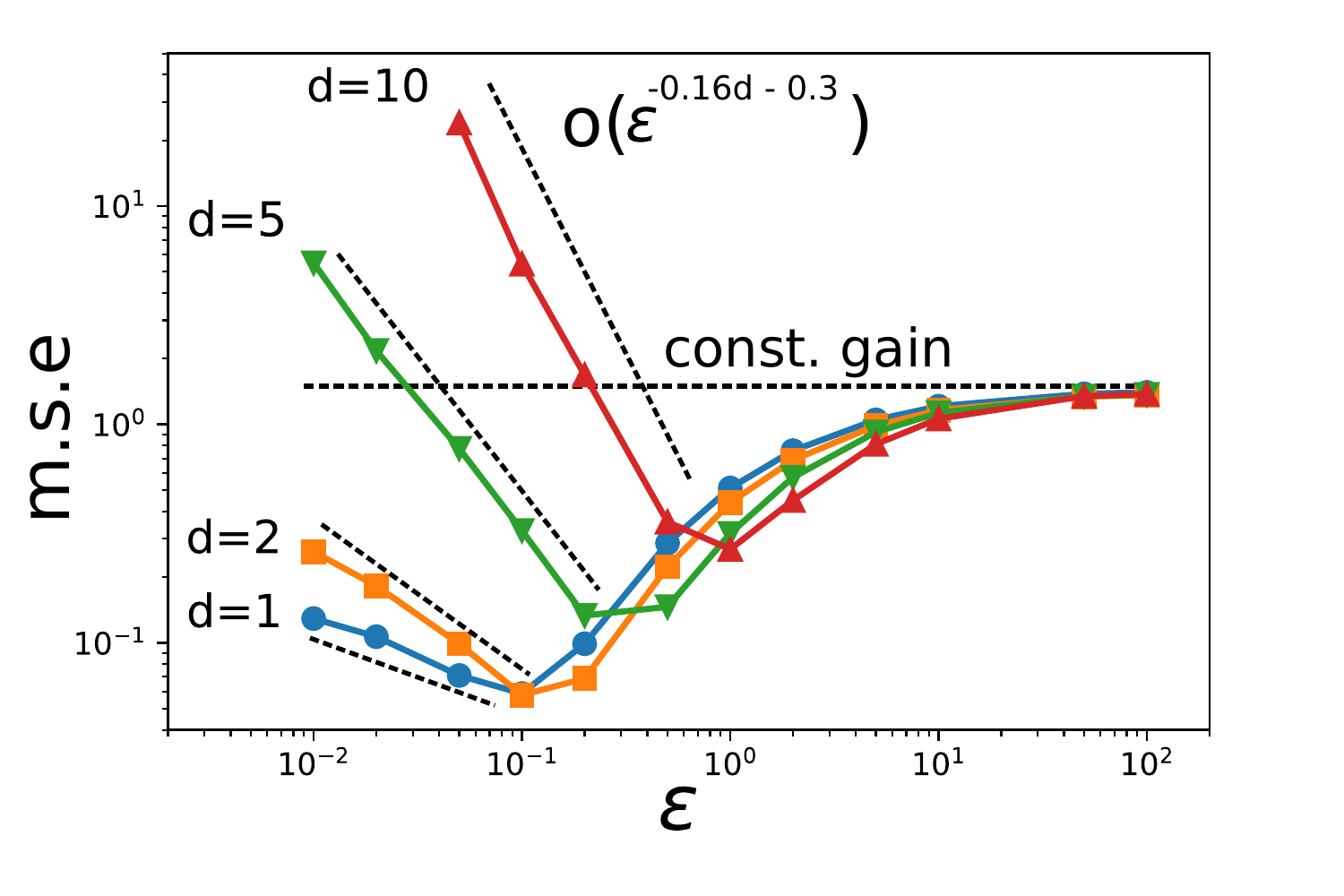}
		}&
		\subfloat[]{
			\includegraphics[width=0.45\columnwidth]{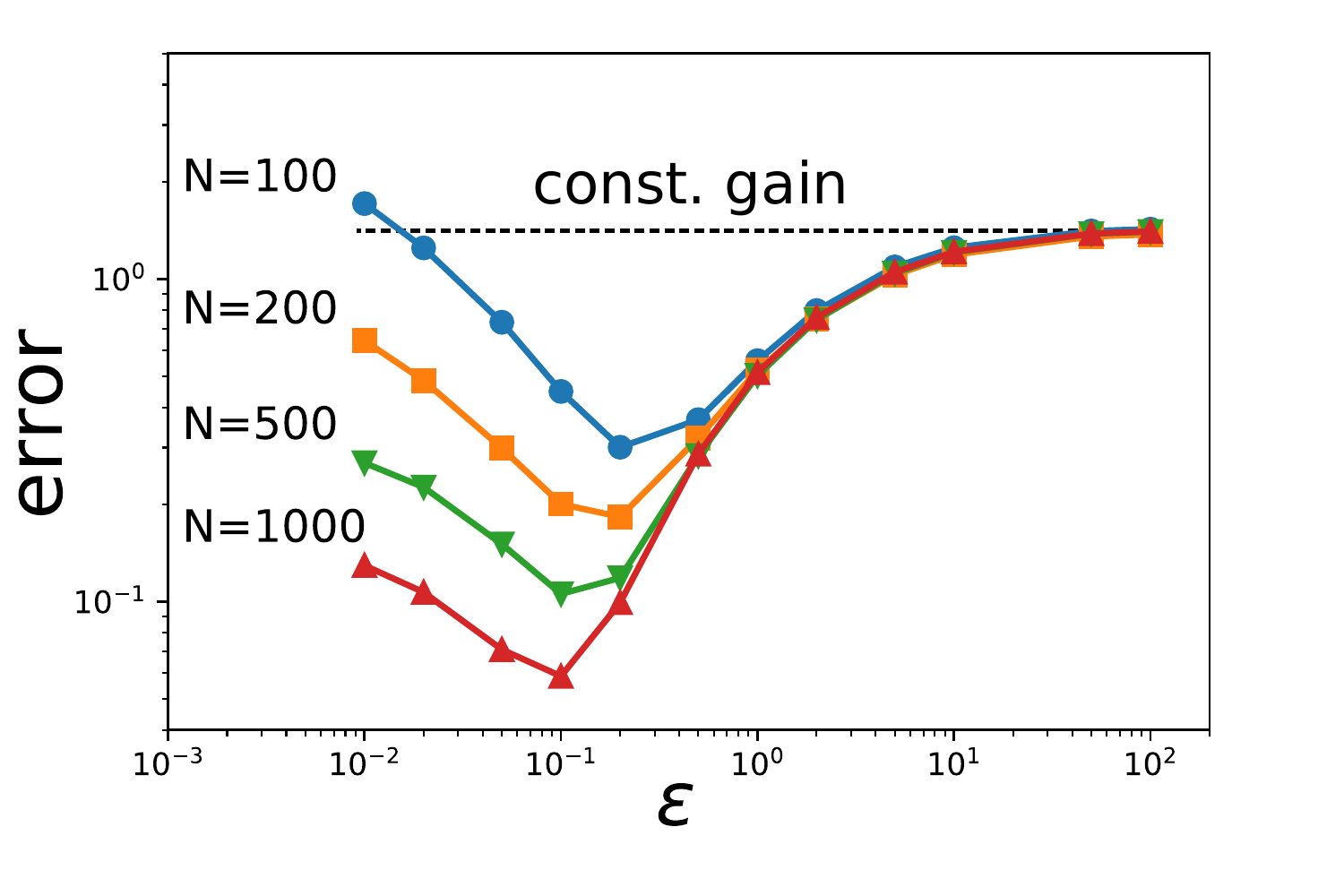}
		}\\
		\subfloat[]{
			\includegraphics[width=0.45\columnwidth]{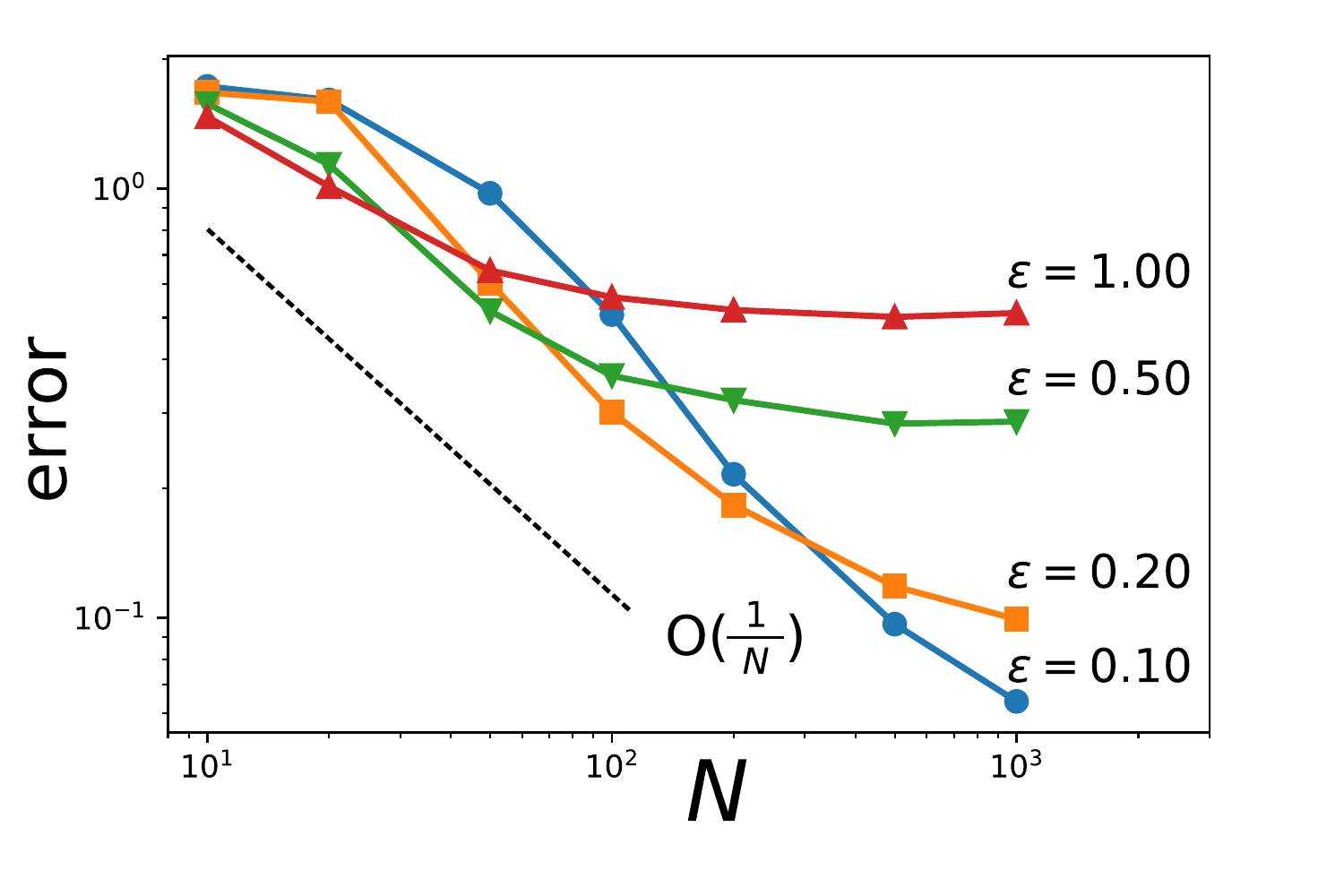}
		}&
		\subfloat[]{
			\includegraphics[width=0.45\columnwidth]{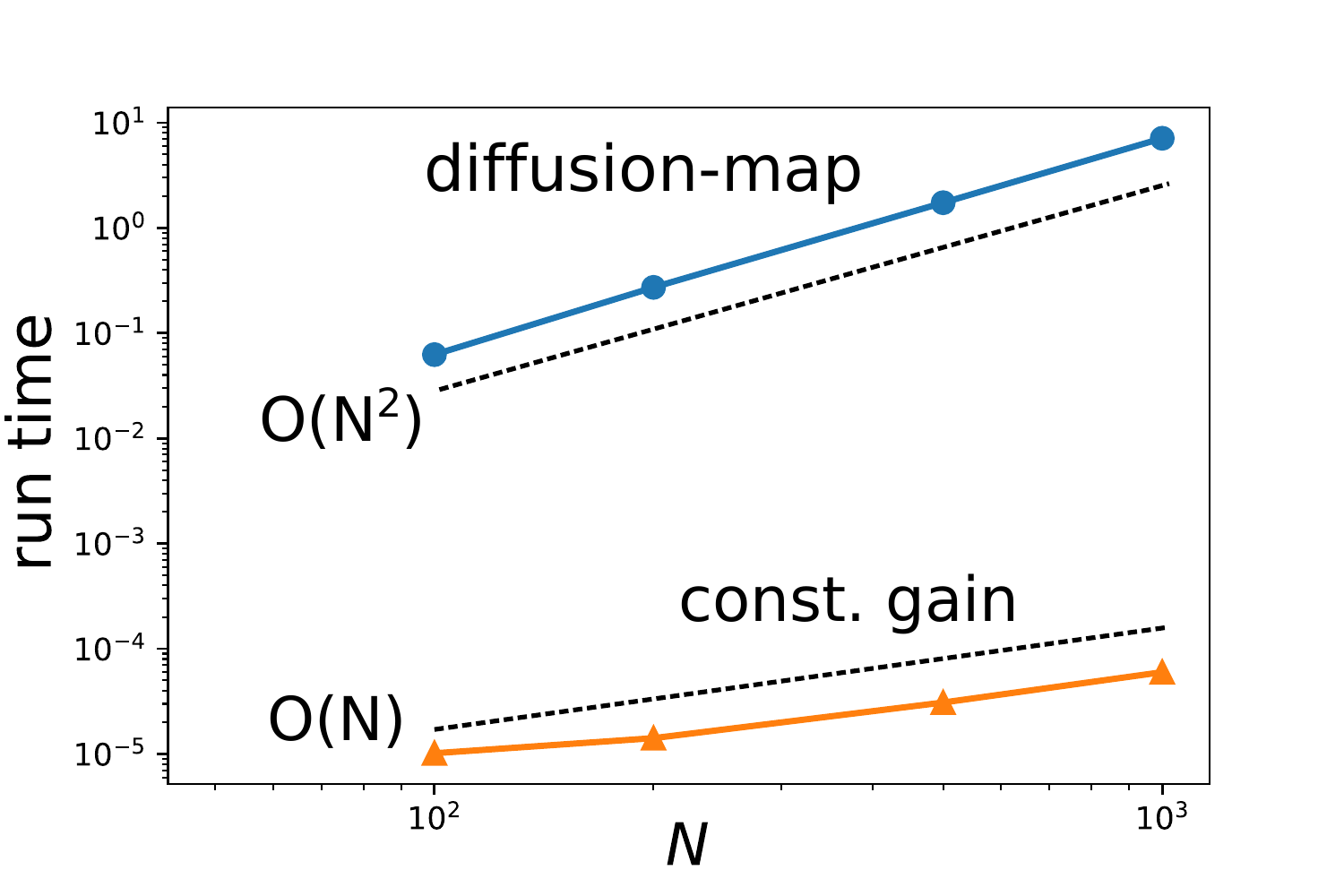}
		}		
	\end{tabular}
	\caption{Simulation results for the diffusion-map algorithm
          for the vector bimodal example: (a) Gain function
          approximation error as a function of
          $\epsilon$ for $d\in\{1,2,5,10\}$. (b) Error as a function of $\epsilon$ for  $N\in\{100,200,500,1000\}$. (c) Error as a function of $N$ for $\epsilon \in \{0.1,0.2,0.5,1.0\}$; (d) Comparison of the run-time  
	}
	\label{fig:numerics-all}
\end{figure} 

\todo{
\begin{remark}[Selection of $\epsilon$] The numerical results in
  \cref{fig:numerics-all} suggest that there is an optimal value
  of $\epsilon$ such that the error is smallest. Given the fact that
  the constant gain approximation results in the limit as
  $\epsilon\to\infty$, an optimal choice of $\epsilon$ may be possible
  more generally.  At the optimal value, one optimally trades-off
  the errors due to variance and bias.  The difficulty, of course, is that
  the formula for this optimal choice is not known and may not even be
  possible in general settings.  Instead, in the literature involving kernel methods, a popular heuristic is to set 
	$\epsilon = \frac{4\text{(med)}^2}{\log(N)}$ where
        $\text{(med)}$ is the median value of all pairwise distances
        $\{|X^i-X^j|\}_{i\neq j}$~\cite{chaudhuri2017mean}. The justification is that, with such a choice, 
	the matrix
        $[\geps(X^i,X^j)]_{i,j=1}^N$ is not close to the identity
        matrix (which represents the degenerate case).  

	\label{rem:epsilon}
\end{remark}
}
\todo{
\begin{remark}It is worthwhile to also examine the limit as $\epsilon
  \to 0$ while $N$ is fixed at a finite value. In this limit, the
  Markov matrix $\Ten$ converges to the identity matrix. As a result,
  the solution $\phivec$ to the fixed-point
  problem~\eqref{eq:finite-N-fixed-pt} is unbounded. However, in
  practice, value of $\phivec$ is large but finite, because the
  equation~\eqref{eq:finite-N-fixed-pt} is solved in an iterative
  manner with finite number of iterations. With a finite value of
  $\phivec$ and $\Ten$ equal to identity, the gain function given by
  the formula~\eqref{eq:gain-linear-form} is zero. Consequently, the
  feedback correction for each particle is zero. 
\end{remark}
}
\subsection{Filtering example}\label{sec:filtering-example}
Consider the following filtering problem:
\begin{align*}
\ud X_t  &= 0,\quad X_0\sim p_0,\\
\ud Z_t &= h(X_t)\ud t + \sigma_w \ud W_t,
\end{align*}
where $X_t\in \Re$, $Z_t \in \Re$, $\sigma_W>0$, and $\{W_t\}$ is standard Brownian motion, independent of $X_t$. The prior distribution $p_0$ is Gaussian $\mathcal{N}(0,1)$ and the observation function $h(x)=|x|$. For the static filtering problem, the posterior distribution is explicitly given by:
\begin{equation*}
p^\star_t(x) = \text{(const.)}p_0(x)e^{\frac{1}{\sigma_w^2}(h(x)Z_t - \frac{t}{2}h^2(x))}.
\end{equation*} 

Three filtering algorithms are implemented for this problem: (i) the FPF algorithm with the diffusion-map
gain approximation; (ii)  the FPF algorithm with the the constant gain
approximation (similar to EnKF); (iii) a sequential importance
resampling (SIR) particle filter~\cite{doucet09}. The simulation parameters are as follows: The measurement
noise $\sigma_w= 0.1$. The simulation is carried out for $T=500$
time-steps with step-size $\Delta t = 0.001$. Both the algorithms use
$N=200$ particles with identical initialization. For the diffusion-map
approximation, the kernel bandwidth was set to $\epsilon = 0.1$, and number of iterations in~\ref{alg:kernel} is set to $L = 100$.

The numerical results are depicted in
Figure~\ref{fig:filtering-example-1}. The distribution of the particles along with the exact posterior distribution
are depicted in Figure~\ref{fig:filtering-example-1}-(a). It is
observed that the FPF algorithm with the diffusion map approximation
provides a more accurate approximation of the posterior distribution.
In contrast, the constant-gain approximation fails to reproduce the
bimodal nature of the posterior distribution.

A quantitative estimate of the performance is provided in terms of a
mean squared error (m.s.e.). in estimating the conditional expectation
of the function $\psi(x)= x\mathds{1}_{x\leq 0}$.  A Monte Carlo
estimate of the m.s.e. is depicted in
Figure~\ref{fig:filtering-example-1}-(b) with $M=100$ runs.  At time $t$, it is
calculated according to
\begin{equation*}
\text{m.s.e.}_t=\frac{1}{M}\sum_{m=1}^M \left(\frac{1}{N}\sum_{i=1}^N \psi(X^{m,i}_t) - \int \psi(x)p^\star_t(x)\ud x\right)^2.
\end{equation*}

At time $t=0$, the empirical distribution of the particles is an
accurate approximation of the prior distribution, because the
particles are sampled i.i.d. from the prior distribution. 
Therefore, the m.s.e at $t=0$ is small.  As time progress, the
difference between the empirical distribution and the exact posterior
becomes larger because the filter update is not exact.  For FPF, as the
time-step $\Delta
t$ is small, the main
source of the m.s.e. error is due to the error in the gain function
approximation.  Therefore, the diffusion map FPF with its more
accurate approximation of the gain yields better m.s.e., compared to
the EnKF using the constant gain approximation.  The particle filter, like FPF with diffusion map approximation, is able to capture the bi-modal distribution. However, due to  the stochastic noise, introduced from the resampling step, it  admits larger error.

\begin{figure}[t]
\centering
\begin{tabular}{cl}
\subfloat[]{\includegraphics[width = 0.6\columnwidth]{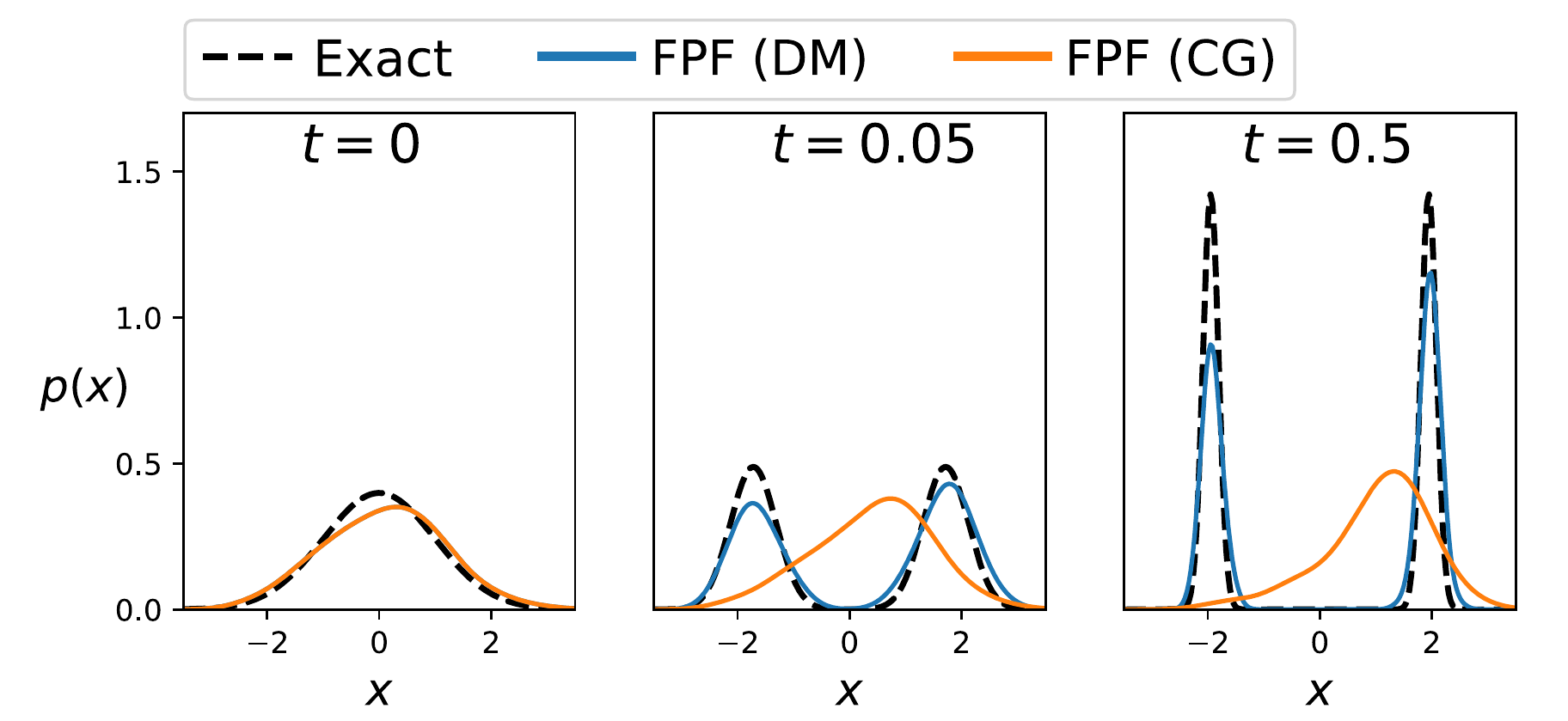}}
&
\subfloat[]{\includegraphics[width = 0.33\columnwidth]{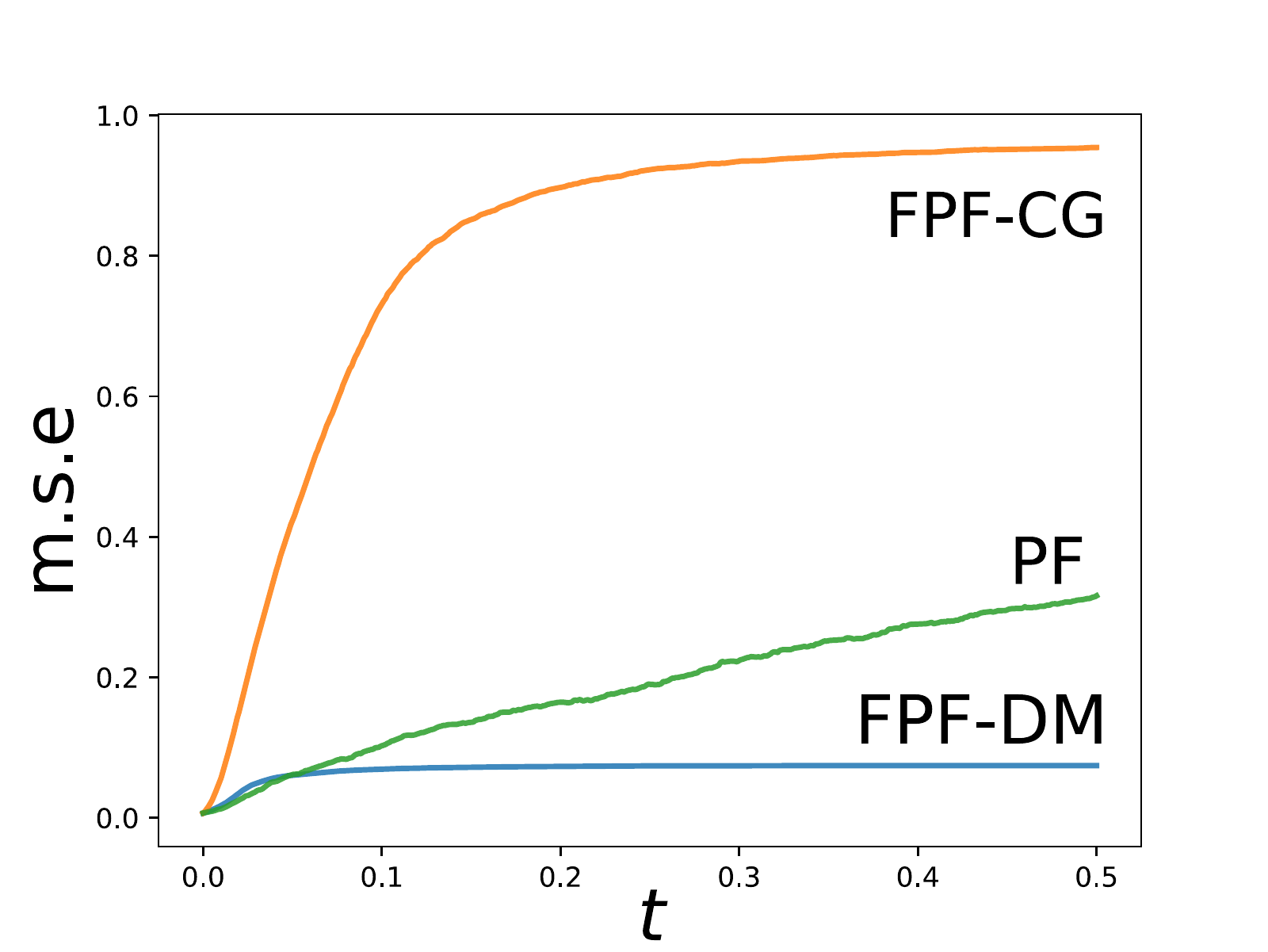}}
\end{tabular}
\caption{Simulation results for the FPF algorithm for the filtering example: (a) The distribution of the
  particles obtained using the diffusion-map approximation and the
  constant gain approximation as compared to the exact distribution (dashed line); (b) Plot
  of the mean squared error 
  in estimating the conditional expectation of the function
  $\psi(x)=x\mathds{1}_{x<0}$.}
\label{fig:filtering-example-1}
\end{figure} 

\todo{
\subsection{Benes filter}
Consider the following filtering problem:
\begin{align*}
\ud X_t &= \mu \sigma_B \tanh(\frac{\mu}{\sigma_B}X_t)\ud t + \sigma_B \ud B_t,\quad X_0=x_0,\\
\ud Z_t &= (h_1 X_t + h_1h_2)\ud t + \ud W_t,
\end{align*}
where $\{X_t\},\{Z_t\} \in \Re$ are one-dimensional stochastic
processes, $\{B_t\}$ and $\{W_t\}$ are one-dimensional, independent,
Brownian motions, $x_0$ is a known initial condition, and the
constants $\mu,\sigma_B,h_1,h_2\in \Re$.  This filtering problem has a
finite-dimensional analytical solution given by a mixture of two
Gaussians~\cite{bain2009}:
 \[
w_t \mathcal{N}(a_t-b_t,\sigma_t^2) + (1-w_t)
\mathcal{N}(a_t+b_t,\sigma_t^2),
\] 
where
\begin{align*}
a_t&= \sigma_B\Psi_t\tanh(h_1\sigma_Bt) + \frac{h_2 + x_0}{\cosh(h_1\sigma_Bt)} -h_2,\quad b_t = \frac{\mu}{h_1}\tanh(h_1\sigma_Bt),\\
\sigma_t^2 &= \frac{\sigma_B}{h_1}\tanh(h_1\sigma_Bt),\quad \Psi_t = \int_0^t \frac{\sinh(h_1\sigma_B s)}{\sinh(h_1\sigma_B t)}\ud Z_s,\quad w_t=\frac{1}{1+e^{\frac{2a_tb_t}{\sigma_B}\coth(h_1\sigma_B t)}}.
\end{align*}

The three filtering algorithms, as in the previous example, are
also implemented and evaluated for this problem.  The simulation
parameters are chosen according to the values used in~\cite{crisan2013kusuoka}:
$\mu=0.5$, $h_1= 0.4$, $h_2=0$, $\sigma_B = 0.8$, $x_0=1.0$.  The
simulations are carried out over the time
horizon $T=10$.  The stochastic integrals are approximated with a
first-order Euler scheme using the discretization step-size $\Delta t
= 0.01$.  For FPF with DM gain approximation, the kernel
bandwidth $\epsilon$ is selected according to the rule described in
\cref{rem:epsilon} and number of iterations in~\cref{alg:kernel} is $L=100$. 

The numerical results are depicted in \cref{fig:Benes-filter}. It is
observed that the FPF with DM and constant gain
approximations admit almost the same accuracy. The reason is that the
exact bimodal posterior distribution quickly converges to an almost
uni-modal distribution.  This is because the weight of one of the
mixture modes converges to zero. 
The accuracy of the SIR 
particle filter is poor because of the stochastic noise introduced
from  resampling step. 
}
\begin{figure}[t]
	\centering
	\begin{tabular}{cl}
		\subfloat[]{\includegraphics[width = 0.6\columnwidth]{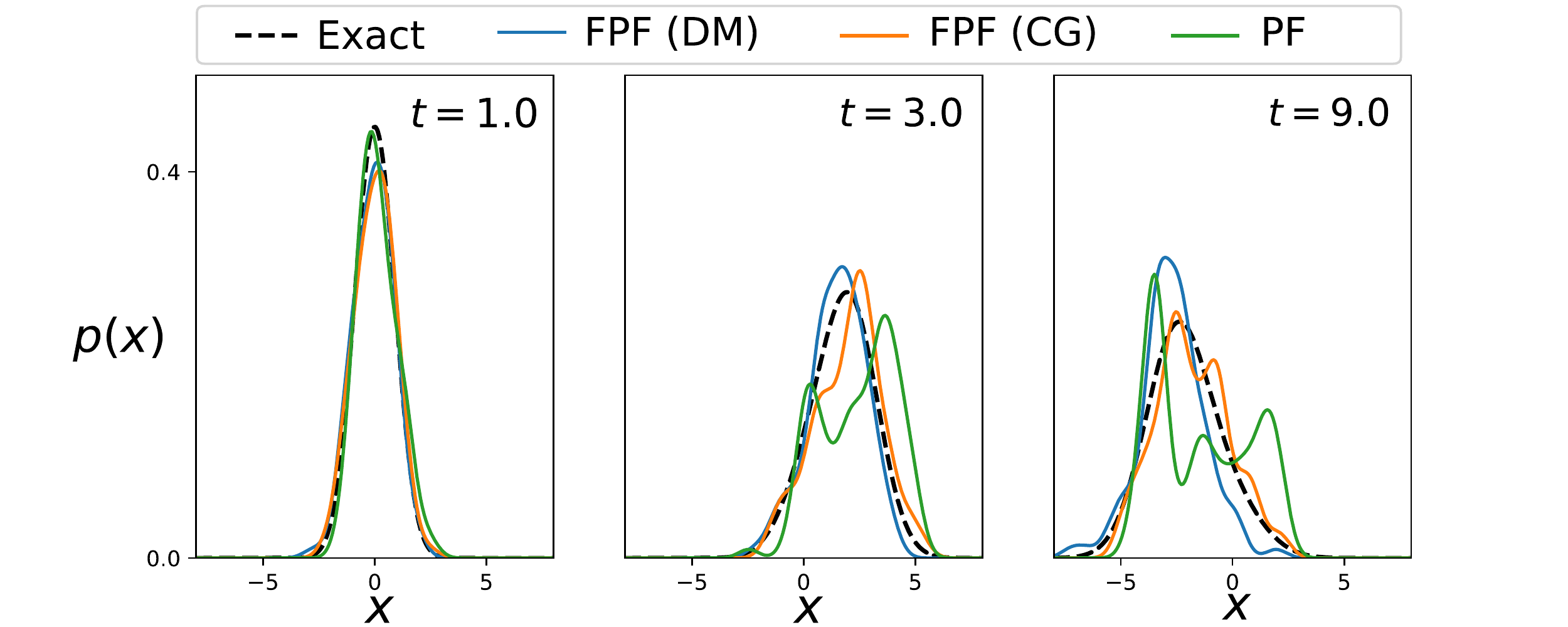}}
		&
		\subfloat[]{\includegraphics[width = 0.33\columnwidth]{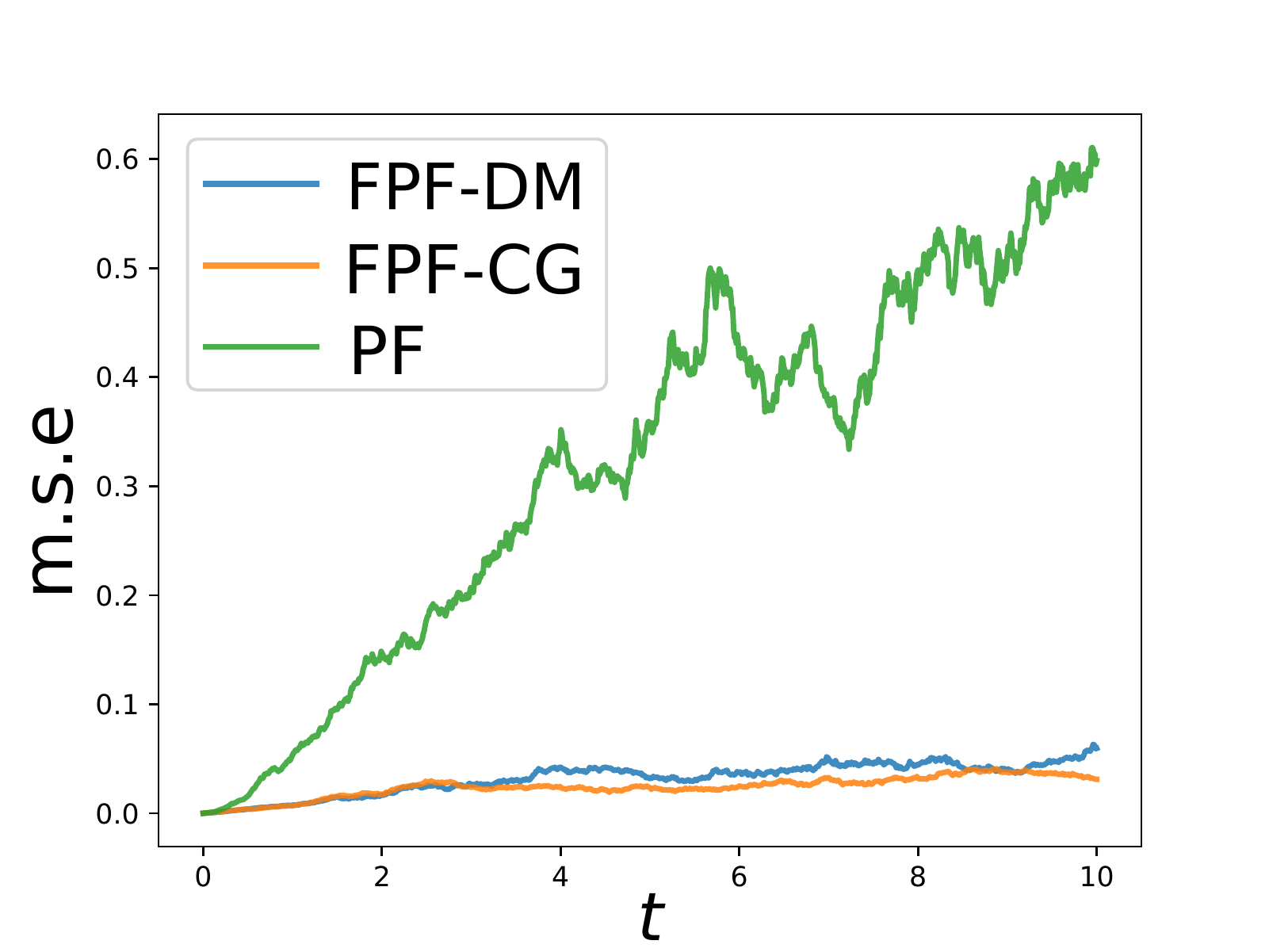}}
	\end{tabular}
	\caption{Simulation results for the FPF algorithm for the Benes filter example: (a) The distribution of the
		particles obtained using the diffusion-map approximation and the
		constant gain approximation as compared to the exact distribution (dashed line); (b) Plot
		of the mean squared error 
		in estimating the conditional expectation of the function
		$\psi(x)=x$.}
	\label{fig:Benes-filter}
\end{figure} 

\todo{
\section{Conclusions and Directions for Future Work}

In this paper, the diffusion map (DM) algorithm was presented for the
problem of gain function approximation in the FPF.  It was
shown that the approximation error converges to zero in the limit as
the number of particles $N \to \infty$ and the kernel bandwidth
parameter $\epsilon\to0$ (\cref{thm:bias,thm:variance}).  In the limit
as $\epsilon \to \infty$, the gain obtained using the DM algorithm was
shown to converge to the constant gain approximation
(\cref{prop:convergence_to_constant_gain}). Consequently, in this limit, the FPF using the DM algorithm reduces to
an EnKF.  This is an important property because it suggests a path to
improve the performance of an EnKF algorithm by choosing an appropriate (finite) value of the parameter
$\epsilon$.  The bounds, scalings and the numerical experiments
described in this paper provide guidance on how to choose the
parameter $\epsilon$ for large but finite $N$.       
Some directions for future work are as follows:
\begin{enumerate}
	\item Relaxing the assumptions: The analysis is based on
          Assumption A1 which is restrictive because it does not
          include the mixture of Gaussians. Relaxing this assumption, 
          possibly as suggested in \cref{rem:assumption}, is one possible avenue
          of future work. 
	\item Error analysis for the FPF: The error analysis in this
          paper concerns primarily the convergence of function $\phiepsN$
          to the exact solution $\phi$.  Extending these results to include 
          the convergence analysis of the gain $\KepsN= \nabla
          \phiepsN$ to the exact gain $\K = \nabla \phi$ is important
          for the complete error analysis of the FPF with finitely
          many particles. 
	\end{enumerate}

}
\bibliographystyle{siamplain}
\bibliography{../bibfiles/SIAM-Gain}

\newpage
\appendix

\section{Exact semigroup and and its diffusion map approximation for the Gaussian case}
\label{apdx:Laplacian-Gaussian}
In this section, we provide explicit formulae for the exact semigroup $P_t$ and its
diffusion map approximation $T_\epsilon$, for the special case when the
density $\rho$ is a Gaussian $\mathcal{N}(m,\Sigma)$. 
For the Gaussian case, the semigroup is the
Ornstein-Uhlenbeck semigroup~\cite[Sec. 2.7.1]{bakry2013} an its
spectral representation is obtained in terms of the Hermite polynomials.  For
notational ease, after an appropriate change of coordinates, we
assume $m=0$ and $\Sigma =\text{diag}(\sigma_1^2,\ldots,\sigma_d^2)$ where
$\sigma_1^2\geq\sigma_2^2\geq \ldots \geq \sigma_d^2>0$ are
ordered eigenvalues of $\Sigma$.

%


\begin{definition}
The {\em Hermite polynomials} are recursively defined as
\begin{equation*}
\hslash_{n+1}(x) = x \, \hslash_{n}(x) - \hslash'_n(x),\quad \hslash_0(x) = 1,
\end{equation*} 
where the prime $'$ denotes the derivative.
\end{definition} 



\begin{proposition}\label{prop:Teps-Gaussian}
Suppose the density $\rho$ is
  Gaussian ${\cal N}(0,\Sigma)$ with the variance $\Sigma =
  \text{diag}(\sigma_1^2,\ldots,\sigma_d^2)$ and 
$\sigma_1^2\geq\sigma_2^2\geq \ldots \geq \sigma_d^2>0$.  Then 
\begin{romannum}
\item The exact semigroup $P_t$ and the diffusion map $T_\epsilon$ admit the following
  integral representations:
\begin{align}
P_t f(x) &= \int_{\Re^d} \prod_{j=1}^d \frac{1}{(2\pi \sigma_j^2(1-e^{-2t\sigma_j^{-2}}))^{1/2}}
e^{-\frac{|y_j-e^{-t\sigma_j^{-2}}x_j|^2}{2\sigma_j^2(1-e^{-2t\sigma_j^{-2}})}}f(y)\ud y, 
\label{eq:exact-semigroup-Gaussian}\\
\Teps f(x) &= \int_{\Re^d}\prod_{j=1}^d\frac{1}{(4\pi\epsilon(1-\delta_j))^{1/2}}
e^{-\frac{|y_j-(1-\delta_j)x_j|^2}{4\epsilon(1-\delta_j)}} f(y)\ud y,
\label{eq:approximate-semigroup-Gaussian}
\end{align} 
where $\delta_j:=\epsilon\frac{\sigma_j^2+4\epsilon}{\sigma_j^4+3\sigma_j^2\epsilon + 4\epsilon^2}$ for $j=1,\ldots,d$. 
\item The operators $P_t$ and $T_\epsilon$ each have a unique invariant
  Gaussian density given by $\mathcal{N}(0,\Sigma)$ and 
  $\mathcal{N}(0,\Sigma_\epsilon)$, respectively, where $\Sigma_\epsilon =
  \text{diag}(\sigma_{\epsilon,1}^2,\ldots,\sigma_{\epsilon,d}^2)$
  with $\sigma_{\epsilon,j}^2=\frac{2\epsilon(1-\delta_j)}{\delta_j(2-\delta_j)}$
  for $j=1,\ldots,d$. 
\item The eigenvalues and the associated eigenfunctions are as follows:
\begin{align*}
\text{Spectrum of the semigroup}\;\;P_t:&\quad \lambda_n = \prod_{j=1}^de^{-t\frac{n_j}{\sigma_j^2}}, \quad e_n (x) = \prod_{j=1}^d \hslash_{n_j}(\frac{x_j}{\sigma_j}), \\
 \text{Spectrum of the diffusion map}\;\;T_\epsilon:&\quad \lambda_n = \prod_{j=1}^d(1-\delta_j)^{n_j}, \quad e_n (x) =
 \prod_{j=1}^d \hslash_{n_j}(\frac{x_j}{\sigma_{\epsilon,j}}),
\end{align*} 
for $n=(n_1,\ldots,n_d) \in \mathbb{Z}_+^d$. 
\item The operator norm $\|P_t\|_{L^2(\pr)}= e^{-\frac{t}{\sigma_1^2}}$ and $\|\Teps\|_{L^2(\preps)}= 1-\delta_1$.
\end{romannum}
\end{proposition}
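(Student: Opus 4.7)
The plan is to reduce everything to one-dimensional Gaussian computations, exploiting the fact that the diagonal covariance $\Sigma$ makes all the operators product-structured across coordinates. First I would handle part (i). For the exact semigroup $P_t$, I recall from \cref{sec:semigroup} that $P_tf(x)=\E[f(S_t)\mid S_0=x]$ where $S_t$ solves $\ud S_t=-\nabla V(S_t)\ud t+\sqrt{2}\ud B_t$. With $V(x)=\tfrac{1}{2}x^\top\Sigma^{-1}x$, this is an Ornstein--Uhlenbeck SDE which decouples componentwise as $\ud S_t^j=-\sigma_j^{-2}S_t^j\ud t+\sqrt{2}\ud B_t^j$, whose explicit Gaussian transition density yields \cref{eq:exact-semigroup-Gaussian}. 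For $T_\epsilon$, I would compute $n_\epsilon\ast\rho$ explicitly by performing the separable Gaussian integral: completing the square gives $(g_\epsilon\ast\rho)(y)\propto\prod_j\exp\!\bigl(-\tfrac{y_j^2}{2(\sigma_j^2+2\epsilon)}\bigr)$. Observing that constant prefactors cancel in the ratio defining $T_\epsilon$, the operator reduces to $T_\epsilon f(x)=\int g_\epsilon(x,y)f(y)\tilde\rho(y)\ud y/\int g_\epsilon(x,y)\tilde\rho(y)\ud y$ where $\tilde\rho\propto\rho/\sqrt{g_\epsilon\ast\rho}$ is Gaussian with variance $\tilde\sigma_j^2=\frac{2\sigma_j^2(\sigma_j^2+2\epsilon)}{\sigma_j^2+4\epsilon}$. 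A second Gaussian integration and algebraic simplification should yield the mean $(1-\delta_j)x_j$ and variance $2\epsilon(1-\delta_j)$ claimed in \cref{eq:approximate-semigroup-Gaussian}, with $\delta_j$ as stated.

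For part (ii), because both transition densities are Gaussian and centered-linear in $x$, the invariant measure must be Gaussian with zero mean. Its variance is determined by self-consistency: for $P_t$ the fixed point is $\Sigma$ by construction (the weighted Laplacian is generated from $\rho$); for $T_\epsilon$ one solves $\sigma_{\epsilon,j}^2=(1-\delta_j)^2\sigma_{\epsilon,j}^2+2\epsilon(1-\delta_j)$, which gives $\sigma_{\epsilon,j}^2=2\epsilon(1-\delta_j)/[\delta_j(2-\delta_j)]$ as asserted.

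For part (iii), I would argue that both $P_t$ and $T_\epsilon$ factor coordinatewise as tensor products of one-dimensional Gaussian kernel operators. For any one-dimensional Gaussian kernel operator with mean $\alpha x$ and variance $v^2$ (with $|\alpha|<1$), I can reparametrize it as an OU semigroup $e^{\tau\Delta_{\rho_s}}$ by setting $s^2=v^2/(1-\alpha^2)$ and $\tau=-s^2\log\alpha$, so that $\alpha=e^{-\tau/s^2}$ and $v^2=s^2(1-e^{-2\tau/s^2})$. The classical OU spectral identity $\Delta_{\rho_s}\hslash_n(\cdot/s)=-(n/s^2)\hslash_n(\cdot/s)$, which follows from the Hermite recursion and can be verified directly, then gives eigenvalue $\alpha^n$ with eigenfunction $\hslash_n(\cdot/s)$. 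Applying this to the $j$-th factor of $P_t$ gives $\alpha_j=e^{-t/\sigma_j^2}$ and $s_j^2=\sigma_j^2$; for $T_\epsilon$ it gives $\alpha_j=1-\delta_j$ and $s_j^2=\sigma_{\epsilon,j}^2$, with eigenvalues and eigenfunctions multiplying across the product as claimed.

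For part (iv), since the spectra are explicit and both operators are self-adjoint on their respective $L^2$ spaces, the norm on the mean-zero subspace equals the largest non-trivial eigenvalue. For $P_t$ this is $\max_j e^{-t/\sigma_j^2}=e^{-t/\sigma_1^2}$, using $\sigma_1^2=\max_j\sigma_j^2$. For $T_\epsilon$ the claim reduces to showing that $\delta_j$ is monotone decreasing in $\sigma_j^2$, so that $\max_j(1-\delta_j)=1-\delta_1$; differentiating $\delta(s)=\epsilon(s+4\epsilon)/(s^2+3s\epsilon+4\epsilon^2)$ with respect to $s=\sigma_j^2$ gives a numerator $-(s^2+8s\epsilon+8\epsilon^2)<0$, confirming monotonicity. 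The main obstacle I anticipate is bookkeeping the algebra that turns $\tilde\sigma_j^2/(\tilde\sigma_j^2+2\epsilon)$ and the associated variance into the neat form involving $\delta_j$; this is routine but error-prone and is where I expect to spend the most effort.
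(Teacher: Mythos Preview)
Your proposal is correct. The paper itself omits the proof entirely, citing a prior conference paper, so there is nothing to compare against; your direct Gaussian computation (separable OU transition for $P_t$, two rounds of completing the square for $T_\epsilon$, then the standard Hermite spectral decomposition and a monotonicity check on $\delta_j$) is a complete and self-contained argument for all four parts.
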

\begin{proof}Omitted. See~\cite[Prop. 1]{Amir_ACC17}.
\end{proof}

\section{Proof of~\cref{prop:semigroup-equivalent}}
\label{apdx:semigoup-equivalent}

Based on the use of the spectral
representation~\cref{eq:spectral_rep}, the weak solution of the
Poisson equation is readily seen to be
\begin{equation}\label{eq:spectral_sol}
\phi = \sum_{m=1}^\infty \frac{1}{\lambda_m}\lr{e_m}{h}e_m.
\end{equation}
This solution~\cref{eq:spectral_sol} also satisfies the fixed-point
equation~\cref{eq:Poisson-semigroup} because 
\begin{align*}
P_t \phi + \int_0^t P_s (h-\hat{h}_\pr) \ud s&=\sum_{m=1}^\infty e^{-t\lambda_m}\lr{e_m}{\phi}e_m + \int_0^t \sum_{m=1}^\infty e^{-s\lambda_m}\lr{e_m}{h}e_m \ud s\\
&= \sum_{m=1}^\infty \frac{e^{-t\lambda_m}}{\lambda_m}\lr{e_m}{h}e_m +\sum_{m=1}^\infty 
\frac{1-e^{-t\lambda_m}}{\lambda_m}\lr{e_m}{h}e_m = \phi.
\end{align*}
The uniqueness of the solution to the fixed-point
equation~\cref{eq:Poisson-semigroup} follows from the contraction
mapping principle because $\|P_t\|_{L^2_0(\pr)}= e^{-t\lambda_1}<1$.

\section{Derivation of the linear form of the gain~\cref{eq:gain-linear-form}}\label{apdx:gain-linear-form}
By a direct calculation,
\begin{align*}
\nabla_x \frac{\kepsN(x,X^j)}{\sum_{l=1}^N \kepsN(x,X^l)} &= \frac{\frac{X^j-x}{2\epsilon} \kepsN(x,X^j)}{\sum_{l}\kepsN(x,X^l)} - \frac{\sum_{l=1}^N \frac{X^l-x}{2\epsilon}\kepsN(x,X^l)}{\sum_{l} \kepsN(x,X^l)} \frac{\kepsN(x,X^j)}{\sum_{l} \kepsN(x,X^l)},
\end{align*}
which evaluated at $x=X^i$ yields
\begin{align*}
\nabla_x\left. \left(\frac{\kepsN(x,X^i)}{\sum_{j=1}^N \keps(x,X^j)}\right)\right|_{x=X^i} 
=&\frac{1}{2\epsilon}\left( X^j{\sf T}_{ij}- \sum_{l=1}^N X^l{\sf
   T}_{il}{\sf T}_{ij} \right).
\end{align*}
Using the definitions \cref{eq:empirical_formula_for_gain} for $\K_\epsilon^{(N)}$, and
\cref{eq:s-r-definition} for $r$ and $s$,
\begin{align*}
\K_\epsilon^{(N)} (X^i) &= \nabla_x\left. \left( \frac{1}{n_\epsilon^{(N)}(x)}\sum_{j=1}^N \kepsN(x,X^j)
({\sf \Phi}_j + \epsilon   \hvec_j)\right) \right|_{x=X^i}\\
&= \nabla_x\left. \left(\frac{\sum_{j=1}^N \kepsN(x,X^j)
	r_j}{\sum_{j=1}^N \kepsN(x,X^j)
	}\right) \right|_{x=X^i}\\
&=\frac{1}{2\epsilon}\left( \sum_{j=1}^N X^j{\sf T}_{ij}(r_j- \sum_{l=1}^N {\sf T}_{il}r_l) \right)=\sum_{j=1}^N s_{ij}X^j.
\end{align*}


\def\rd#1{{\color{red}#1}}

\section{Proof of~\cref{prop:N}}
\label{proof:prop:N}

\begin{romannum}
	\item $\Ten$ is a Markov matrix because
          $\Ten_{ij}=\frac{1}{\nepsN(X^i)}\kepsN(X^i,X^j)>0$ a.s. and \[\sum_{j=1}^N \Ten_{ij} = \frac{1}{\nepsN(X^i)}\sum_{i=1}^N \kepsN(X^i,X^j) = \frac{\nepsN{(X^i)}}{\nepsN(X^i)}=1\]
	 The stationary distribution is $\pi$ because
	\begin{align*}
	\sum_{i=1}^N \pi_i \Ten_{ij} &= \sum_{i=1}^N \frac{\nepsN(X^i)}{\sum_{k=1}^N \nepsN(X^k)} \frac{ \kepsN(X^i,X^j) }{\nepsN(X^i)}\\&=  \frac{\sum_{i=1}^N \kepsN(X^i,X^j)}{\sum_{k=1}^N \nepsN(X^k)} = \frac{\nepsN(X^j)}{\sum_{k=1}^N \nepsN(X^k)} = \pi_j.
	\end{align*}
	All entries of the Markov matrix are positive. Hence the Markov chain is irreducible and aperiodic. Therefore, the stationary distribution is unique.
	It is reversible because 
	\begin{align*}
	\pi_i \Ten_{ij} &= \frac{\nepsN(X^i)}{\sum_{k=1}^N \nepsN(X^k)} \frac{ \kepsN(X^i,X^j) }{\nepsN(X^i)}=\frac{\kepsN(X^j,X^i)}{\sum_{k=1}^N \nepsN(X^k)}\\&=\frac{\nepsN(X^j)}{\sum_{k=1}^N \nepsN(X^k)} \frac{ \kepsN(X^j,X^i) }{\nepsN(X^j)} =\pi_j \Ten_{ji}.
	\end{align*}
	\item Denote $\delta:=\min_{ij} \Ten_{ij}$.  Then $\delta>0$
          a.s. Therefore, $\|\Ten\|_{L^2_0(\pi)}\leq
          1-\frac{N\delta}{2}<1$, and is thus a contraction on
          $L^2_0(\pi)$~\cite[Ch. 5]{stroock2013introduction}). It
          follows, from the contraction mapping principle, that the
          fixed point equation~\cref{eq:fixed-pt-finite-N} has a
          unique solution.
	\item Evaluating the definition \cref{eq:phiepsN} at $x=X^i$ concludes $\phiepsN(X^i)= \phivec_i$ because,
	\begin{equation*}
	\begin{aligned}
	\phiepsN(X^i) &= \frac{1}{\nepsN(X^i)}\sum_{j=1}^n
        \kepsN(X^i,X^j) \phivec_j + \epsilon(h(X^i) - \pi(h)) \\&= \sum_{j=1}^N \Ten_{ij}\phivec_j +  \epsilon(\hvec_i - \pi(h))  = \phivec_i.
	\end{aligned}
	\end{equation*}
	Therefore $\phiepsN$ solves the fixed-point equation~\cref{eq:empirical-fixed-pt}, because
	\begin{align*}
	\TepsN \phiepsN(x)  &= \frac{1}{\nepsN(x)}\sum_{j=1}^n \kepsN(x,X^j) \phiepsN(X^j)\\
	&=\frac{1}{\nepsN(x)}\sum_{j=1}^n \kepsN(x,X^j) \phivec_j\\
	&\overset{\cref{eq:phiepsN}}{=} \phiepsN(x) - \epsilon(h(x)-\pi(h)).
	\end{align*}
\end{romannum}

\section{Proof of the~\cref{prop:Tepsn-convergence}}\label{apdx:Tepsn-convergence}
\begin{proof}
	\begin{romannum}
		\item Let $U =- \frac{1}{2}\log (\pr)$ and
                  $W=|\nabla U|^2-\Delta U$ as defined in
                  ~\cref{eq:Ueps-definition,eq:Weps-definition}.  To obtain the representation~\cref{eq:Pt-Feynman-Kac} for the semigroup $P_t$, consider the unitary transformation~\cite[Sec. 1.15.7]{bakry2013}:
		\begin{equation}
		e^{-U} \Delta_\pr \: e^{U} = \Delta  - W.
		\end{equation}  
		Therefore, for any function $f \in C_b(\Re^d)$,
		\begin{align*}
		e^{-U}~P_t \: e^{U} (f)  &= e^{t(\Delta-W)} (f) = \Expect[e^{-\int_0^t W(B^x_{2s})\ud s}f(B^x_{2t})],
		\end{align*}
where the stochastic representation (second equality) follows from the
Feynman-Kac formula; $B_t^x$ is a Brownian motion initialized at $x$.  Setting $f(x)=e^{-U(x)}g(x)$, 
		\begin{align*}
		P_t g(x)&=e^{U(x)}e^{t(\Delta-W)}(e^{-U}g)(x)=e^{U(x)}\Expect[e^{-\int_0^t W(B^x_{2s})\ud s}e^{-U(B^x_{2t})}g(B^x_{2t})],
		\end{align*}
		which is the representation~\cref{eq:Pt-Feynman-Kac}.
		
\medskip

		Next, the representation~\cref{eq:Teps-Feynman-Kac} is
                obtained.  Using the
                definitions, \eqref{eq:Teps-definition} of $\Teps$
                and \cref{eq:Ueps-definition,eq:Weps-definition} of
                $U_\epsilon$ and $W_\epsilon$,
		\begin{align*}
		\nTeps f (x) = \frac{G_\epsilon
                  (fe^{-U_\epsilon})(x)}{G_\epsilon
                  (e^{-U_\epsilon})(x)}&=e^{U_\epsilon(x)-\epsilon
                  W_\epsilon(x)}G_\epsilon (e^{-U_\epsilon}f)(x) \\
                &= e^{U_\epsilon(x)-\epsilon W_\epsilon(x)}\Expect[e^{-U_\epsilon(B^x_{2\epsilon})}f(B^x_{2\epsilon})],
		\end{align*}
                where the final equality follows 
		from using the stochastic representation of the heat semigroup $G_\epsilon$.
		The representation~\cref{eq:Teps-Feynman-Kac} is
                obtained by iterating this formula $n$ times. 
\medskip
		\item Without loss of generality, upon a change of
                  coordinates, assume $m=0$ and
                  $\Sigma=\text{diag}(\sigma_1^2,\ldots,\sigma_d^2)$
                  in Assumption A1.  Using the definitions	
		\begin{align}
		U_\epsilon(x) &= U(x) -\frac{1}{2}\log(\pr(x)) + \frac{1}{2}\log(\Geps (\pr) (x) ).\label{eq:ueps_u_log-Log}
		\end{align}

Now, $\log(\pr(x)) = \log (\pr_g(x;\Sigma)) + w(x)$.  So, the main
calculation is to approximate $\log(G_\epsilon(\pr))$. Using the definition
		\begin{align*}
		\Geps (\pr) (x) &=\int_{\Re^d} \geps(x,y)\pr_g(y;\Sigma)e^{-w(y)}\ud y \\
		&=
		\int_{\Re^d} \frac{e^{-\sum_{n=1}^d \frac{|x_n-y_n|^2}{4\epsilon}}}{(4\pi \epsilon)^{d/2}}\frac{e^{-\sum_{n=1}^d \frac{|y_n|^2}{2\sigma_n^2}-w(y)}}{\prod_{n=1}^d(2\pi \sigma_n^2)^{1/2}}\ud y\\
		&= \frac{^{-\frac{1}{2} \sum_{n=1}^d \frac{|x_n|^2}{2(\sigma_n^2+2\epsilon)}}}{\prod_{n=1}^d(2\pi (\sigma_n^2+2\epsilon))^{1/2}}  \int_{\Re^d} \frac{e^{-\sum_{n=1}^d \frac{|y_n-(1-\delta_n)x_n|^2}{4\epsilon(1-\delta_n)}}}{\prod_{n=1}^d(4\pi\epsilon(1-\delta_n))^{1/2}}e^{-w(y)}\ud y\\
		& = \pr_g(x;\Sigma + 2\epsilon I)  G^{(\delta)}_{\epsilon} (e^{-w})((I-\delta)x),
		\end{align*}
		where $\delta_n =
                \frac{2\epsilon}{\sigma_n^2+2\epsilon}$, $\delta =
                \text{diag}(\delta_1,\ldots,\delta_d)$ and
                $G^{(\delta)}_\epsilon$ is the semigroup
                associated with the PDE $\frac{\partial}{\partial
                  t}G^{(\delta)}_{t} f = G^{(\delta)}_t(
                \text{tr}((I-\delta)\nabla^2 f))$.

		The Taylor expansion of $\Gdelta_{\epsilon}(e^{-w})$,
                about $\epsilon=0$, is expressed as
		\begin{align*}
		\Gdelta_{\epsilon}(e^{-w})(x) &= e^{-w(x)} + \epsilon \text{Tr}((I-\delta)\nabla^2 e^{-w})(x) \\&+\underbrace{\int_0^\epsilon \int_0^\tau (\sum_{m,n=1}^d (1-\delta_m)^2(1-\delta_n)^2\Gdelta_{s}\partial^2_{n}\partial^2_m e^{-w})(x)\ud s\ud \tau }_{\epsilon^2r_{\epsilon}(x)},
		\end{align*}
                where $\partial^2_m:=\frac{\partial^2}{\partial
                  x_n^2}$, and $\nabla^2 e^{-w}$ denotes is the Hessian matrix of $e^{-w}$.

		Using the property that $\Gdelta_s \partial_nf
                = \partial_n \Gdelta f$,  $\|\Gdelta_t(f)\|_{L^\infty}
                \leq \|f\|_{L^\infty}$ and the assumption (A1) that $w \in C^\infty_b(\Re^d)$, we conclude that $r_\epsilon \in C^\infty_b(\Re^d)$. 
		Therefore,
		\begin{align*}
		\log(G_{\epsilon}\pr(x)) 
		=&\log(\pr_g(x;\Sigma+2\epsilon I)) -\\&\underbrace{(w  -\log(1+ \epsilon\text{tr}((I-\delta)e^{w}\nabla^2e^{-w})+\epsilon^2e^{w}r_\epsilon))\vert_{(I-\delta)x}}_{w^{(1)}_\epsilon(x)}.
		\end{align*}
		The asymptotic expansion of $w_\epsilon^{(1)}$, as $\epsilon
                \to 0$, is obtained as 
		\begin{align*}
		w^{(1)}_\epsilon(x) &= w(x) - 2\epsilon x^\top \Sigma^{-1} \nabla w(x)  - \epsilon e^{w}\Delta e^{-w}(x) +O(\epsilon^2),
		\end{align*}
		where the remainder term has at most linear growth as $|x|\to \infty$.
		
		Substituting the asymptotic expression for
                $\log(G_{\epsilon}\pr(x))$ in~\cref{eq:ueps_u_log-Log},
		\begin{align*}
		U_\epsilon(x) &= U(x) -\frac{1}{2}\log (\pr_g(x;\Sigma)) +\frac{1}{2}w(x) + \frac{1}{2}\log (\pr_g(x;\Sigma+2\epsilon I)) -\frac{1}{2}w^{(1)}_\epsilon(x)\\
		&=U(x) + \frac{\epsilon}{2}x^\top
                  \Sigma^{-1}(\Sigma+2\epsilon I)^{-1}x -
                  \frac{\epsilon}{2}\text{Tr}(\Sigma^{-1})\\&\quad \quad+\epsilon x^\top \Sigma^{-1}\nabla w(x) + \frac{\epsilon}{2}(|\nabla w(x)|^2-\Delta w(x)) + O(\epsilon^2) \\
		&=U(x) + \underbrace{\frac{\epsilon}{2}|\Sigma^{-1}x + \nabla w(x)|^2 - \frac{\epsilon}{2}(\trace(\Sigma^{1})+\Delta w(x))}_{2\epsilon W(x) +\frac{\epsilon}{2} \Delta V(x)} + O(\epsilon^2),
		\end{align*}
		where the remainder $O(\epsilon^2)$ error term has at
                most quadratic growth as $|x| \to \infty$. This
                concludes the proof of approximation~\cref{eq:Ueps-U}.

Based on this above calculation, the following
                estimate for an upper bound of the function
                $U$ is obtained (it is used in the proof of~\cref{prop:DV3}): 
	\begin{align}
	U_\epsilon(x) 
	&\leq \frac{1}{4} x^\top \Sigma^{-1}x + \epsilon( | \Sigma^{-1}x|^2 + \|\nabla w\|_{L^\infty}^2 + \|\Delta V\|_{L^\infty} ) + \epsilon^2(C_1|x|^2+C_2) \nonumber\\
	&\leq \frac{1}{8\sigma_1^2}|x|^2 + \frac{\sigma_1^2}{8}(  \|\nabla w\|_{L^\infty}^2 + \|\Delta V\|_{L^\infty} + \frac{C_2\sigma_1^2}{8}), \label{eq:ueps_upperbound}
	\end{align} 
where recall $\sigma_1^2=\lambda_\text{min}(\Sigma)$.
\medskip
		
		Next, the approximation~\cref{eq:Weps-W} is derived.
                Using the definition
		\begin{equation*}
		\epsilon W_\epsilon(x) = U_\epsilon(x) + \log(G_\epsilon e^{-U_\epsilon}(x)).
		\end{equation*}
		By repeating the steps, just used to approximate
                $\log(G_\epsilon(\pr))$, it is shown
		\begin{align*}
		\log(G_\epsilon(e^{-\Ueps})) = 	\log(\pr_g(x;2\Sigma(I+\delta)^{-1}+2\epsilon I)) - w^{(2)}_\epsilon(x),	
		\end{align*}
		where
		\begin{align*}
		w^{(2)}_\epsilon(x) 
		&= w(x)-\frac{1}{2}w^{(1)}_\epsilon(x) -\frac{\epsilon}{2} x^\top\Sigma^{-1}\nabla w(x) -\epsilon(\frac{1}{4}|\nabla w(x)|^2-\frac{1}{2}\Delta w(x))+  O(\epsilon^2).
		\end{align*}
		Therefore,
		\begin{align*}
		\epsilon W_\epsilon(x)&= -\log(\pr_g(x;2\Sigma(I+\delta)^{-1}))  + \log(\pr_g(x;2\Sigma(I+\delta)^{-1}+2\epsilon I)) \\ &\quad\quad+w(x)- \frac{1}{2}w^{(1)}_\epsilon(x) - w^{(2)}_\epsilon(x) + O(\epsilon^2)\\&= \frac{2\epsilon}{4}x^\top (I+\delta)\Sigma^{-1}(2\Sigma(I+\delta)^{-1}+2\epsilon I)^{-1} x -\frac{\epsilon}{2} \text{Tr}(\Sigma^{-1}) \\
		&\quad\quad+\epsilon x^\top\Sigma^{-1}\nabla w(x) +\frac{\epsilon}{2}(\frac{1}{4}|\nabla w(x)|^2-\frac{1}{2}\Delta w(x))+  O(\epsilon^2)\\
		&= \epsilon\underbrace{(\frac{1}{4}|\Sigma^{-1}x + \nabla w(x)|^2 -\frac{1}{2}(\text{Tr}(\Sigma^{-1}) + \Delta w(x)))}_{W(x)} + O(\epsilon^2),
		\end{align*}
		where the error term has at most quadratic growth as
                $|x|\to \infty$. This concludes the proof of the
                approximation~\cref{eq:Weps-W}.

                Based on this above calculation, the following
                estimate for a lower bound of the function
                $W_\epsilon$ is obtained (it is used in the proof of ~\cref{prop:DV3}): 
	\begin{align}
	W_\epsilon(x) 
	&= \frac{1}{4}|\Sigma^{-1}x+\nabla
          w(x)|^2-\frac{1}{2}(\text{Tr}(\Sigma^{-1}) + \Delta w(x))  +
          \epsilon r^{(2)}_\epsilon(x) \nonumber \\
	&\geq \frac{1}{8}|\Sigma^{-1}x|^2 - \frac{1}{2}(\|\nabla
          w\|_{L^\infty}^2  +\text{Tr}(\Sigma^{-1}) +\| \Delta w\|_{L^\infty}
          ) - \epsilon (C_1|x|^2 +C_2) \nonumber \\
	&\geq \alpha |x|^2 - \beta,  \label{eq:weps_lower_bound}
	\end{align}
	where $\alpha = \frac{1}{16\sigma_d^4}$, $\beta =
        \frac{1}{2}(\|\nabla w\|_{L^\infty}^2 +\text{Tr}(\Sigma^{-1}) +\|
        \Delta w\|_{L^\infty} + \frac{C_2}{8\sigma_1^2}) $ and
        $\epsilon\leq \frac{1}{16C_1\sigma_d^4}$ (where recall $\sigma_d^2 = \lambda_\text{max}(\Sigma)$). 
		\item Let $\tilde{P}_t$ denote the semigroup
                 for the weighted Laplacian $\Delta_q$
                  with the density $q(x)=e^{-2U_\epsilon(x)}$.  We break
                  the error into two parts:
		\begin{equation*}
		\|\Teps^n f - P_t f\|_{L^2(\pr)} \leq 	\|\Teps^n f - \tilde{P}_t f\|_{L^2(\pr)} + 	\| \tilde{P}_t f-P_t f\|_{L^2(\pr)}.
		\end{equation*} 
                The bounds for the two terms on the right-hand side are
                derived in the following two steps:  

                \textbf{Step 1.} Using the stochastic
                representation~\cref{eq:Pt-Feynman-Kac}-\cref{eq:Teps-Feynman-Kac},
		\begin{align*}
		(T_{\epsilon}^n -\tilde{P}_t)f(x)  = e^{U_\epsilon(x)} \Expect\left[e^{-U_\epsilon(B^x_{2t})}f(B^x_{2t})\;\zeta_t \right],
		\end{align*}
		where $\zeta_t := e^{-\epsilon\sum_{k=0}^{n-1}W_{\epsilon}(B^x_{2k\epsilon})} - e^{ -\int_0^t W(B^x_{2s})\ud s }$.
		By the Cauchy-Schwartz inequality
		\begin{align*}
		|(T_{\epsilon}^n -\tilde{P}_t)f(x)|  &\leq
                                                       e^{U_\epsilon(x)}
                                                       \;\Expect[|f(B^x_{2t})|^2e^{-2U_\epsilon(B^x_{2t})}]^{\frac{1}{2}}
                                                       \; \Expect\left[|\zeta_t|^2\right]^{\frac{1}{2}}.
		\end{align*}
		Next we obtain a bound for $\zeta_t$. Upon using the inequality $|e^{-x}-e^{-y}| \leq e^{-\min(x,y)}|x-y|$, 
		\begin{align}
		|\zeta_t|
&\leq
  e^{-C}\left|\sum_{k=0}^{n-1}\epsilon(W_{\epsilon}(B^x_{2k\epsilon})-W(B^x_{2k\epsilon})
  )\right|+e^{-C}\left|\int_0^t W(B^x_{2s})\ud s -
  \sum_{k=0}^{n-1}\epsilon W(B^x_{2k\epsilon})\right|,
\label{eq:zetatbound}
		\end{align}
		where  $C = t\min(\min_{x\in\Re^d} W(x),
                \min_{x\in\Re^d} W_\epsilon(x))$. Now, $C$ is finite
                because, as $|x|\to \infty$, $W(x)\rightarrow\infty$  (Assumption A1) and
                $W_\epsilon(x) \rightarrow\infty$
                (by~\eqref{eq:Weps-W}).  As a result, by triangle inequality, $\Expect[|\zeta_t|^2]^{\frac{1}{2}}\leq e^{-C}\Expect[| \text{(first term)} |^2]^{\frac{1}{2}}  + e^{-C}\Expect[|\text{(second term)}|^2]^{\frac{1}{2}}$. 
		
                The expectation of the first term  is bounded
                as follows:
		\begin{align*}
		\Expect\left[\left| \sum_{k=0}^{n-1}\epsilon(W_{\epsilon}(B^x_{2k\epsilon})-W(B^x_{2k\epsilon}) )\right|^2\right]^{\frac{1}{2}}&\leq\sum_{k=0}^{n-1}\epsilon \Expect[|W_\epsilon(B^x_{2k\epsilon})-W(B^x_{2k\epsilon})|^2]^{\frac{1}{2}} \\&\leq \sum_{k=0}^{n-1} \epsilon^2\Expect[(\CA |x+B^x_{2k\epsilon}|^2+\CB)^2]^{\frac{1}{2}}\\
		&\leq \sum_{k=0}^{n-1}\epsilon^2(2\CA |x|^2+2\CA\Expect[|B^{x}_{2k\epsilon}|^4]^{\frac{1}{2}}+\CB)\\
		&\leq \epsilon t\left[2\CA|x|^2+ 6\CA t +\CB   \right], 
		\end{align*}
		where the second inequality follows from 
                the bound $|W_\epsilon(x)-W(x)|=\epsilon
                |r^{(2)}_\epsilon(x)|\leq \epsilon (C_1|x|^2+C_2)$ for
                some constants $C_1,C_2$ (see~\cref{eq:Weps-W}). 
		
		The expectation of the second term in~\cref{eq:zetatbound} is bounded
                as follows:
		\begin{align*}
		\Expect\left[\left|\int_0^t W(B^x_{2s})\ud s - \sum_{k=0}^{n-1}\epsilon W(B^x_{2k\epsilon})\right|^2\right]^{\frac{1}{2}}&\leq \epsilon  (\Expect\bigg[\int_0^t |\nabla W(B^x_{2s})|^2\ud s \bigg]^{\frac{1}{2}}+t\|\Delta W\|_\infty)\\&\leq \epsilon  (\Expect\bigg[\int_0^t |C_3|x+B_s|+C_4|^2\ud s \bigg]^{\frac{1}{2}}+tC_5)\\
		&\leq \epsilon t^{\frac{1}{2}}(C_3 |x|+C_3 t + C_4) + \epsilon C_5 t , 
		\end{align*}
		where the Taylor expansion of $W(x)$ is used to obtain
                the first inequality, and for the second inequality,
                Assumption (A1) is used to bound $|\nabla W(x)| \leq
                \|\Sigma^{-1}\| |x| + \|\nabla w\|_{L^\infty}= C_3|x|+C_4$
                and $\|\Delta W\|_{L^\infty} \leq \|\Sigma^{-1}\| +
                \|\Delta w\|_{L^\infty} = C_5$. 

Putting together the two expectation bounds ,
		\begin{equation*}
		|(T_{\epsilon}^n -\tilde{P}_t)f(x)|  \leq e^{U_\epsilon(x)} \Expect[f^2(B^x_{2t})e^{-2U_\epsilon(B^x_{2t})}]^{\frac{1}{2}} C\epsilon t^{\frac{1}{2}}(|x|^2+1),
		\end{equation*}
		where $C$ is a constant that only depends on $t_0$.
		Upon taking the $L^2(\pr)$ norm
		\begin{align*}
		\|T_{\epsilon}^n -&\tilde{P}_tf\|_{L^2(\pr)}^2  \\&\leq C\epsilon^2 t
		\int \int f^2(y) \pr_g(x-y;2t)(|x|^2+1)^2e^{2U_\epsilon(x)-2U_\epsilon(y)} \pr(x)\ud y\ud x\\
		&\leq C\epsilon^2 t
		\int \int f^2(y) \pr_g(x-y;2t)(|x|^4+1)e^{2\epsilon(2W(x)+\frac{1}{2}\Delta V(x) + \epsilon r_\epsilon^{(1)}(x))}\pr(y) \ud y\ud x\\
		&\leq C\epsilon^2  t 
		\int f^2(y) (|y|^4+12 t^2 + 1)e^{8\epsilon W(y)+O(\epsilon^2)} \pr(y)\ud y
		\\
		&\leq C\epsilon^2 t\left[\int (|x|^4+ 12 t^2 +1)^2e^{8\epsilon W(x)+O(\epsilon^2)}\pr(x)\ud x\right]^{1/2}\left[\int f^4(x)\pr(x)\ud x\right]^{1/2}\\
		&\leq C \epsilon^2 t \|f\|^2_{L^4(\pr)}.
		\end{align*}

		\textbf{Step 2.} Because $P_t$ and $\tilde{P}_t$ are semigroups with generators $\Delta_\pr$ and $\Delta_q$, respectively, we have the identity:
		$
		P_t f  - \tilde{P}_t f = \int_0^t P_{t-s}(\Delta_\pr - \Delta_q) \tilde{P}_sf \ud s$. 
		Upon taking the $L^2(\pr)$ norm of both sides, using
                the triangle inequality, because $P_t$ is contraction on $L^2(\pr)$,
		\begin{align*}
		\|P_t f  - \tilde{P}_t f\|_{L^2(\pr)} \leq  \int_0^t \|(\Delta_\pr - \Delta_q) \tilde{P}_sf\|_{L^2(\pr)} \ud s.
		\end{align*}
		Now,
		\begin{align*}
		&\|(\Delta_\pr - \Delta_q) \tilde{P}_sf\|^2_{L^2(\pr)} = 4\int |(\nabla U(x) - \nabla U_\epsilon(x)) \cdot \nabla (\tilde{P}_s f)(x)|^2 \pr(x)\ud x\\
		&\quad\leq 4\left[\int |\nabla U(x)-\nabla U_\epsilon(x)|^4\frac{\pr(x)}{q(x)} \pr(x) \ud x\right]^{1/2} \left[\int |\nabla \tilde{P}_s f(x)|^4 q(x)\ud x\right]^{1/2}\\
		&\quad\leq 4\epsilon^2\left[\int |C_1|x|+C_2|^4e^{-2U(x)+2U_\epsilon(x)} \pr(x) \ud x\right]^{1/2} \|\nabla f\|^2_{L^4(q)}\\
		&\quad \leq C\epsilon^2 \|\nabla f\|^2_{L^4(\pr)},
		\end{align*}
		where the identity $\Delta_\pr f - \Delta_q f= 2\nabla
                U \cdot \nabla f  -  2 \nabla U_\epsilon \cdot \nabla
                f$ is used in the first step, the Cauchy-Schwartz
                inequality in the second step, and the bounds $|\nabla
                U_\epsilon(x) - \nabla U(x)| \leq \epsilon
                (C_1|x|+C_2)$ and $\|\nabla \tilde{P}_s
                f\|_{L^4(q)}\leq \|\nabla f\|_{L^4(q)}$ in the third
                step.
		

\medskip

	Combining the two sets of bounds in steps 1 and 2, one obtains~\cref{eq:Teps-Peps-limit}.
	\end{romannum}
	
\end{proof}

\section{Proof of the~\cref{prop:DV3}}\label{apdx:DV3}
\begin{proof}
	
	(i)  The Lyapunov condition~\cref{eq:DV3}, known as DV(3) of~\cite{konmey12a}, is the necessary and sufficient condition for geometric ergodicity  (and in fact the stronger $U_\epsilon$-uniform ergodicity)~\cite[Thm. 15.0.1]{MT}. The distribution $\preps$ is invariant because $\forall f \in C_b(\Re^d)$,
	\begin{align*}
\int \Teps f(x)\preps(x)\ud x & =\int \int   \frac{1}{\neps(x)}\keps(x,y)f(y)\pr(y)\ud y \frac{\neps(x)\pr(x)}{\int \neps(z)\pr(z)\ud z}\ud x\\&=
	\frac{1}{\int \neps(z)\pr(z)\ud z}  \int \int \keps(x,y) \pr(x) \ud x f(y)\pr(y)\ud y\\
	&= \frac{1}{\int \neps(z)\pr(z)\ud z} \int f(y) \neps(y)\pr(y) \ud y
	=\int f(x)\preps(x)\ud x.
	\end{align*}
	
	(ii)  The invariant density $\preps$ is reversible because $\forall f,g\in C_b(\Re^d)$
	\begin{align*}
\int g(x)\Teps f(x) \preps(x)\ud x &= \int \int g(x) \frac{\keps(x,y)}{\neps(x)}f(y)\pr(y) \frac{\neps(x)\pr(x)}{\int \neps(z)\pr(z)\ud z}\ud y\ud x\\
	&=\frac{1}{\int \neps(z)\pr(z)\ud z} \int \Teps g(y) \neps(y) f(y) \pr(y)\ud y \\&=
	 \int  f(y) \Teps g(y) \preps(y)\ud y .
	\end{align*} The spectral gap follows from Lyapunov
        condition~\cref{eq:DV3} and the fact that the chain is
        reversible~\cite[Thm 2.1]{roberts1997geometric}.  The spectral
        gap is denoted as $\lambda$.  
	
	(iii) The solution $\phieps$ satisfies the bound:
	\begin{align*}
	\|\phi\|_{L^2(\preps)}&\leq
                                \frac{\|\sum_{k=0}^{n-1}\epsilon\Teps^k
                                (h-\hat{h}_{\pr_\epsilon})\|_{L^2(\preps)}}{1-\|\Teps^n\|_{L^2_0(\preps)}}
\leq \frac{\epsilon n \|h\|_{L^2(\preps)}}{1-\|\Teps^n\|_{L^2_0(\preps)}}\leq \frac{t\|h\|_{L^2(\preps)}}{\lambda}.
	\end{align*}
	
	It remains to verify the Lyapunov
        condition~\cref{eq:DV3}: Using~\cref{eq:Teps-Feynman-Kac}
	\begin{align*}
	e^{-U_\epsilon}T_{\epsilon}^n e^{U_\epsilon}(x)
          &=\Expect[e^{-\epsilon\sum_{k=0}^{n-1}W_{\epsilon}(B^x_{2k\epsilon})}] \leq \Expect[e^{-\epsilon\sum_{k=0}^{n-1}(\alpha|B^x_{2k\epsilon}|^2-\beta)}],
	\end{align*}
        where the second inequality follows from using the lower bound $W_\epsilon(x) \geq \alpha |x|^2 - \beta$
        derived in~\cref{eq:weps_lower_bound}.

	We now claim that 
	\begin{equation}\label{eq:gamma-claim}
	\Expect[e^{-\epsilon\sum_{k=0}^{m-1}(\alpha|B^x_{2k\epsilon}|^2-\beta)}]=e^{-\alpha_m|x|^2+\beta_m},
	\end{equation}
	for $m=1,\ldots,n$
	where $\{\alpha_m\}_{m=1}^n$ and  $\{\beta_m\}_{m=1}^n$ are
        defined using the recursions:
	\begin{align*}
	\alpha_{m+1}&=\alpha \epsilon + \frac{\alpha_m}{1+4\epsilon\alpha_m},\quad\alpha_1=\alpha \epsilon \\\beta_{m+1} &= \beta_m + \beta \epsilon - \frac{1}{2}\log(1+4\epsilon\alpha_m),\quad \beta_1=\beta \epsilon.
	\end{align*}
	Assuming for now that the claim is true 
	\begin{align*}
	\log(e^{-U}T_{\epsilon}^n e^U(x)) &\leq
                                              \log(\Expect[e^{-\epsilon\sum_{k=0}^{n-1}(\alpha|B^x_{2k\epsilon}|^2-\beta)}])
                                              = -\alpha_n|x|^2+\beta_n. 
	\end{align*}
	An upper-bound for $\beta_n$ and a lower-bound
        for $\alpha_n$ are obtained as follows: 
\begin{enumerate}
\item For the sequence $\{\beta_m\}_{m=1}^n$,
	\begin{equation*}
	\beta_{m+1}\leq \beta_m + \beta \epsilon,\quad \Rightarrow \quad \beta_n \leq \beta_1 + (n-1) \beta \epsilon  = \beta t.
	\end{equation*} 
\item For the sequence $\{\alpha_m\}_{m=1}^n$,
	\begin{equation*}
	\alpha_{m+1} \leq \alpha_m + \alpha \epsilon,\quad \Rightarrow\quad \alpha_m \leq \alpha_1+(n-1)\alpha\epsilon = \alpha t.
	\end{equation*}
Therefore,
\[
\alpha_{m+1}\geq \frac{\alpha_m}{1+4\epsilon\alpha t} + \alpha \epsilon,\quad\alpha_1=\alpha \epsilon.
\]
It then follows
\[
\alpha_n \geq \alpha t e^{-4\alpha t^2}.
\]
\end{enumerate}
Upon using the two bounds
	\begin{align*}
	\log(e^{-U_\epsilon}T_{\epsilon}^n e^{U_\epsilon}(x))
	&\leq -\alpha te^{-4\alpha t^2} |x|^2+ \beta t
	\leq -a t U_\epsilon(x) + bt,
	\end{align*}
	where the second inequality follows from using the upper
        bound $U_\epsilon(x)  \leq \frac{1}{8\sigma_1^2}|x|^2 + \frac{\sigma_1^2}{8}C$
        derived in~\cref{eq:ueps_upperbound}.  The following estimates
        are obtained for constants
\[
a= 8\sigma_1^2\alpha e^{-4\alpha t_0^2},\quad\quad b = \beta +
C \sigma_1^4\alpha e^{-4\alpha t_0}.
\]

		
	
It remains to prove the claim~\cref{eq:gamma-claim}.  The constants
$\alpha_1$ and $\beta_1$ 
for $m=1$ are easily verified by direct evaluation and for $m>1$,
	\begin{align*}
	\Expect[e^{-\epsilon\sum_{k=0}^{m}(\alpha|B^x_{2k\epsilon}|^2-\beta)}]&=\Expect[e^{-\epsilon \alpha|x|^2+\beta\epsilon}e^{-\alpha_m|B_{2\epsilon}|^2+\beta_m}]\\
	&= e^{-\epsilon\alpha|x|^2-\frac{\alpha_m}{1+4\epsilon\alpha_m}
          |x|^2 + \epsilon\beta +
          \beta_m - \frac{1}{2}\log(1+4\epsilon\alpha_m)}.
	\end{align*}

	The minorization inequality~\cref{eq:minorization} is obtained next. For $|x|\leq R$:
	\begin{align*}
	T_{\epsilon}^n\mathds{1}_A(x) &= e^{U_\epsilon(x)}
                           \Expect[e^{-\epsilon\sum_{k=0}^{n-1}W_\epsilon(B^x_{2k\epsilon})}e^{-U_\epsilon(B^x_{2t})}\mathds{1}_{B^x_{2t}\in
                           A}]\\&\geq 
\frac{e^{\min_{|x|\leq R}
                                  U_\epsilon(x)}}{e^{\max_{|x|\leq R+10} (U_\epsilon(x)+tW_\epsilon(x))}} \P([B^x_{2t} \in A]\cap[\underset{s \in[0,2t]}{\sup}|B_{s}|\leq 10])
	\geq \delta \nu(A),
	\end{align*}
	where 
	\begin{align*}
	\nu(A) &= \P(\{B^x_{2t} \in A\}|\{\underset{s \in[0,2t]}{\sup}|B_{s}|\leq 10\})\\
	\delta &=\frac{e^{\min_{|x|\leq R,\epsilon \in(0,1)}
	                                  U_\epsilon(x)}}{e^{\max_{|x|\leq R+10,\epsilon \in(0,1)} (U_\epsilon(x)+tW_\epsilon(x))}} (1-2e^{-\frac{50}{t_0}}),
	\end{align*}
	because $\P(\underset{s \in[0,2t]}{\sup} B_s\geq 10) \leq
        e^{-\frac{100}{2t}}\leq e^{-\frac{50}{t_0}}$. 
\end{proof}

\section{Proof of the~\cref{thm:bias}}\label{apdx:bias}
\begin{proof} 
	\begin{romannum}
		\item The existence of the solution is proved in~\cref{prop:DV3}.
		\item We break the error into two parts: 
		\begin{equation*}
		\|\phieps-\phi\|_{L^2(\preps)}\leq \|\phieps-\tilde{\phi}\|_{L^2(\preps)} +\|\tilde{\phi}-\phi\|_{L^2(\preps)},
		\end{equation*}
		where $\tilde{\phi}$ is the solution to the fixed
                point equation $\tilde{\phi} = P_\epsilon \tilde{\phi}
                + \epsilon(h-\hat{h}_\pr)$ with the exact semigroup
                $P_\epsilon$. The bounds for the two terms on the
                right-hand side are derived in the following two steps:

\textbf{Step 1.} Iterating the  formula $\tilde{\phi} = P_\epsilon \tilde{\phi} + \epsilon(h-\hat{h}_\pr)$ for  $n=\lfloor \frac{1}{\epsilon}\rfloor$ times yields,
		\begin{equation*}
		\tilde{\phi} = P_{\epsilon}^n \tilde{\phi} + \sum_{k=0}^{n-1} \epsilon P_\epsilon^k (h-\hat{h}_\pr),
		\end{equation*}
		and subtracting this from~\cref{eq:fixed-point-n} gives
		\begin{align*}
		\phieps - \tilde{\phi} = \Teps^n (\phieps - \tilde{\phi}) + (\Teps^n - P_\epsilon^n)\tilde{\phi} + \sum_{k=0}^{n-1} \epsilon (\Teps^k - P_\epsilon^k) h + t(\hat{h}_\pr-\hat{h}_{\pr_\epsilon}). 
		\end{align*}
		This forms a (discrete) Poisson equation whose
                solution exists and is bounded according
                to~\cref{prop:DV3}:
		\begin{equation}
		\begin{aligned}
		\|\phieps - &\tilde{\phi}\|_{L^2(\preps)} \\&\leq
                \frac{n\epsilon}{\lambda} \left( \|{(\Teps^n -
                  P_\epsilon^n)\tilde{\phi}}\|_{L^2(\preps)} +
                \|{\sum_{k=0}^{n-1} \epsilon (\Teps^k - P_\epsilon^k)
                  h}\|_{L^2(\preps)}+
                n\epsilon|\hat{h}_\pr-\hat{h}_{\pr_\epsilon}| \right) \\
		&\leq \frac{C n\epsilon}{\lambda} \left( \|{(\Teps^n -
                    P_\epsilon^n)\tilde{\phi}}\|_{L^2(\pr)} +
                  \|{\sum_{k=0}^{n-1} \epsilon (\Teps^k -
                    P_\epsilon^k) h}\|_{L^2(\pr)}+
                  n\epsilon|\hat{h}_\pr-\hat{h}_{\pr_\epsilon}| \right),
		\end{aligned}
	\label{eq:phieps-phitilde}
	\end{equation}
		where we used $\|\cdot\|_{L^2(\preps)}\leq
                C\|\cdot\|_{L^2(\pr)}$ in the second step. This is
                true because $\preps(x) = e^{-U_\epsilon(x)}G_\epsilon
                (e^{-U_\epsilon})(x) = \pr(x) e^{-3\epsilon W(x) -
                  \epsilon \Delta V(x) + O(\epsilon^2)}\leq C\pr(x)$
                using the formula~\cref{eq:Ueps-U}.

It remains to bound the three terms inside the bracket in~\cref{eq:phieps-phitilde}:
		\begin{align*}
		\|\Teps^n \tilde{\phi} - P_{n\epsilon} \tilde{\phi} \|_{L^2(\pr)} 
		&\leq C \epsilon \sqrt{n\epsilon}(\|\tilde{\phi}\|_{L^4(\pr)} + \|\nabla \tilde{\phi} \|_{L^4(\pr)})\\
		\|\sum_{k=0}^{n-1} \epsilon (\Teps^k - P_\epsilon^k) h\|_{L^2(\pr)}
		&\leq  C \epsilon (n\epsilon)\sqrt{n\epsilon}
                  (\|h\|_{L^4(\pr)} + \|\nabla h \|_{L^4(\pr)})\\
                  |\hat{h}_{\pr_\epsilon} - \hat{h}_\pr| &\leq \int
                                                 |h(x)|\pr(x)|e^{-3\epsilon
                                                 W(x) - \epsilon
                                                 \Delta V(x) +
                                                 O(\epsilon^2)}-1|\ud
                                                 x \leq \epsilon C \|h\|_{L^2(\pr)},
		\end{align*}
		by using the error estimates~\cref{prop:Tepsn-convergence}-(iii). 
Therefore,
		\begin{align*}
		\|\phieps - \tilde{\phi}\|_{L^2(\preps)} \leq  \epsilon C (\|h\|_{L^4(\pr)} + \|\nabla h\|_{L^4(\pr)} + \|\tilde{\phi}\|_{L^4(\pr)} + \|\nabla \tilde{\phi} \|_{L^4(\pr)}).
		\end{align*}
		
\textbf{Step 2.} Both $\phi$ and $\tilde{\phi}$ are solutions with the
exact semigroup $P_\epsilon$.  Using the spectral representation~\eqref{eq:spectral_rep},
		\begin{align*}
		\phi = \sum_{m=1}^\infty \frac{1}{\lambda_m} \lr{h}{e_m}e_m,\quad \tilde{\phi} = \sum_{m=1}^\infty \frac{\epsilon}{1-e^{-\epsilon\lambda_m}} \lr{h}{e_m}e_m.
		\end{align*} 
		Therefore,
		\begin{align*}
		\|\tilde{\phi} -\phi\|_{L^2(\pr)}^2= \epsilon^2 \sum_{m=1}^\infty ( \frac{1}{1-e^{-\epsilon\lambda_m}} - \frac{1}{\epsilon\lambda_m})^2 |\lr{h}{e_m}|^2
		\leq \epsilon^2 \|h\|_{L^2(\pr)}^2,
		\end{align*} 
and thus $\|\tilde{\phi} -\phi\|_{L^2(\preps)}\leq C \|\tilde{\phi}
-\phi\|_{L^2(\pr)}\leq \epsilon^2 C \|h\|_{L^2(\pr)}^2$. 
		
\medskip

Combining the estimates from steps 1 and 2,
			\begin{align*}
			\|\phieps - \phi\|_{L^2(\preps)} \leq  \epsilon C (\|h\|_{L^4(\pr)} + \|\nabla h\|_{L^4(\pr)} + \|\tilde{\phi}\|_{L^4(\pr)} + \|\nabla \tilde{\phi} \|_{L^4(\pr)}).
			\end{align*}	
	\end{romannum}
\end{proof}

\section{Proof of the~\cref{prop:TepsN-convergence}}\label{apdx:TepsN-convergence}
\begin{proof}
Denote $\eta_j=(\sqrt{\frac{(\geps * \pr)(X^j)}{\frac{1}{N}\sum_{l=1}^N \geps(X^j,X^l)}}-1)$ and express:
\begin{align*}
\TepsN f(x) = \frac{\int \keps(x,y) f(y)\pr(y)\ud y + \xi^{(N)}_1 + \zeta^{(N)}_1}{\neps(x)  + \xi^{(N)}_2 + \zeta^{(N)}_2},
\end{align*}
where
\begin{align*}
\xi_1^{(N)} &= \frac{1}{N}\sum_{j=1}^N \keps(x,X^j)f(X^j) -
              \Expect[\keps(x,X^j)f(X ^j)], \;\; \zeta_1^{(N)} =
                                                                  \frac{1}{N}\sum_{j=1}^N
                                                                  \keps(x,X^j)f(X^j)\eta_j
\\
\xi_2^{(N)} &= \frac{1}{N}\sum_{j=1}^N \keps(x,X^j)-
              \Expect[\keps(x,X^j)], \quad
\zeta_2^{(N)} = \frac{1}{N}\sum_{j=1}^N \keps(x,X^j)\eta_j.
\end{align*}

\begin{romannum}
\item 
To prove the part-(i) of the~\cref{prop:TepsN-convergence}, the
strategy is to show that as $N\to\infty$ the  stochastic terms
$\xi_1^{(N)},\xi^{(N)}_2,\zeta_1^{(N)},\zeta_2^{(N)}$ converge to zero
almost surely.  We do this in two steps below,
$\xi_1^{(N)},\xi^{(N)}_2$ in step 1, and $\zeta_1^{(N)},\zeta_2^{(N)}$
in step 2.  

\noindent
{\bf Step 1:}  Convergence of $\xi_1^{(N)}$ and $\xi_1^{(N)}$ follows from direct application of the strong law of large numbers (SLLN). The SLLN applies because the summand for $\xi_1^{(N)}$ and $\xi_2^{(N)}$ are independent and identically distributed (i.i.d) and moreover have finite variance:
\begin{align}
\text{Var}\left(\keps(x,X)f(X)\right)&\leq  \frac{C}{\epsilon^{d/2}} \frac{\|f\|_\infty^2\pr(x)}{(g_\epsilon * \pr)^2(x)}\label{eq:var-keps-f}\\
\text{Var}\left(\keps(x,X)\right)&\leq  \frac{C}{\epsilon^{d/2}} \frac{\pr(x)}{(g_\epsilon * \pr)^2(x)},\label{eq:var-keps}
\end{align}
where we used $\geps^2(x,y)\leq C\epsilon^{-d/2}g_{\epsilon/2}(x,y)$.

\noindent
{\bf Step 2:}
In order to show the almost sure convergence of $\zeta_1^{(N)}$ and $\zeta_1^{(N)}$ to zero, we first show that in the limit as $N\to \infty$,
	\begin{align}\label{eq:eta-bound}
|\eta_i| &\leq C \sqrt{\frac{\log(\frac{N}{\delta})}{N\epsilon^{d/2}\qeps(X^i)}}  ,\quad\forall i=1,\ldots,N,\quad 
\end{align}
with probability larger than $1-\delta$ for any arbitrary choice of $\delta \in
(0,1)$. Assuming for now that the claim is true, it then follows
\begin{align}
\zeta_1^{(N)}& \leq 
\sqrt{\frac{C\log(\frac{N}{\delta})}{N\epsilon^{d/2}}}\left( \frac{1}{N}\sum_{j=1}^N \keps(x,X^j)\frac{|f(X^j)|}{\sqrt{\geps * \pr(X^j)}}\right),\label{eq:var-zeta}
\end{align}
with probability larger than $1-\delta$. The
term inside the bracket converges almost surely to its limit
$\Expect[\keps(x,X)\frac{|f(X)|}{\sqrt{\geps * \pr (X)}}]$, by SLLN, because  
\begin{align*}
\Expect\left( \keps(x,X)\frac{|f(X)|}{\sqrt{\geps * \pr (X)}}\right)
&\leq\frac{C\|f\|_\infty \pr(x)}{(\geps * \pr)^{3/2} (x)}.
\end{align*}
The proof that $\zeta_1^{(N)}\overset{\text{a.s.}}{\longrightarrow}0$ is completed by an application of the Borel-Cantelli lemma. Indeed, choose a sequence $\{\delta_N\}_{N=1}^\infty$ given by $\delta_N = \frac{1}{N^2}$. Then $\sum_{N=1}^\infty \P(\zeta^{(N)}_1>\epsilon_N)\leq \sum_{N=1}^\infty\delta_N <\infty$  where $\epsilon_N=\sqrt{\frac{C\log(N^3)}{N\epsilon^{d/2}}}$. Because $\epsilon_N \to 0$, then $\zeta_1^{(N)}\overset{\text{a.s}}{\to}0$. The proof of $\zeta_2^{(N)}\overset{\text{a.s}}{\to}0$ is identical.

It remains to prove the claim~\cref{eq:eta-bound}, which can be established using the Bernstein inequality as follows. We have for any $a>0$: 
\begin{align*}
\P\left(\eta_i \geq a \right) &=
\P\left(\sqrt{\frac{(\geps * \pr)(X^j)}{\frac{1}{N}\sum_{l=1}^N \geps(X^j,X^l)}}\geq 1+a\right)\\&\leq
\P\left(\frac{(\geps * \pr)(X^j)}{\frac{1}{N}\sum_{l=1}^N \geps(X^j,X^l)}\geq 1+a\right)
\\&=\P\left(\frac{(\geps *\pr )(X^j)-\frac{1}{N}\sum_{l=1}^N \geps(X^j,X^l)}{(\geps * \pr )(X^j)}\geq \frac{a}{1+a}\right),
\end{align*}  The random variables $\geps(X^i,X^j)$ are i.i.d,
bounded by ${(4\pi\epsilon)^{-\frac{d}{2}}}$, and the variance
\begin{align*}
\Expect\left[|\geps(X^i,X^j)|^2|X^j\right] 
&\leq 
\frac{1}{(8\pi\epsilon)^{d/2}}(g_{\epsilon/2} * \pr)(X^j). 
\end{align*}
Therefore by Bernstein inequality,
\begin{equation*}
|\eta_i| \leq C \sqrt{\frac{(g_{\epsilon/2}* \pr)(X^j)\log(\frac{2}{\delta})}{N(8\pi\epsilon)^{d/2}(\geps * \pr)(X^j)^2}}, 
\end{equation*}
with probability higher than $1-\delta$. The result is obtained by union bound for $i=1,\ldots,N$ and $\|\frac{g_{\epsilon/2} * \pr}{g_{\epsilon} * \pr}\|_\infty<\infty$.


\item Collecting the estimates~\cref{eq:var-keps-f,eq:var-keps,eq:var-zeta} and application of the Bernstein inequality yields:
\begin{align*}
|\xi_1^{(N)} |&\leq \sqrt{\frac{C\|f\|^2_{L^\infty}\log{(\frac{1}{\delta})}\pr(x)}{N\epsilon^{d/2}(\geps * \pr)^2 (x)}},\quad 
|\xi_2^{(N)}| \leq \sqrt{\frac{C\log{(\frac{1}{\delta})}\pr(x)}{N\epsilon^{d/2}(\geps * \pr)^2 (x)}}\\
|\zeta_1^{(N)}| &\leq \sqrt{\frac{C\|f\|^2_{L^\infty}\log(\frac{N}{\delta})\pr^2(x)}{N\epsilon^{d/2}(g_\epsilon * \pr)^3(x)}},\quad 
|\zeta_2^{(N)}| \leq \sqrt{\frac{C \log(\frac{N}{\delta})\pr^2(x)}{N\epsilon^{d/2}(g_\epsilon * \pr)^3(x)}},
\end{align*}
 with probability larger than $1-4\delta$. Therefore one obtains the bound:
\begin{align*}
|\TepsN f(x)-\Teps f(x)| 
&\leq \sqrt{\frac{C\log(\frac{N}{\delta})\pr(x)}{N\epsilon^{d/2}(g_\epsilon * \pr)^2(x) \neps^2(x)}},
\end{align*}
with probability larger than $1-4\delta$.
%
Upon squaring and integrating both sides with respect to $\pr(x)$ proves the rate:
\begin{align*}
\|\TepsN f - \Teps f&\|_{L^2(\pr)} \leq \sqrt{\frac{C\log(\frac{N}{\delta})}{N\epsilon^{d/2}}} \left(\int \frac{\pr(x)}{(\geps * \pr)^2 (x) \neps^2(x)}\pr(x) \ud x\right)^{1/2}\\
&\leq\sqrt{\frac{C\log(\frac{N}{\delta})}{N\epsilon^{d/2}}}
  \left(\int e^{-2\epsilon|\nabla V(x)|^2 + \frac{3}{2}\epsilon|\nabla
  V(x)|^2}\ud x\right)^{1/2} \leq \sqrt{\frac{C\log(\frac{N}{\delta})}{N\epsilon^{d}}}.  
\end{align*}
\end{romannum}

\end{proof}

\section{Proof of the~\cref{thm:variance}} \label{apdx:variance} 
In the proof of~\cref{thm:variance}, the function space of interest is
$C_b(\Omega)$, the Banach space of continuous bounded functions on (a compact
set) $\Omega\subset \Re^d$
equipped with the $\|\cdot\|_{L^\infty(\Omega)}$ norm. Also, define the space $C_0(\Omega):=\{f
\in C(\Omega) \mid \int f \rho_\epsilon  =0\}$, as subspace of functions in $C_b(\Omega)$ with zero mean.  Consider $\Teps$ and
$\TepsN$ as linear operators from $C_b(\Omega)$ to $C_b(\Omega)$.

Part-(i) has already been proved as part of the \cref{prop:N}.  The
proof of part~(ii) relies on the verification of the following three
conditions:
	\begin{romannum}
		\item The family of operators
                  $\{\TepsN\}_{N=1}^\infty$ is collectively compact,
                  as linear operators on $C_b(\Omega)$.
		\item For any function $f \in C_b(\Omega)$,
		\begin{equation}\label{eq:TepsN-unif-convergence}
		\lim_{N\to \infty} \|\TepsN f - \Teps f\|_{L^\infty(\Omega)}  = 0,\quad \text{a.s.}
		\end{equation}
		\item The operator $(I-\Teps)^{-1}$ is a bounded
                  operator on $C_0(\Omega)$.
	\end{romannum}

Once these three conditions have been verified, the
convergence result~\cref{eq:phiepsN_phieps_conv_result} follows from a
standard result in the approximation theory of the numerical solutions of integral
equations~\cite[Thm. 7.6.6]{hutson2005}.

\medskip

The proof of the three conditions is as follows:
	\begin{romannum}
		\item Condition (i) holds if the set
                  $S=\{\TepsN f;~\forall f \in
                  C_b(\Omega),\|f\|_\infty\leq 1,N\in \mathbb{N}\}$ is
                  relatively compact. Relative compactness follows
                  from an application of the Arzela-Ascoli theorem.
                  In order to apply Arzela-Ascoli theorem, we need to
                  show that $S$ is uniformly bounded and
                  equicontinuous. The two conditions hold because 
		\begin{align}
		&\text{(unif. boundedness)}\quad|\TepsN f(x)|\leq \|f\|_\infty  \frac{\sum_{i=1}^N \kepsN(x,X^i)}{\sum_{i=1}^N \kepsN(x,X^i)}\leq 1,\nonumber\\
	&\text{(equicontinuous)}\quad	|\TepsN f(x) - \TepsN f(x')| \leq \frac{L}{\epsilon}|x-x'|e^{\frac{L}{2\epsilon}|x-x'|},    \label{eq:equicon_inequality}
		\end{align}
		for all $x,x'\in \Omega$ and $f$ such that $\|f\|_{L^\infty}\leq 1$. The detailed calculation to obtain the second inequality appears at the end of the proof.

		\item Fix a function $f\in C_b(\Omega)$. From~\cref{prop:TepsN-convergence}-(i), we know that $\TepsN f(x)$ converges to $\Teps f(x)$ almost surely pointwise for all $x\in \Omega$. Because $\Omega$ is compact and $\{\TepsN f\}$ is equicontinuous, pointwise convergence implies  uniform convergence~\cref{eq:TepsN-unif-convergence}.

		\item From parts~(i) and (ii) above, it can be
                  concluded that $\Teps$ is a compact operator.  
		Therefore, using the Fredholm alternative theorem, in
                order to show $(I-\Teps)^{-1}$ is bounded, it is
                enough to show that $I-\Teps$ is injective. The
                injectivity property is shown by contradiction. Suppose there exists a function $f \in C_0(\Omega)$ such that $f-\Teps f=0$. Let $x_0 \in \Omega$ be a point that achieves the maximum of the function $f$. Such a point exists because $f$ is continuous and $\Omega$ is compact.  Evaluating $f-\Teps f=0$ at $x=x_0$ yields
		\begin{align*}
		0=f(x_0)-\Teps f(x_0) = \frac{1}{\neps(x_0)}\int \keps(x_0,y)(f(x_0)-f(y))\ud y . 
		\end{align*}		
		 Because $\keps(x_0,y)>0$ and $f(y)\leq f(x_0)$, this
                 implies $f(y)=f(x_0)$ for all $y\in
                 \Omega$. Therefore, the function $f$ is a
                 constant. But the only constant function in
                 $C_0(\Omega)$ is zero. Hence $I-\Teps$ is injective and its inverse $(I-\Teps)^{-1}$ is bounded. 
	\end{romannum}

It remains to prove the equicontinuity
inequality~\cref{eq:equicon_inequality} which is done next:
	\begin{align*}
		|\TepsN f(x) - \TepsN f(x')| &\leq |\frac{\sum_{i=1}^N
                                               \kepsN(x,X^i)f(X^i)}{\sum_{i=1}^N
                                               \kepsN(x,X^i)}-\frac{\sum_{i=1}^N
                                               \kepsN(x',X^i)f(X^i)}{\sum_{i=1}^N
                                               \kepsN(x',X^i)}|\\
		&\leq 2\|f\|_{L^\infty(\Omega)} \frac{\sum_{i=1}^N \kepsN(x,X^i)|1-\frac{\keps(x',X^i)}{\keps(x,X^i)}|}{\sum_{i=1}^N \keps(x,X^i)}\\
		&\leq 2\max_{i=1,\ldots,N} |1-\frac{\keps(x',X^i)}{\keps(x,X^i)}|\leq \frac{L}{\epsilon}|x-x'|e^{\frac{L}{2\epsilon}|x-x'|}, 
		\end{align*} 
		where the last inequality is obtained as follows
		\begin{align*}
		|1-\frac{\keps(x',X^i)}{\keps(x,X^i)}| =
                  |1-\frac{\geps(x',X^i)}{\geps(x,X^i)}| = | 1-
                  e^{-\frac{(x'-x)\cdot(x'+x-2X^i)}{4\epsilon}}| \leq \frac{R}{2\epsilon}|x-x'|e^{\frac{R}{2\epsilon}|x-x'|},  
		\end{align*}
		where $R=\max_{x,y\in \Omega}|x-y|$ is the diameter of
                $\Omega$. 
  \section{Proof of \cref{prop:convergence_to_constant_gain}}
\label{Sec:prop:convergence_to_constant_gain}
\begin{romannum}
\item Consider first the finite-$N$ case. In the asymptotic limit as $\epsilon\rightarrow\infty$, we have  $(2\pi\epsilon)^{d/2}\geps(x,y)=1 + O(\frac{1}{\epsilon})$. Therefore,
\begin{align*}
& \kepsN(x,y) = \frac{\geps(x,y)}{\sqrt{\frac{1}{N}\sum_{j=1}^N \geps(x,X^j)}\sqrt{\frac{1}{N}\sum_{j=1}^N \geps(y,X^j)}} = 1 + O(\frac{1}{\epsilon})\\
&  \nepsN(x) =  \frac{1}{N}\sum_{i=1}^N \kepsN(x,X^i) = 1 + O(\frac{1}{\epsilon}),
\end{align*}
and
\begin{equation*}
\TepsN f(x) = \frac{\frac{1}{N}\sum_{j=1}^N\keps(x,X^j)f(X^j)}{\nepsN(x)}=\frac{1}{N}\sum_{j=1}^N f(X^j) + O(\frac{1}{\epsilon}).
\end{equation*}

It is also easy to see, e.g., by using a Neumann series solution, that
in the asymptotic limit as $\epsilon\rightarrow\infty$, the solution
of the fixed-point equation~\cref{eq:finite-N-fixed-pt}~is given by 
\[
\phivec = \epsilon (\hvec-\frac{1}{N}\sum_{l=1}^N \hvec_l) + O(1).
\]
Therefore,
\begin{align*}
r &=  \phivec + \epsilon\hvec= 2 \epsilon \hvec - \epsilon(\frac{1}{N}\sum_{l=1}^N \hvec_l)+  O(1)\\
s_{ij} &=   \frac{1}{2\epsilon}\Ten_{ij}(r_j-\sum_{k=1}^N
\Ten_{ik}r_k) = \frac{1}{N} (\hvec_j -\frac{1}{N}\sum_{l=1}^N \hvec_l
         ) + O(\frac{1}{\epsilon}),
\end{align*}
and using the gain approximation formula~\cref{eq:gain-linear-form},
\[
{\sf K}_i =\sum_{j=1}^N s_{ij}X^j = \frac{1}{N}\sum_{j=1}^N (\hvec_j -\frac{1}{N}\sum_{l=1}^N \hvec_l)X^j + O(\frac{1}{\epsilon}).
\]

\item The calculations for the kernel formula are entirely analogous.
  In the asymptotic limit as $\epsilon\rightarrow\infty$,
\begin{align*}
\Teps f(x) & = \int f(x) \pr(x)\ud x + O(\frac{1}{\epsilon})\\
\phi_\epsilon(x) & = \epsilon (h(x)-\hat{h}_\pr) + O(1),
\end{align*}
and, using $\theta(x) = x$ to denote the coordinate function and
$\cdot$ to denote function multiplication, the gain
approximation formula \cref{eq:Keps-def} evaluates to
\begin{equation*}
\begin{aligned}
\K_\epsilon(x)&=\frac{1}{2\epsilon}\left[\Teps (\theta \cdot \phieps +
  \epsilon(h-\hat{h}_\pr)) - \Teps(\theta)\, \Teps(\phieps + \epsilon(h-\hat{h}_\pr))\right]  \\
&=\frac{1}{2}T_\epsilon (\theta \cdot \frac{\phieps}{\epsilon} +
h-\hat{h}_\pr) - \frac{1}{2} \Teps(\theta)\, \Teps(\frac{\phieps}{\epsilon} + h-\hat{h}_\pr) +O(\frac{1}{\epsilon})\\
&=T_\epsilon (\theta \cdot h-\hat{h}_\pr) - \Teps(\theta)\,\Teps( h-\hat{h}_\pr)+O(\frac{1}{\epsilon})\\
&=
\int x(h(x)-\hat{h}_\pr) \rho (x)\ud x + O(\frac{1}{\epsilon}).
\end{aligned}
\end{equation*} 

\end{romannum}

\end{document}